\newtheorem{remark}[theorem]{Remark}
\newcommand{\bsa}{\boldsymbol{a}}
\newcommand{\bsgamma}{\boldsymbol{\gamma}}
\newcommand{\bstau}{\boldsymbol{\tau}}
\newcommand{\bsdelta}{\boldsymbol{\delta}}
\newcommand{\bszeta}{\boldsymbol{\zeta}}
\newcommand{\bsc}{\boldsymbol{c}}
\newcommand{\bsk}{\boldsymbol{k}}
\newcommand{\bsl}{\boldsymbol{l}}
\newcommand{\bst}{\boldsymbol{t}}
\newcommand{\bsx}{\boldsymbol{x}}
\newcommand{\bsr}{\boldsymbol{r}}
\newcommand{\bsh}{\boldsymbol{h}}
\newcommand{\bsi}{\boldsymbol{i}}
\newcommand{\bsy}{\boldsymbol{y}}
\newcommand{\bssigma}{\boldsymbol{\sigma}}
\newcommand{\Dcal}{\mathcal{D}}
\newcommand{\Pcal}{\mathcal{P}}
\newcommand{\BB}{\mathcal{B}}
\newcommand{\wal}{{\rm wal}}
\newcommand{\sob}{{\rm sob}}
\newcommand{\soban}{{\rm sob, an}}
\newcommand{\icomp}{\mathtt{i}}
\newcommand{\bszero}{\boldsymbol{0}}
\newcommand{\bsone}{\boldsymbol{1}}
\newcommand{\rd}{\,\mathrm{d}}
\newcommand{\de}{\mathrm{e}}
\newcommand{\NN}{\mathbb{N}}
\newcommand{\LL}{\mathcal{L}}
\newcommand{\pp}{\mathfrak{p}}
\newcommand{\qq}{\mathfrak{q}}
\newcommand{\rr}{\mathfrak{r}}
\newcommand{\Vol}{{\rm Vol}}
\newcommand{\Landau}{\mathcal{O}}
\newcommand{\integer}{\mathbb{Z}}
\newcommand{\real}{\mathbb{R}}
\newcommand{\FF}{\mathbb{F}}
\newcommand{\cS}{\mathcal{S}}
\newcommand{\EE}{\mathcal{E}}
\begin{document}

\title{Walsh spaces containing smooth functions and quasi-Monte Carlo rules of arbitrary high order}

\author{Josef Dick\thanks{School of Mathematics and Statistics, University of New
South Wales, Sydney 2052, Australia. ({\tt josef.dick@unsw.edu.au})}}

\date{}

\maketitle

\begin{abstract}
We define a Walsh space which contains all functions whose partial
mixed derivatives up to order $\delta \ge 1$ exist and have finite
variation. In particular, for a suitable choice of parameters, this
implies that certain Sobolev spaces are contained in these Walsh
spaces. For this Walsh space we then show that quasi-Monte Carlo
rules based on digital $(t,\alpha,s)$-sequences achieve the optimal
rate of convergence of the worst-case error for numerical
integration. This rate of convergence is also optimal for the
subspace of smooth functions. Explicit constructions of digital
$(t,\alpha,s)$-sequences are given hence providing explicit
quasi-Monte Carlo rules which achieve the optimal rate of
convergence of the integration error for arbitrarily smooth
functions.
\end{abstract}

\begin{keywords}
Numerical integration, quasi-Monte Carlo, digital nets and sequences, Walsh functions
\end{keywords}

\begin{AMS}
primary: 11K38, 11K45, 65C05; secondary: 42C10;
\end{AMS}

\pagestyle{myheadings}
\thispagestyle{plain}
\markboth{Josef DICK}{Explicit constructions of quasi-Monte Carlo rules achieving arbitrary high convergence}

\section{Introduction}
Quasi-Monte Carlo rules are quadrature rules which aim to
approximate an integral $\int_{[0,1]^s} f(\bsx)\rd\bsx$ by the
average of the $N$ function values $f(\bsx_n)$ at the quadrature
points $\bsx_0,\ldots,\bsx_{N-1} \in [0,1]^s$ (and hence are equal
weight quadrature rules). The dimension $s$ can be arbitrarily
large. The task here is to find ways of how to choose those
quadrature points in order to obtain a fast convergence of the
approximation to the integral. Explicit constructions of quadrature
points in arbitrary high dimensions are until now available for the
following two cases:
\begin{enumerate}
\item for sufficiently smooth periodic functions arbitrary high convergence
can be achieved using Kronecker sequences \cite[Theorem~5.3]{nie78} or
a modification of digital nets recently introduced in \cite{Dick05};
\item a convergence of $\Landau(N^{-1} (\log N)^{s-1})$ can be achieved
for functions of bounded variation (in this case the functions are
not required to be periodic).
\end{enumerate}
For non-periodic functions no explicit constructions have been
established which can fully exploit the smoothness of the integrand.
This paper provides a complete solution to this problem.

Among other things we show that an explicit construction of suitable
point sets and sequences can be obtained in the following way: let
$d \ge 1$ be an integer and let $\bsx_0,\bsx_1,\ldots \in
[0,1)^{ds}$ be the points of a digital $(t,m,ds)$-net or digital
$(t,ds)$-sequence over a finite field $\FF_q$ in dimension $d s$
(see \cite{niesiam} for the definition of digital nets and sequences
and see for example \cite{faure,niesiam,nierev,NX,sob67} for
explicit constructions of suitable digital nets and sequences). Let
$\bsx_n = (x_{n,1},\ldots, x_{n,ds})$ with $x_{n,j} = x_{n,j,1}
q^{-1} + x_{n,j,2} q^{-2} + \cdots$ and $x_{n,j,i} \in \{0,\ldots,
q-1\}$ (i.e. $x_{n,j,i}$ are the digits in the base $q$
representation of $x_{n,j}$). Then for $n \ge 0$ we define $\bsy_n =
(y_{n,1},\ldots, y_{n,s})$ with
$$y_{n,j} = \sum_{i=1}^\infty \sum_{k=1}^d x_{n,(j-1) d + k,i} q^{-k
- (i-1) d} \quad\mbox{for } j = 1,\ldots, s.$$ (Note that the
addition here is carried out in $\real$ and that the sum over $i$
above is often finite as $x_{n,j,i} = 0$ for $i$ large enough.) We
point out here that the quality of the point set or sequence is
directly related to the $t$-value of the underlying $(t,m,ds)$-net
or $(t,ds)$-sequence, see Theorem~\ref{th_talphabeta} and
Theorem~\ref{th_talphabetaseq}.

Corollary~\ref{cor_errorbounddignet} now shows that quasi-Monte
Carlo rules using the points $\bsy_0,\ldots, \bsy_{N-1}$ (with $N =
q^m$ for some $m \ge 1$) achieve the optimal rate of convergence of
the integration error of $\Landau(N^{-\vartheta} (\log N)^{\vartheta
s})$ for functions which have partial mixed derivatives up to order
$\vartheta$ which are square integrable as long as $1 \le \vartheta
\le d$ (Corollary~\ref{cor_errorbounddignet} is actually more
general). If $\vartheta > d$ no improvement of the convergence rate
is obtained compared to functions with smoothness $\vartheta = d$,
i.e. we obtain a convergence of $\Landau(N^{-d} (\log N)^{d s})$.
Similar, but less general results for periodic functions compared to
those in this paper have been shown in \cite{Dick05} by a different
proof method. (The construction above is an example of a
construction method which can be used. In Section~\ref{sectdignets}
we outline the general algebraical properties required for the
construction of suitable point sets.)

The quasi-Monte Carlo algorithm based on digital nets and sequences
proposed here has also some further useful properties. For example
our results also hold if one randomizes the point set by, say, a
random digital shift (see for example \cite{DP05,DP05b,matou}).
(This follows easily because the worst-case error (see
Section~\ref{sect_int}) is invariant with respect to digital shifts
in the Walsh space and hence we obtain the same upper bounds for
randomized digital nets and sequences.) In summary the quadrature
rules have the following properties:
\begin{itemize}
\item The quadrature rules introduced in this paper are equal weight quadrature rules which achieve the optimal rate of convergence up to some $\log N$ factors and the result holds for deterministic and randomly digitally shifted quadrature rules.
\item The construction of the underlying point set is explicit and suitable point sets are available in arbitrary high dimensions and arbitrary high number of points.
\item The quadrature rules automatically adjust themselves to the optimal rate of convergence $\Landau(N^{-\vartheta} (\log N)^{s\vartheta})$ as long as $1 \le \vartheta \le d$.
\item The underlying point set is extensible in the dimension as well as in the number of points, i.e., one can always add some coordinates or points to an existing point set such that the quality of the point set is preserved.
\end{itemize}

In the following we lay out some of the underlying principles used
in this work which stem from the behaviour of the Walsh coefficients
of smooth functions. Walsh functions are piecewise constant wavelets
which form an orthonormal set of $\LL_2([0,1]^s)$. In their simplest
form, for a non-negative integer $k$ with base $2$ representation $k
= \kappa_0 + \cdots + \kappa_{m-1} 2^{m-1}$ and an $x \in [0,1)$
with base $2$ representation $x = x_1 2^{-1} + x_2 2^{-2} + \cdots$,
the $k$-th Walsh function in base $2$ is given by $$\wal_k(x) =
(-1)^{\kappa_0 x_1 + \cdots + \kappa_{m-1} x_m}.$$ (Later on we will
use the more general definition of Walsh functions over groups.)

The behaviour of the Fourier coefficients of smooth periodic
functions is well known, i.e. the smoother the function the faster
the Fourier coefficients go to zero (see for example \cite{zyg}). An
analogous result for Walsh functions has, to the best of the authors
knowledge, not been known until now (see Fine~\cite{Fine} who, for
example, shows that the only absolute continuous functions whose
$k$-th Walsh coefficients decay faster than $1/k$ are constant
functions). This will be established here and subsequently be
exploited to obtain quasi-Monte Carlo rules with arbitrary high
order of convergence.

To give a glimpse of how the Walsh coefficients of smooth functions
behave, consider for example the Walsh series for $1/2-x$:
$$1/2-x = \sum_{k=0}^\infty c_k \wal_k(x) = \sum_{a=0}^\infty 2^{-a-2} \wal_{2^a}(x).$$ Although the
function is infinitely smooth, in general the decay of the Walsh
coefficient is only of order $1/k$. But note that most of the Walsh
coefficients are actually $0$. For example when we consider
$(1/2-x)^2$, then typically we would have that the Walsh coefficient
of $k = 2^a$ is of order $2^{-a}$, the Walsh coefficient of $k =
2^{a_1} + 2^{a_2}$ ($a_1 > a_2$) is of order $2^{-a_1-a_2}$ and for
$k = 2^{a_1} + 2^{a_2} + \cdots + 2^{a_v}$ with $a_1 > \cdots > a_v$
and $v > 2$ the $k$-th Walsh coefficient would be $0$.  By
considering $(1/2-x)^3, (1/2-x)^4, \ldots$, or more generally
polynomials, one can now realize that the speed of convergence of
the Walsh coefficients depends on how many non-zero digits $k$ has.
This is the basic feature which we will relate to the speed of
convergence of the Walsh coefficients for smooth functions.

Subsequently we will explicitly state and use the behaviour of the
Walsh coefficients of smooth functions. In general the Walsh
functions depend on the base $q$ digit expansion of the wavenumber
$k$ and also of the point $x$ where the Walsh function is to be
evaluated. Hence, maybe not surprisingly, the value of the $k$-th
Walsh coefficients of smooth functions also depend on the $q$-adic
expansion of $k$. We show that the Walsh
space~$\EE_{s,q,\vartheta,\bsgamma}$ introduced in
Section~\ref{sectwalshspace} contains all functions whose partial
mixed derivatives up to order $\delta < \vartheta$ exist and have
finite variation, where $\vartheta$ is a parameter restricting the
behaviour of the Walsh coefficients of the function space
$\EE_{s,q,\vartheta,\bsgamma}$. (We use a similar, though much more
general, technique as Fine~\cite{Fine} used for showing that the
Walsh coefficients of a differentiable function cannot decay faster
than $1/k$.)

The concept of digital $(t,\alpha,\beta,m,s)$-nets and digital
$(t,\alpha,\beta,s)$-sequences (see Section~\ref{sectdignets} and
also \cite{Dick05} for a similar concept) is now designed to yield
point sets which work well for the Walsh
space~$\EE_{s,q,\vartheta,\bsgamma}$, just in the same way as the
digital nets and sequences from \cite{faure,nie86,niesiam,NX,sob67}
are designed to work well for the spaces for example considered in
\cite{DP05,HHY} (or as lattice rules are designed to work well for
periodic Korobov spaces). Here the power of the result that the
Walsh space~$\EE_{s,q,\vartheta,\bsgamma}$ contains smooth functions
comes into play: it follows that we can fully exploit the smoothness
of an integrand using digital $(t,\alpha,\alpha,m,s)$-nets or
digital $(t,\alpha,\alpha,s)$-sequences. As the construction of the
points $\bsy_0,\bsy_1,\ldots$ introduced at the beginning yields
explicit examples of digital $(t,\alpha,\alpha,m,s)$-nets or digital
$(t,\alpha,\alpha,s)$-sequences as shown in
Section~\ref{sectdignets} we therefore obtain explicit constructions
of quasi-Monte Carlo rules which can achieve the optimal order of
convergence for arbitrary smooth functions.

In the next section we introduce Walsh functions over groups and
state some of their essential properties.

\section{Walsh functions over groups}

In this section we give the definition of Walsh functions over
groups and present some essential properties. Walsh functions in
base $2$ were first introduced by Walsh~\cite{walsh}, though a
similar but non-complete set of functions has already been studied
by Rademacher~\cite{rade}. Further important results were obtained
in \cite{Fine}. We follow \cite{PDP} in our presentation.

\subsection{Definition of Walsh functions over groups}

 An essential tool for the investigation of digital nets are Walsh functions.
A very general definition, corresponding to the most general
construction of digital nets over finite rings, was given in
\cite{LNS}. There, Walsh functions over a finite abelian group $G$,
using some bijection $\varphi$, were defined.   Here we restrict
ourselves to the additive groups  of the finite fields $\FF_{p^r}$,
$p$ prime and $r \ge 1$. We restate the definitions for this special
case here for the sake of convenience. In the following let $\NN$
denote the set of positive integers and $\NN_0$ the set of
non-negative integers.

\begin{definition}[Walsh functions]\label{defwalsh}\rm
Let $q=p^r$, $p$ prime, $r\in \NN$ and let $\FF_q$ be the finite
field with $q$ elements. Let
$\integer_q=\{0,1,\ldots,q-1\}\subset\integer$ and let
$\varphi:\integer_q \longrightarrow \FF_q$ be a bijection such that
$\varphi(0)=0$, the neutral element of addition in $\FF_q$. Moreover
denote by $\psi$ the
 canonical isomorphism (described below) of additive groups $\psi:\FF_q \longrightarrow
\integer_p^r$ and define $\eta:=\psi \circ \varphi$. For $1 \le i
\le r$ denote by $\pi_i$ the projection
$\pi_i:\integer_p^r\longrightarrow \integer_p$,
$\pi_i(x_1,\ldots,x_r)=x_i$.
\[ \xymatrix{ \integer_{q} \ar[r]^{\varphi} \ar[dr]_{\eta} & \FF_{q}
\ar[d]^{\psi} \\ & \integer_p^{r} \ar[r]^{\pi_i} & \integer_p } \]
Let now $k \in \NN_0$ with base $q$ representation
$k=\kappa_0+\kappa_1 q+\cdots +\kappa_{m-1} q^{m-1}$ where $\kappa_l
\in \integer_q$ and let $x \in [0,1)$ with base $q$ representation
$x=x_1/q+x_2/q^2+\cdots$ (unique in the sense that infinitely many
$x_l$ must be different from $q-1$). Then the $k$-th Walsh function
over the additive group of the finite field $\FF_q$ with respect to
the bijection $\varphi$ is defined by
$$_{\FF_q,\varphi}\wal_k(x) = \exp\left(\frac{2\pi \icomp}{p} \sum_{l=0}^{m-1}\sum_{i=1}^r (\pi_i \circ \eta)(\kappa_l)(\pi_i \circ
\eta)(x_{l+1})\right).$$ For convenience we will in the rest of the
paper omit the subscript and simply write ${\wal}_k$ if there is no
ambiguity.

Multivariate Walsh functions are defined by multiplication of the univariate
components, i.e., for $s >1$, $\bsx=(x_1,\ldots,x_s)\in [0,1)^s$ and
$\bsk=(k_1,\ldots,k_s) \in\NN^s_0$, we set \[
\wal_{\bsk}(\bsx) = \prod_{j=1}^s \wal_{k_j}(x_j) .\]
\end{definition}

We now briefly describe the canonical isomorphism. Let
$\FF_q=\integer_p[\theta]$, such that
$\{1,\theta,\ldots,\theta^{r-1}\}$ is a basis of $\FF_q$ over
$\integer_p$ as a vector space. Then the isomorphism $\psi$ between
$\FF_q$ and $\integer_p^r$ shall be given by
\[ \psi(x)=(x_1,\ldots,x_r)^\top,\text{ for }x=\sum_{i=1}^r x_i\theta^{i-1},
   x_i \in \integer_p.
\] For more information on the Walsh functions defined above see \cite{PDP}.

We summarize some important properties of Walsh functions over the
additive group of a finite field which will be used throughout the
paper. The proofs of the subsequent results can be found e.g.\ in
\cite{larpir,pirsic} (see also \cite{chrest}). In the following we call $x \in [0,1)$ a
$q$-adic rational if $x$ can be represented by a finite base $q$
expansion.

\begin{proposition} \label{walshprops}
Let $p$, $q$, $\FF_q$ and $\varphi$ be as in Definition
\ref{defwalsh}. For $x,y$ with $q$-adic representations
$x=\sum_{i=w}^{\infty}{x_i}{q^{-i}}$ and
$y=\sum_{i=w}^{\infty}{y_i}{q^{-i}}$, $w\in\integer$ (taking $w$
negative, hence the following operations are also defined for
integers), define $x \oplus_{\varphi}
y:=\sum_{i=w}^{\infty}{z_i}{q^{-i}}$ where
$z_i:=\varphi^{-1}(\varphi(x_i)+\varphi(y_i))$ and
$\ominus_{\varphi}x:=\sum_{i=w}^{\infty}{v_i}{q^{-i}}$ where
$v_i:=\varphi^{-1}(-\varphi(x_i))$. Further we set $x\ominus_\varphi
y:= x\oplus_\varphi(\ominus_\varphi y)$. For vectors $\bsx, \bsy$ we
define the operations component-wise. Then we have:
\begin{enumerate}
\item
For all $k,l \in \NN_0$ and all $x,y \in [0,1)$, with the
restriction that if $x, y$ are not $q$-adic rationals then $x
\oplus_{\varphi} y$ is not allowed to be a $q$-adic rational, we
have
$$\wal_k(x) \cdot \wal_l(x) = \wal_{k \oplus_{\varphi} l}(x), \; \; \;
\wal_k(x) \cdot \wal_k(y) = \wal_k(x \oplus_{\varphi} y)$$ and, with
the restriction that if $x, y$ are not $q$-adic rationals then $x
\ominus_{\varphi} y$ is not allowed to be a $q$-adic rational,
$$\wal_k(x) \cdot \overline{\wal_l(x)} = \wal_{k \ominus_{\varphi}
l}(x), \; \; \; \wal_{k}(x) \cdot \overline{\wal_{k}(y)} =
\wal_k(x\ominus_{\varphi} y). $$
\item We have
 $$\sum_{k=0}^{q-1} \wal_{l}(k/q) = \begin{cases}0&\text{if } l\neq 0,\\
                  q&\text{if } l=0. \end{cases} $$
\item We have $$\int_0^1\wal_0(x)\rd x=1 \;\;\; \text{ and } \;\;\;\int_0^1\wal_k(x)\rd x=0 \text{ if } k>0.$$
\item For all $\bsk, \bsl \in \NN_0^s$ we have the following orthogonality properties: $$\int_{[0,1)^s} \wal_{\bsk}(\bsx) \overline{\wal_{\bsl}(\bsx)}\rd \bsx =
\begin{cases}
1 & \text{ if } \bsk=\bsl, \\
0 & \text{ otherwise}.
\end{cases} $$
\item For any $f \in \LL_2([0,1)^s)$ and any $\bssigma \in [0,1)^s$ we have
$$\int_{[0,1)^s}f(\bsx)\rd \bsx = \int_{[0,1)^s}f(\bsx \oplus_{\varphi}
\bssigma)\rd \bsx.$$ \item For any integer $s \ge 1$ the system $\{\wal_{\bsk}:
\bsk \in \NN_0^s\}$ is a complete orthonormal system in $\LL_2([0,1)^s)$.
\end{enumerate}
\end{proposition}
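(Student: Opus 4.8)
My plan is to isolate a single structural observation about the bijection $\varphi$ and then harvest all six parts from it, using in addition one character-sum evaluation and one density argument. The observation is this: since $\psi$ is an isomorphism of additive groups and $\varphi(0)=0$, the composite $\eta=\psi\circ\varphi$ turns the digitwise operations into ordinary arithmetic in $\integer_p^r$, i.e.\ for any digits $a,b\in\integer_q$ one has $\eta(\varphi^{-1}(\varphi(a)+\varphi(b)))=\eta(a)+\eta(b)$ and $\eta(\varphi^{-1}(-\varphi(a)))=-\eta(a)$, read componentwise modulo $p$. Equivalently, each coordinate $\pi_i\circ\eta:\integer_q\to\integer_p$ carries $\oplus_\varphi$ to $+$ and $\ominus_\varphi$ to $-$. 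Combined with the facts that $t\mapsto\exp(2\pi\icomp t/p)$ is a homomorphism of $(\integer_p,+)$ and that $\eta$ is a bijection of $\integer_q$ onto $\integer_p^r$, this is essentially all the input needed.

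For (1) I would fix $k$, note that $\wal_k(x)$ depends only on the digits $x_1,\dots,x_m$ of $x$, where $q^{m-1}\le k<q^m$, expand the exponent in Definition~\ref{defwalsh}, and apply the observation digit by digit; the identities $\wal_k(x)\wal_l(x)=\wal_{k\oplus_\varphi l}(x)$ and $\wal_k(x)\wal_k(y)=\wal_k(x\oplus_\varphi y)$ then drop out. The hypothesis on $q$-adic rationals is exactly what guarantees that the digitwise sum $z_i=\varphi^{-1}(\varphi(x_i)+\varphi(y_i))$ is the canonical base-$q$ expansion of $x\oplus_\varphi y$ (no trailing string of digits $q-1$), so that $\wal_k$ may be read off from its digits. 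The $\ominus_\varphi$ versions follow since $\overline{\exp(2\pi\icomp t/p)}=\exp(2\pi\icomp(-t)/p)$ gives $\overline{\wal_l(x)}=\wal_{\ominus_\varphi l}(x)$ for every $x$, with no restriction needed because $k\ominus_\varphi l$ again has a finite expansion.

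For (2), the only nonzero digit of $k/q$ is $k$ itself, in position $1$, so $\wal_l(k/q)=\exp\!\big(\tfrac{2\pi\icomp}{p}\,\langle\eta(\lambda_0),\eta(k)\rangle\big)$ with $\lambda_0$ the lowest digit of $l$; summing over $k=0,\dots,q-1$ then amounts to summing a character of $\integer_p^r$ over the whole group, which is $q$ if $\lambda_0=0$ and $0$ otherwise (and $\lambda_0=l$ in the range $0\le l<q$ relevant to (2)). Statement (3) follows from (2) after subdividing $[0,1)$ into $q$-adic intervals; alternatively, if $q^a\le k<q^{a+1}$ then integrating $\wal_k$ over its $(a{+}1)$st digit alone already yields $0$, since the leading digit $\kappa_a\ne0$ makes the inner character sum vanish. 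For (4) I would reduce to $s=1$ by the product definition, use (1) to rewrite $\wal_k(x)\overline{\wal_l(x)}=\wal_{k\ominus_\varphi l}(x)$, integrate by (3), and note that $k\ominus_\varphi l=0$ forces $\varphi(k_i)=\varphi(l_i)$, hence $k_i=l_i$ for all $i$, hence $k=l$.

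Finally, orthonormality of $\{\wal_{\bsk}\}$ is (4), and for completeness (6) it suffices to show the linear span contains every function that is constant on the $q$-adic boxes of some fixed level $m$, since such functions are dense in $\LL_2([0,1)^s)$; by the product structure this reduces to invertibility of the $q^m\times q^m$ matrix $(\wal_k(a/q^m))_{0\le k,a<q^m}$, which holds because the level-$m$ analogue of (2) makes its rows orthogonal (the matrix is $q^{m/2}$ times a unitary one). For (5) I would verify the identity on $f=\wal_{\bsk}$: by (1), $\int_{[0,1)^s}\wal_{\bsk}(\bsx\oplus_\varphi\bssigma)\rd\bsx=\wal_{\bsk}(\bssigma)\int_{[0,1)^s}\wal_{\bsk}(\bsx)\rd\bsx$, which equals $\int_{[0,1)^s}\wal_{\bsk}(\bsx)\rd\bsx$ by (3), and then extend to general $f\in\LL_2$ by linearity and the completeness just established, the set of $\bsx$ violating the restriction in (1) having Lebesgue measure zero. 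The only genuinely fiddly step is the digit bookkeeping in (1) — keeping track of the exceptional $q$-adic set and checking that no carries intrude — after which (2)--(6) reduce to the short chain above.
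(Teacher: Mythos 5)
The paper does not prove Proposition~\ref{walshprops} at all --- it delegates to the cited references (Larcher--Pirsic, Pirsic's thesis, Chrestenson) --- so there is no internal proof to compare against. Your sketch is the standard character-theoretic argument one finds in those sources, and it is correct: the single observation that $\eta=\psi\circ\varphi$ carries the digitwise operations $\oplus_\varphi$, $\ominus_\varphi$ to addition and negation in $\integer_p^r$ (because $\psi$ is an additive isomorphism and $\varphi$ cancels against $\varphi^{-1}$), combined with distributivity in $\integer_p$ and the bijectivity of $\eta$, does indeed yield (1) and (2); (3), (4) and (6) then follow by exactly the reductions you describe, and your handling of the $q$-adic-rational restriction in (1) --- it is there precisely to guarantee that the digit string $(z_i)$ is the \emph{canonical} expansion of $x\oplus_\varphi y$, with no tail of digits $q-1$ --- is the right one. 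Two minor remarks. First, in (5) the passage from $f=\wal_{\bsk}$ to general $f\in\LL_2([0,1)^s)$ by density tacitly uses that $f\mapsto\int_{[0,1)^s}f(\bsx\oplus_\varphi\bssigma)\rd\bsx$ is a bounded functional on $\LL_2$, i.e.\ that the shift $\bsx\mapsto\bsx\oplus_\varphi\bssigma$ is measure-preserving; this is immediate (it permutes the $q$-adic boxes of every fixed level, up to a null set), but once you have it, (5) holds for all $f\in\LL_1$ with no reference to Walsh functions, which is the cleaner route. Second, your reading of (2) is the correct one: as literally stated the identity requires $0\le l<q$ (equivalently, $l$ should be replaced by its lowest base-$q$ digit), since for instance $l=q$ gives $\wal_l(k/q)=1$ for all $k$ and the sum equals $q$; this is how the paper uses (2) in the proof of Lemma~\ref{lem_intJ}. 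Neither point affects the correctness of your argument.
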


\begin{remark}\rm
The restrictions in item $1.$ was added to exclude cases like: $x =
(0.010101\ldots)_2$, $y = (0.0010101\ldots)_2$ and $x \oplus y =
(0.1)_2$, for which the result is of course not true. On the other
hand, the result holds for $x \oplus y = (0.0111111\ldots)_2$.
\end{remark}

Throughout the paper we will use a fixed bijection $\varphi$ and a
fixed finite field $\FF_q$ is used for Walsh functions and
$\oplus_{\varphi}$ and $\ominus_{\varphi}$. Hence we will often
write $\oplus$ and $\ominus$ instead of $\oplus_{\varphi}$ and
$\ominus_{\varphi}$.

In the following section we will deal with Walsh series and Walsh
coefficients, which we briefly describe in the following: functions
$f \in \LL_2([0,1)^s)$ have an associated Walsh series
$$f(\bsx) \sim \sum_{\bsk \in \NN_0^s} \hat{f}(\bsk)
\wal_{\bsk}(\bsx),$$ where the Walsh coefficients $\hat{f}(\bsk)$
are given by $$\hat{f}(\bsk) = \int_{[0,1)^s} f(\bsx)
\wal_{\bsk}(\bsx) \rd \bsx.$$ For smooth functions the Walsh series
converges to the function, which is shown in
Section~\ref{subsect_convergence}.

\section{Walsh spaces containing smooth functions}\label{sectwalshspace}

In the following we investigate how the Walsh coefficients of smooth
functions decay and subsequently we use this to define function
classes based on Walsh functions which contain smooth functions. But
first we introduce a suitable variation.

\subsection{A generalized weighted Hardy and Krause variation}

In the following we generalize the Hardy and Krause variation which
suits our purposes later on.

\subsubsection{H\"older condition}

A function $f:[0,1) \rightarrow \real$ satisfies a H\"older
condition with coefficient $0 < \lambda \le 1$ if there is a
constant $C_f >0$ such that $$|f(x) - f(y)| \le C_f  |x-y|^\lambda
\quad \mbox{for all } x,y \in [0,1).$$ The right hand side of the
above inequality forms a metric on $[0,1)$. When one considers the
higher dimensional domain $[0,1)^s$ then $|x-y|$ is changed to some
other metric on $[0,1)^s$. Here we consider tensor product spaces
and we generalize the H\"older condition to higher dimensions in a
way which is suitable for tensor product spaces in our context.
Consider for example the function $f(\bsx) = \prod_{j=1}^s
f_j(x_j)$, where $\bsx = (x_1,\ldots, x_s)$ and each
$f_j:[0,1)\rightarrow \real$ satisfies a H\"older condition with
coefficient $0<\lambda \le 1$. Then it follows that for all
$\emptyset \neq u \subseteq \cS := \{1,\ldots, s\}$ we have
\begin{equation}\label{eq_prodholder}
\prod_{j \in u} |f_j(x_j) - f_j(y_j)| \le \prod_{j\in u} C_{f_j} \prod_{j \in u} |x_j - y_j|^\lambda
\end{equation}
for all $x_j,y_j \in [0,1)$ with $j \in u$.
But here $\prod_{j=1}^s |x_j - y_j|$ is not a metric on $[0,1)^s$.

Note that we have
\begin{equation}\label{eq_sumprod}
\prod_{j \in u} |f_j(x_j) - f_j(y_j)| = \left|\sum_{v \subseteq u} (-1)^{|v|-|u|} \prod_{j\in v}  f_j(x_j) \prod_{j\in u\setminus v} f_j(y_j)\right|,
\end{equation}
which can be described in words in the following way: for given
$\emptyset \neq u \subseteq \cS$ let $x_j, y_j \in [0,1)$ with $x_j
\neq y_j$ for all $j \in u$; consider the box $J$ with vertices
$\{(a_j)_{j\in u}: a_j = x_j \mbox{ or } a_j = y_j \mbox{ for } j
\in u\}$. Then (\ref{eq_sumprod}) is the alternating sum of the
function $\prod_{j\in u} f_j$ at the vertices of $J$ where adjacent
vertices have opposite signs. This sum can also be defined for
functions on $[0,1)^s$ which are not of product form.

Indeed, let for a subinterval $J = \prod_{j=1}^s [x_j, y_j)$ with $0 \le x_j < y_j \le 1$ and a function $f:[0,1)^s \rightarrow \real$ the function $\Delta(f,J)$ denote the alternating sum of $f$ at the vertices of $J$ where adjacent vertices have opposite signs. (Hence for $f = \prod_{j=1}^s f_j$ we have $\Delta(f,J) = \prod_{j=1}^s (f_j(x_j) - f_j(y_j))$.)

\subsubsection{Generalized Vitali variation}

Let $\pp \ge 1$. Then we define the generalized variation in the
sense of Vitali with coefficient $0 < \lambda \le 1$  by
\begin{equation}\label{fracVitalivar} V^{(s)}_{\lambda, \pp}(f) = \sup_{{\Pcal}} \left(\sum_{J \in
\Pcal} \Vol(J)
\left|\frac{\Delta(f,J)}{\Vol(J)^{\lambda}}\right|^{\pp}\right)^{1/\pp},\end{equation}
where the supremum is extended over all partitions $\Pcal$ of
$[0,1]^s$ into subintervals and $\Vol(J)$ denotes the volume of the
subinterval $J$.

Note that for $\lambda = 1$ and $\pp = 1$ one obtains the usual
definition of the Vitali variation, see for example \cite{niesiam}.
If we take $\pp = \infty$, then we obtain a condition of the form
(\ref{eq_prodholder}) where $u = \cS$ and where we can take the
constant $\prod_{j=1}^s C_{f_j} = V^{(s)}_{\lambda,\infty}(f)$. For
$s = 1$ and $\pp = \infty$ we obtain a H\"older condition with
coefficient $0 < \lambda \le 1$. In this sense we can view
(\ref{fracVitalivar}) as a fractional Vitali variation of order
$\lambda$.

For $\lambda = 1$ and if the partial derivatives of $f$ are continuous on $[0,1]^s$ we also have the formula
\begin{equation}\label{eq_formelV}
V_{1,\pp}^{(s)}(f) = \left(\int_{[0,1]^s} \left|\frac{\partial^s f}{\partial x_1\cdots \partial x_s} \right|^{\pp} \rd \bsx\right)^{1/\pp},
\end{equation}
for all $\pp \ge 1$. Indeed we have $$|\Delta(f,J)| = \left|\int_J\frac{\partial^s f}{\partial x_1\cdots \partial x_s}(\bsx) \rd \bsx \right| = \Vol(J) \left|\frac{\partial^s f}{\partial x_1\cdots \partial x_s}(\bszeta_J)\right|$$ for some $\bszeta_J \in \overline{J}$, which follows by applying the mean value theorem to the inequality
\begin{equation*}
\min_{\bsx \in \overline{J}} \left|\frac{\partial^s f}{\partial x_1\cdots \partial x_s}(\bsx) \right| \le \Vol(J)^{-1}  \left|\int_J\frac{\partial^s f}{\partial x_1\cdots \partial x_s}(\bsx) \rd \bsx \right|  \le   \max_{\bsx\in \overline{J}} \left|\frac{\partial^s f}{\partial x_1\cdots \partial x_s}(\bsx)\right|.
\end{equation*}
Therefore we have $$ \sum_{J \in \Pcal} \Vol(J)
\left|\frac{\Delta(f,J)}{\Vol(J)}\right|^{\pp} = \sum_{J\in\Pcal}
\Vol(J) \left|\frac{\partial^s f}{\partial x_1\cdots \partial
x_s}(\bszeta_J)\right|^\pp,$$ which is just a Riemann sum for the
integral $\int_{[0,1]^s} \left|\frac{\partial^s f}{\partial
x_1\cdots \partial x_s} \right|^{\pp} \rd \bsx$ and thus the
equality follows.

Using H\"older's inequality and the fact that $\left(\sum_{J \in
\Pcal} (\Vol(J)^{1-1/\pp})^{\pp/(\pp-1)}\right)^{1-1/\pp} =
\left(\sum_{J\in \Pcal} \Vol(J)\right)^{1-1/\pp} = 1$ it follows
that $$V_{\lambda,1}^{(s)}(f) \le V_{\lambda,\pp}^{(s)}(f) \quad
\mbox{for all } \pp \ge 1.$$

\subsubsection{Generalized Hardy and Krause variation}

Until now we did not take projections to lower dimensional faces
into account (in (\ref{eq_prodholder}) we did take projections into
account as we considered all $\emptyset \neq u \subseteq\cS$).

For $\emptyset \neq u \subseteq\cS$, let $V_{\lambda,
\pp}^{(|u|)}(f_u;u)$ be the generalized Vitali variation with
coefficient $0 < \lambda \le 1$ of the $|u|$-dimensional function
$f_u(\bsx_u) = \int_{[0,1)^{s-|u|}} f(\bsx) \rd \bsx_{\cS\setminus
u}$. For $u  = \emptyset$ we have $f_\emptyset = \int_{[0,1)^{s}}
f(\bsx) \rd \bsx_{\cS}$ and we define $V_{\lambda,
\pp}^{(|\emptyset|)}(f_\emptyset;\emptyset) = |f_\emptyset|$. Let
$\qq \ge 1$, then
\begin{equation}\label{eq_varhk}
V_{\lambda, \pp, \qq}(f) = \left(\sum_{u \subseteq\cS} \left(V^{(|u|)}_{\lambda, \pp}(f_u;u) \right)^{\qq}\right)^{1/\qq}
\end{equation}
is called the generalized Hardy and Krause variation of $f$ on
$[0,1]^s$.

For $\lambda = \pp = \qq = 1$ one obtains an unanchored version of
the usual definition of the Hardy and Krause variation, see
\cite{niesiam}. A function $f$ for which $V_{\lambda, \pp, \qq}(f) <
\infty$ is said to be of finite variation with coefficient
$\lambda$. (We remark that in some cases it might be appropriate to
leave out the term corresponding to $u = \emptyset$ in
(\ref{eq_varhk}), but here this term will be needed later on and
hence we include it already in the definition of the variation.)

\subsubsection{Generalized weighted Hardy and Krause variation}

As first suggested in \cite{SW98} (see also \cite{DSWW1}) different
coordinates might have different importance, hence we can also
define a weighted variation. In the spirit of the weighted Sobolev
spaces in \cite{DSWW1}, let $\bsgamma = (\gamma_u)_{u \subset \NN}$
be an indexed set of non-negative real numbers. Then we define the
weighted variation $V_{\lambda,\pp,\qq,\bsgamma}(f)$ of $f$ with
coefficient $0 < \lambda \le 1$ by \begin{equation*} V_{\lambda,\pp,
\qq,\bsgamma}(f) = \left(\sum_{u \subseteq\cS} \gamma_u^{-1}
\left(V^{(|u|)}_{\lambda, \pp}(f_u;u) \right)^{\qq}\right)^{1/\qq}.
\end{equation*}

Note that for $\lambda = 1$ and $\pp = \qq = 2$ the weighted
variation $V_{\lambda,\pp,\qq,\bsgamma}(f)$ coincides with the norm
in a weighted unanchored Sobolev space for any function in this
Sobolev space, i.e, we have the identity $V_{1,2,2,\bsgamma}(f)
=\|f\|_{\sob}$, where
\begin{equation*}
\|f\|_{\sob} = \left(\sum_{u\subseteq \cS} \gamma_u^{-1} \int_{[0,1)^{|u|}} \left|\int_{[0,1)^{s-|u|}} \frac{\partial^{|u|} f(\bsx)}{\partial \bsx_u} \rd\bsx_{\cS\setminus u} \right|^2 \rd \bsx_u\right)^{1/2}
\end{equation*}
denotes the norm in the weighted Sobolev space (see \cite{DSWW1} for more information on this Sobolev space).

\subsection{The decay of the Walsh coefficients of smooth functions}

We are now ready to show how the Walsh coefficients of smooth
functions decay. This behaviour is essentially captured in
Definition~\ref{def_qadicnonincreasing} below.  But before we get
there we need several lemmas to prove the result. The following
lemma is needed to show how the Walsh coefficients of functions with
bounded variation decay. A simpler version of it was shown in
\cite[Lemma~4]{pirsic}.
\begin{lemma}\label{lem_pirs}
Let $f \in \LL_1([0,1)^s)$ and let $\bsk = (k_1,\ldots, k_s) \in \NN^s$ with $k_j = \kappa_j q^{a_j-1} + k'_j$ where $a_j \in \NN$, $\kappa_j \in \{1,\ldots, q-1\}$, $0 \le k'_j < q^{a_j-1}$ and let $0 \le c_j < q^{a_j-1}$ for $j = 1,\ldots, s$. Then
\begin{eqnarray*}
\left|\int_{\prod_{j=1}^s [c_j q^{-a_j+1}, (c_j+1)q^{-a_j+1})}
f(\bsx) \; \overline{\wal_{\bsk}(\bsx)} \rd \bsx \right| & \le &
q^{-\sum_{j=1}^s (a_j -1)} \sup_{J} |\Delta(f,J)|,
\end{eqnarray*}
where the supremum is taken over all boxes of the form $$J =
\prod_{j=1}^s [d_j, e_j) \subseteq \prod_{j=1}^s [c_j q^{-a_j+1},
(c_j+1) q^{-a_j+1})$$ with $q^{a_j}|e_j-d_j| \in \{1,\ldots, q-1\}$.
\end{lemma}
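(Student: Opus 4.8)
The plan is to prove the bound by exploiting the structure of the Walsh function $\overline{\wal_{k_j}}$ on the interval $[c_j q^{-a_j+1}, (c_j+1)q^{-a_j+1})$, namely that it depends only on the $a_j$-th digit of $x_j$ (the leading digit of $k_j$ being $\kappa_j q^{a_j-1}$ forces the contributions of the first $a_j-1$ digits, which are fixed on this interval, to combine into an overall constant of modulus $1$, while digits beyond the $a_j$-th do not contribute because $k_j'<q^{a_j-1}$). Concretely, first I would factor $\overline{\wal_{\bsk}(\bsx)}$ over the coordinates and, for each $j$, write $\overline{\wal_{k_j}(x_j)} = \overline{\wal_{\kappa_j q^{a_j-1}}(x_j)}\cdot\overline{\wal_{k_j'}(x_j)}$ using Proposition~\ref{walshprops}(1). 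On the fixed sub-box indexed by $(c_j)_{j=1}^s$ the factor coming from $k_j'$ is a constant (of modulus one) since $k_j'$ only sees the top $a_j-1$ digits of $x_j$, which are determined by $c_j$; pulling these constants out leaves a product of factors $\overline{\wal_{\kappa_j q^{a_j-1}}(x_j)}$, each of which is a nonconstant character in the single digit $x_{j,a_j}\in\{0,\dots,q-1\}$.

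Next I would subdivide the box $\prod_{j=1}^s[c_jq^{-a_j+1},(c_j+1)q^{-a_j+1})$ into the $q^s$ congruent sub-boxes obtained by fixing the $a_j$-th digit of $x_j$, i.e. boxes of side $q^{-a_j}$ in coordinate $j$. On each such sub-box $\overline{\wal_{\bsk}}$ is constant, equal to $\prod_{j}\overline{\wal_{\kappa_j q^{a_j-1}}}$ evaluated at the relevant digit value times the pulled-out constant. The integral over the big box therefore becomes a $q^s$-term alternating-type sum, with the ``signs'' given by the character values $\prod_{j=1}^s \exp(2\pi\icomp/p\cdot(\cdot))$, of the quantities $\int_{\text{sub-box}} f(\bsx)\rd\bsx$. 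The key combinatorial point is that, because each univariate character $\overline{\wal_{\kappa_j q^{a_j-1}}}$ is a nontrivial character of $\integer_q$ (as $\kappa_j\neq0$), its values sum to zero over the $q$ digit choices by Proposition~\ref{walshprops}(2). Hence for each coordinate we can take a difference of adjacent sub-box integrals: by an Abel-summation / telescoping argument in each coordinate one rewrites the weighted sum so that the integral of $f$ only ever appears through differences across faces, which are exactly the $\Delta(f,J)$ for sub-boxes $J$ of the form in the statement (side $q^{-a_j}$ in each coordinate).

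After this rewriting, each of the $q^s$ terms is a character value (modulus $1$) times some $\Delta(f,J)$ with $J$ of the prescribed form, and the whole thing carries a normalization factor. I would estimate crudely: $|\int_{\text{big box}} f\,\overline{\wal_{\bsk}}| \le \frac{1}{q^s}\cdot(\text{number of terms after telescoping})\cdot\max_{J}|\Delta(f,J)|$, and track the powers of $q$ so that the $q^{-s}$ from the sub-box volumes combines with the counting to give exactly $q^{-\sum_{j=1}^s(a_j-1)}$; the sum over the $a_j-1$ higher digits of each $x_j$ (i.e. over the choices of $c_j$ — wait, $c_j$ is fixed here, so the factor $q^{-(a_j-1)}$ comes instead from normalizing the volume $q^{-a_j+1}$ of the side against the character averaging). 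More precisely, the telescoping in coordinate $j$ replaces a sum of $q$ integrals over sub-boxes of width $q^{-a_j}$ by at most $q-1$ differences, and the $\Delta(f,J)$ over such a $J$ is comparable to (does not exceed) $\sup_J|\Delta(f,J)|$ with $J$ ranging over boxes with $q^{a_j}|e_j-d_j|\in\{1,\dots,q-1\}$, exactly as required; the volume bookkeeping then yields the factor $q^{-\sum_j(a_j-1)}$ rather than $q^{-\sum_j a_j}$.

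I expect the main obstacle to be making the multivariate telescoping precise and getting the constant right: in one dimension the identity ``$\sum_{x=0}^{q-1}\chi(x)g(x) = $ a bounded combination of the differences $g(x+1)-g(x)$'' when $\sum_x\chi(x)=0$ is straightforward Abel summation, but iterating it over $s$ coordinates while keeping the resulting boxes in the allowed family (side length exactly $q^{-a_j}$, i.e. $q^{a_j}|e_j-d_j|\in\{1,\dots,q-1\}$) and ensuring the bound is $q^{-\sum(a_j-1)}\sup_J|\Delta(f,J)|$ with \emph{this} constant $1$ in front requires care. The cleanest route is probably induction on $s$: peel off coordinate $s$, apply the one-dimensional averaging/Abel-summation estimate with the inner function being $\bsx_{\{1,\dots,s-1\}}\mapsto\int f\,\overline{\wal}$ already reduced in the first $s-1$ coordinates, and check that a difference $\Delta(\cdot,\cdot)$ in coordinate $s$ of an $(s-1)$-dimensional $\Delta$ is an $s$-dimensional $\Delta$ of $f$ on a box of the required shape. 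I would also need to handle the edge behaviour of the $q$-adic expansion (the uniqueness convention excluding tails of $q-1$), but this only affects a measure-zero set and does not change the integral bound.
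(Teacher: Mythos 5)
Your overall strategy matches the paper's: factor $\overline{\wal_{k_j}}=\overline{\wal_{\kappa_jq^{a_j-1}}}\;\overline{\wal_{k'_j}}$, pull out the constant factor $\overline{\wal_{k'_j}}$ on the box indexed by $(c_j)_j$, subdivide into the $q^s$ sub-boxes of side $q^{-a_j}$ on which $\overline{\wal_{\kappa_jq^{a_j-1}}}$ is constant, and use $\sum_{r=0}^{q-1}\overline{\wal_{\kappa_j}(r/q)}=0$ to convert the character-weighted sum of the sub-box integrals $a_{(r_1,\ldots,r_s)}$ into mixed differences, which are then integrals of $\Delta(f,J_{\bsx})$ over sub-boxes. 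All of that is correct and is exactly how the paper proceeds.

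The gap is in the step that performs the conversion. Coordinate-wise Abel summation with \emph{adjacent} differences leaves coefficients that are partial sums $S_r=\sum_{r'=0}^{r}\overline{\wal_{\kappa_j}(r'/q)}$ of a nontrivial character, not single character values; these are not of modulus one (for $q$ prime, $\varphi$ the identity and $\kappa_j=1$ one has $\max_r|S_r|$ of order $q$ and $\sum_r|S_r|$ of order $q^2$), so your claim that ``each of the $q^s$ terms is a character value (modulus $1$) times some $\Delta(f,J)$'' fails, and the resulting bound is worse than the stated one by roughly a factor of $q$ per coordinate. This matters because the constant $q^{-\sum_{j}(a_j-1)}$ is propagated into the explicit constants of Lemma~\ref{lem_proppirs} and Theorem~\ref{th_boundwalshcoeff}. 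The repair --- and what the paper actually does --- is to difference against an \emph{arbitrary} reference digit rather than the adjacent one: from each $a_{(r_1,\ldots,r_s)}$ subtract the average over $\bst\in\{0,\ldots,q-1\}^s$ of the inclusion--exclusion terms $\sum_{\emptyset\neq u}(-1)^{|u|}a_{(\bst_u,\bsr_{\{1,\ldots,s\}\setminus u})}$ (each such term is annihilated by the character sum because it is independent of $r_j$ for $j\in u$), which leaves the full alternating sum $\sum_{u}(-1)^{|u|}a_{(\bst_u,\bsr_{\{1,\ldots,s\}\setminus u})}=\int_{\mathrm{sub\mbox{-}box}}\Delta(f,J_{\bsx})\rd\bsx$; one then drops the characters (modulus one) and bounds each of the $q^s$ remaining terms by $q^{-\sum_j a_j}\sup_J|\Delta(f,J)|$. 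Note that the admissible boxes satisfy $q^{a_j}|e_j-d_j|\in\{1,\ldots,q-1\}$ rather than $=1$ precisely to accommodate differences between non-adjacent digit values $r_j\neq t_j$; your restriction to side length exactly $q^{-a_j}$ is the symptom of forcing adjacent-difference telescoping where it cannot deliver the constant.
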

\begin{proof}
We have $\overline{\wal_{k_j}} = \overline{\wal_{\kappa_j q^{a_j-1}}} \; \overline{\wal_{k'_j}}$ and the function $\overline{\wal_{k'_j}}$ is constant on each subinterval $[c_j q^{-a_j+1}, (c_j+1) q^{-a_j+1})$. Hence we have
\begin{eqnarray*}
\lefteqn{\left|\int_{\prod_{j=1}^s [c_j q^{-a_j+1}, (c_j+1)q^{-a_j+1})} f(\bsx) \; \overline{\wal_{\bsk}(\bsx)} \rd \bsx \right| } \\ & = & \left|\int_{\prod_{j=1}^s [c_j q^{-a_j+1}, (c_j+1)q^{-a_j+1})} f(\bsx) \; \prod_{j=1}^s \overline{\wal_{\kappa_j q^{a_j-1}}(x_j)} \rd \bsx \right|.
\end{eqnarray*}
Note that the function $\overline{\wal_{\kappa_j q^{a_j-1}}}$ is constant on each of the subintervals $[r_j q^{-a_j}, (r_j +1) q^{-a_j})$ for $r_j = 0,\ldots, q^{a_j}-1$ for $j = 1,\ldots, s$. Without loss of generality we may assume that $c_j = 0$, for all other $c_j$ the result follows by the same arguments. Thus we have
\begin{eqnarray*}
\lefteqn{\int_{\prod_{j=1}^s [0, q^{-a_j+1})} f(\bsx) \; \prod_{j=1}^s \overline{\wal_{\kappa_j q^{a_j-1}}(x_j)} \rd \bsx } \\ & = & \sum_{r_1,\ldots, r_s = 0}^{q-1} \prod_{j=1}^s \overline{\wal_{\kappa_j}(r_j/q)} \int_{\prod_{j=1}^s [r_j q^{-a_j}, (r_j +1) q^{-a_j})} f(\bsx) \rd \bsx.
\end{eqnarray*}
Let now $a_{(r_1,\ldots, r_s)} = \int_{\prod_{j=1}^s [r_j q^{-a_j},
(r_j+1) q^{-a_j})} f(\bsx) \rd \bsx$ and for a given $0 \le
r_1,\ldots, r_s < q$ let \begin{equation}\label{def_A} A(r_1,\ldots,
r_s) = q^{-s} \sum_{t_1,\ldots, t_s = 0}^{q-1} \sum_{\emptyset \neq
u \subseteq\{1,\ldots, s\}} (-1)^{|u|} a_{(\bst_u,\bsr_{\{1,\ldots,
s\}\setminus u})},\end{equation} where $(\bst_u,\bsr_{\{1,\ldots,
s\}\setminus u})$ denotes the vector obtained by setting the $j$-th
coordinate to $t_j$ if $j \in u$ and $r_j$ if $j \notin u$. Further
let
$$B(r_1,\ldots, r_s) = q^{-s} \sum_{t_1,\ldots, t_s = 0}^{q-1}
\sum_{u \subseteq\{1,\ldots, s\}} (-1)^{|u|}
a_{(\bst_u,\bsr_{\{1,\ldots, s\}\setminus u})}.$$ Then we have
\begin{eqnarray*}
\lefteqn{\sum_{r_1,\ldots, r_s =0}^{q-1} \prod_{j=1}^s
\overline{\wal_{\kappa_j}(r_j/q)} a_{(r_1,\ldots, r_s)} } \hspace{2cm} \\
& = & -\sum_{r_1,\ldots, r_s =0}^{q-1} \prod_{j=1}^s
\overline{\wal_{\kappa_j}(r_j/q)} A(r_1,\ldots, r_s) \\ && +
\sum_{r_1,\ldots, r_s =0}^{q-1} \prod_{j=1}^s
\overline{\wal_{\kappa_j}(r_j/q)} (a_{(r_1,\ldots, r_s)} +
A(r_1,\ldots, r_s)).
\end{eqnarray*}
Since $\sum_{r=0}^{q-1} \overline{\wal_{\kappa}(r/q)}  = 0$ and
$A(r_1,\ldots, r_s)$ is a sum where each summand does not depend on
at least one $r_j$, i.e. the case $u = \emptyset$ is excluded in
(\ref{def_A}), it follows that the first sum on the right hand side
above is zero. Further we have $a_{(r_1,\ldots, r_s)} +
A(r_1,\ldots, r_s) = B(r_1,\ldots, r_s)$ and thus
\begin{equation*}
\left|\sum_{r_1,\ldots, r_s =0}^{q-1} \prod_{j=1}^s \overline{\wal_{\kappa_j}(r_j/q)} a_{(r_1,\ldots, r_s)} \right| \le \sum_{r_1,\ldots, r_s =0}^{q-1} \left|B(r_1,\ldots, r_s) \right|.
\end{equation*}
We have $$|B(r_1,\ldots, r_s)| \le \max_{\bst \in \{0,\ldots, q-1\}^s} \left|\sum_{u \subseteq\{1,\ldots, s\}} (-1)^{|u|} a_{(\bst_u,\bsr_{\{1,\ldots, s\}\setminus u})} \right|.$$

Therefore we have
\begin{eqnarray*}
\lefteqn{ \left|\int_{\prod_{j=1}^s [0,q^{-a_j+1})} f(\bsx) \; \prod_{j=1}^s \overline{\wal_{\kappa_j q^{a_j-1}}(x_j)} \rd \bsx \right| } \\ & \le & \sum_{r_1,\ldots, r_s = 0}^{q-1} \max_{\bst \in \{0,\ldots, q-1\}^s} \left|\sum_{u \subseteq\{1,\ldots, s\}} (-1)^{|u|} a_{(\bst_u,\bsr_{\{1,\ldots, s\}\setminus u})} \right| \\ & \le &  q^s \max_{\bsr, \bst \in \{0,\ldots, q-1\}^s}  \left|\sum_{u \subseteq\{1,\ldots, s\}} (-1)^{|u|} a_{(\bst_u,\bsr_{\{1,\ldots, s\}\setminus u})} \right|.
\end{eqnarray*}
Note that if in the above maximum there is a $j$ such that $r_j = t_j$ then it follows that
$$\left|\sum_{u \subseteq\{1,\ldots, s\}} (-1)^{|u|} a_{(\bst_u,\bsr_{\{1,\ldots, s\}\setminus u})} \right| = 0.$$ Hence we may in the following assume without loss of generality that the maximum in the last line of the inequality above is taken on for $\bsr = (r_1,\ldots, r_s)$ and $\bst = (t_1,\ldots, t_s)$ which satisfy $r_j \neq t_j$ for $j = 1,\ldots, s$.

We have
\begin{eqnarray*}
\lefteqn{ \left|\sum_{u \subseteq\{1,\ldots, s\}} (-1)^{|u|} a_{(\bst_u,\bsr_{\{1,\ldots, s\}\setminus u})} \right| } \\ & = & \left|\int_{\prod_{j=1}^s [r_j q^{-a_j}, (r_j+1) q^{-a_j})} \sum_{u\subseteq\{1,\ldots, s\}} (-1)^{|u|} f(\bsx + \bsy_{u}) \rd \bsx \right|,
\end{eqnarray*}
where $\bsy_u = (y_1,\ldots, y_s)$ with $y_j = 0$ for $j \notin u$
and $y_j = (t_j-r_j) q^{-a_j}$ for $j \in u$. We can write
$$\sum_{u\subseteq\{1,\ldots, s\}} (-1)^{|u|} f(\bsx + \bsy_{u}) =
\Delta(f,J_{\bsx}),$$ where $J_{\bsx} = \prod_{j=1}^s [\min(x_j, x_j
+ (t_j-r_j) q^{-a_j}), \max(x_j, x_j + (t_j-r_j) q^{-a_j}))$.
Therefore it follows that
\begin{eqnarray*}
\left|\sum_{u \subseteq\{1,\ldots, s\}} (-1)^{|u|} a_{(\bst_u,\bsr_{\{1,\ldots, s\}\setminus u})} \right| & \le & q^{-\sum_{j=1}^s a_j} \sup_{\bsx \in \prod_{j=1}^s [r_j q^{-a_j},(r_j+1)q^{-a_j})} |\Delta(f,J_{\bsx})|.
\end{eqnarray*}
The result follows.
\end{proof}

In the following lemma we now obtain a bound on the Walsh
coefficients for functions of bounded variation. It is a
generalization of \cite[Proposition~6]{pirsic}.
\begin{lemma}\label{lem_proppirs}
Let $0 < \lambda \le 1$ and let $f \in \LL_2([0,1)^s)$ satisfy
$V_{\lambda,1,1,\bsgamma}(f) < \infty$. Then for any $\bsk \in
\NN_0^s\setminus\{\bszero\}$ the $\bsk$-th Walsh coefficient of $f$
satisfies $$|\hat{f}(\bsk)| \le q^{|u|-\lambda\sum_{j\in u} (a_j-1)}
V^{(|u|)}_{\lambda,1}(f_u;u),$$ where $\bsk = (k_1,\ldots, k_s)$, $u
= \{1\le j \le s: k_j \neq 0\}$ and for $j \in u$ we have $k_j =
\kappa_j q^{a_j-1} + k'_j$, where $\kappa_j \in \{1,\ldots, q-1\}$,
$a_j \in \NN$ and $0 \le k'_j < q^{a_j-1}$.
\end{lemma}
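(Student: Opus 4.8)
The idea is to decompose the integral defining $\hat{f}(\bsk)$ into small boxes on which the coordinates $x_j$ with $j\notin u$ are integrated out, and then apply Lemma~\ref{lem_pirs} to the resulting $|u|$-dimensional function $f_u$. First I would note that since $k_j = 0$ for $j\notin u$, we have $\wal_{k_j}(x_j)=\wal_0(x_j)=1$ for those coordinates, so Fubini gives
\begin{equation*}
\hat{f}(\bsk) = \int_{[0,1)^{|u|}} f_u(\bsx_u)\,\overline{\wal_{\bsk_u}(\bsx_u)}\rd\bsx_u,
\end{equation*}
where $\bsk_u = (k_j)_{j\in u}$ and $f_u(\bsx_u) = \int_{[0,1)^{s-|u|}} f(\bsx)\rd\bsx_{\cS\setminus u}$ is exactly the projected function appearing in the definition of $V^{(|u|)}_{\lambda,1}(f_u;u)$. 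This reduces the problem to the case where all components of the wavenumber are nonzero, so without loss of generality I may assume $u=\cS$, i.e. $k_j\neq 0$ for all $j$, and write $k_j = \kappa_j q^{a_j-1}+k'_j$ as in the statement.

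Next I would partition $[0,1)^s = \bigcup_{\bsc} \prod_{j=1}^s [c_j q^{-a_j+1},(c_j+1)q^{-a_j+1})$ over all $\bsc = (c_1,\ldots,c_s)$ with $0\le c_j < q^{a_j-1}$, so that
\begin{equation*}
|\hat{f}(\bsk)| \le \sum_{\bsc} \left|\int_{\prod_{j=1}^s[c_jq^{-a_j+1},(c_j+1)q^{-a_j+1})} f_u(\bsx)\,\overline{\wal_{\bsk}(\bsx)}\rd\bsx\right|.
\end{equation*}
Lemma~\ref{lem_pirs} bounds each summand by $q^{-\sum_{j=1}^s(a_j-1)}\sup_J|\Delta(f_u,J)|$, where the supremum runs over boxes $J\subseteq \prod_j[c_jq^{-a_j+1},(c_j+1)q^{-a_j+1})$ with $q^{a_j}|e_j-d_j|\in\{1,\ldots,q-1\}$; in particular such $J$ have $\Vol(J) < q^{-\sum_j(a_j-1)}$. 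The key point is then to control $\sup_J|\Delta(f_u,J)|$ using the generalized Vitali variation: for a single box $J$ we have $|\Delta(f_u,J)| = \Vol(J)^\lambda \cdot \Vol(J)^{-\lambda}|\Delta(f_u,J)| \le \Vol(J)^\lambda\, V^{(s)}_{\lambda,\infty}(f_u)$, but since $V^{(s)}_{\lambda,1}(f_u) \ge V^{(s)}_{\lambda,\infty}(f_u)$ (as noted after the definition, via H\"older's inequality, or directly since a partition into one box is admissible), we get $|\Delta(f_u,J)| \le \Vol(J)^\lambda V^{(s)}_{\lambda,1}(f_u) \le q^{-\lambda\sum_j(a_j-1)}V^{(s)}_{\lambda,1}(f_u)$ for every admissible $J$. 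Actually, to be careful with the normalization of the supremum over $J$ it is cleaner to observe that the family of all such boxes $J$, as $\bsc$ ranges over all $q^{\sum_j(a_j-1)}$ choices, can be organized into a single partition of $[0,1]^s$ (pick the maximal admissible $J$, with $|e_j-d_j| = (q-1)q^{-a_j}$, plus the leftover slabs), and the sum over that partition of $\Vol(J)|\Delta(f_u,J)/\Vol(J)^\lambda|$ is at most $V^{(s)}_{\lambda,1}(f_u)$ by definition.

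Combining: there are $q^{\sum_{j=1}^s(a_j-1)}$ boxes $\bsc$ in the outer sum, each contributing at most $q^{-\sum_j(a_j-1)}\cdot q^{-\lambda\sum_j(a_j-1)}V^{(s)}_{\lambda,1}(f_u)$, which gives $|\hat{f}(\bsk)| \le q^{-\lambda\sum_j(a_j-1)}V^{(s)}_{\lambda,1}(f_u)$; restoring the general $u$ (where the factor $q^{|u|}$ appears because Lemma~\ref{lem_pirs} in dimension $|u|$ produces $q^{-\sum_{j\in u}(a_j-1)}\sup_J|\Delta|$ and the counting/variation estimate in dimension $|u|$ is what was just done) yields exactly $|\hat f(\bsk)| \le q^{|u|-\lambda\sum_{j\in u}(a_j-1)}V^{(|u|)}_{\lambda,1}(f_u;u)$. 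The main obstacle I anticipate is the bookkeeping in passing from the per-box supremum $\sup_J|\Delta(f_u,J)|$ in Lemma~\ref{lem_pirs} to the global Vitali variation without losing a factor of $q$ per dimension: one must either carefully pack the boxes over all $\bsc$ into a single legitimate partition, or else absorb an extra $q^{|u|}$ from counting sub-boxes within each $\bsc$-cell — tracking exactly where the stated $q^{|u|}$ prefactor comes from (versus the $q^{-\lambda\sum(a_j-1)}$ decay) is the delicate part, and the choice of whether to bound $\Delta(f_u,J)$ boxwise by $\Vol(J)^\lambda V_{\lambda,\infty}$ or collectively by $V_{\lambda,1}$ determines how tight the constant is.
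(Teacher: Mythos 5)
Your overall strategy --- reduce to $u=\cS$ via Fubini, split $[0,1)^s$ into the cells $\prod_{j}[c_jq^{-a_j+1},(c_j+1)q^{-a_j+1})$, apply Lemma~\ref{lem_pirs} on each cell, and then pass to the generalized Vitali variation --- is exactly the paper's. But the step that converts $\sup_J|\Delta(f_u,J)|$ into the variation contains a genuine error: you invoke $V^{(s)}_{\lambda,1}(f_u)\ge V^{(s)}_{\lambda,\infty}(f_u)$, and this inequality is backwards. The H\"older estimate following (\ref{fracVitalivar}) gives $V^{(s)}_{\lambda,1}(f)\le V^{(s)}_{\lambda,\pp}(f)$ for all $\pp\ge 1$, i.e.\ the $\pp=1$ variation is the \emph{smallest} one; and your alternative justification (``a partition into one box is admissible'') fails because a proper subbox $J\subsetneq[0,1)^s$ is not by itself a partition of $[0,1]^s$ --- embedding $J$ into a partition only yields $\Vol(J)\,|\Delta(f_u,J)|/\Vol(J)^{\lambda}\le V^{(s)}_{\lambda,1}(f_u)$, which is weaker than what you need by the factor $\Vol(J)^{-1}\approx q^{\sum_j a_j}$. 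With the inequality corrected, your boxwise estimate proves $|\hat{f}(\bsk)|\le q^{-\lambda\sum_j(a_j-1)}V^{(s)}_{\lambda,\infty}(f_u)$, a statement involving the larger variation $V_{\lambda,\infty}$ rather than the $V_{\lambda,1}$ asserted in the lemma; it neither implies the lemma nor produces the $q^{|u|}$ prefactor, which your ``Combining'' paragraph indeed does not obtain and then retrofits without an actual argument.

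The fix is precisely the ``cleaner'' variant you mention in passing but do not carry out. For each cell $\bsc$ pick a box $J_{\bsc}$ nearly attaining the supremum in Lemma~\ref{lem_pirs}, assemble these boxes (plus leftover slabs) into a single partition of $[0,1]^s$, and write $q^{-\sum_j(a_j-1)}|\Delta(f_u,J_{\bsc})|$ as $\bigl(q^{-\sum_j(a_j-1)}\Vol(J_{\bsc})^{\lambda-1}\bigr)\cdot\Vol(J_{\bsc})\,|\Delta(f_u,J_{\bsc})|/\Vol(J_{\bsc})^{\lambda}$. Since $q^{-\sum_j a_j}\le\Vol(J_{\bsc})\le q^{-\sum_j(a_j-1)}$ and $\lambda-1\le 0$, the bracketed prefactor is at most $q^{s-\lambda\sum_j(a_j-1)}$ --- this is exactly where the $q^{|u|}$ comes from --- and the sum of the second factors over the assembled partition is at most $V^{(s)}_{\lambda,1}(f_u)$ by the definition of the variation. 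This is what the paper does (phrased with a supremum over partitions of each cell rather than a near-optimal box). So the architecture of your proof is right, but as written the key estimate rests on a reversed inequality and the stated bound is not reached.
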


\begin{proof}
Let $f \in \LL_2([0,1)^s)$ with $\bsk$-th Walsh coefficient
$\hat{f}(\bsk)$. First note that it suffices to show the result for
$\bsk \in \NN^s$, as otherwise we only need to replace the function
$f$ with the function $f_u(\bsx_u) = \int_{[0,1)^{s-|u|}} f(\bsx)
\rd\bsx_{\cS\setminus u}$. Hence let now $\bsk\in\NN^s$ be given and
let $\bsk' = (k'_1,\ldots, k'_s)$. Then we have
\begin{eqnarray*}
\lefteqn{ \left|\int_{[0,1)^s} f(\bsx) \;\overline{\wal_{\bsk}(\bsx)} \rd \bsx\right| } \\ & \le & \sum_{0 \le c_j < q^{a_j-1} \atop 1 \le j \le s} \left|\int_{c_1 q^{-a_1+1}}^{(c_1+1) q^{-a_1+1}} \cdots \int_{c_s q^{-a_s+1}}^{(c_s+1) q^{-a_s+1}} f(\bsx) \; \overline{\wal_{\bsk}(\bsx)}\rd\bsx\right|.
\end{eqnarray*}

Now we use Lemma~\ref{lem_pirs} and thereby obtain that the above sum is bounded by $$\sum_{0 \le c_1 < q^{a_1-1}} \cdots \sum_{0 \le c_s < q^{a_s-1}} q^{-\sum_{j=1}^s (a_j-1)} \sup_{J} |\Delta(f,J)|,$$ where the supremum is taken over all boxes $J = \prod_{j=1}^s [d_j, e_j) \subseteq \prod_{j=1}^s [c_j q^{-a_j+1}, (c_j+1) q^{-a_j+1})$ with $q^{a_j}|e_j-d_j| \in \{1,\ldots, q-1\}$. Now we have $$q^{-\sum_{j=1}^s (a_j-1)} \sup_{J} |\Delta(f,J)| \le \sup_{\Pcal_{\bsc}} \sum_{I\in \Pcal_{\bsc}} \Vol(I)^{1-\lambda}|\Delta(f,I)| \frac{q^{-\sum_{j=1}^s (a_j-1)}}{\Vol(I)^{1-\lambda}},$$ where the supremum on the right hand side is taken over all partitions $\Pcal_{\bsc}$ of the cube $\prod_{j=1}^s [c_j q^{-a_j+1}, (c_j+1) q^{-a_j+1})$ and where each $I \in \Pcal_{\bsc}$ is of the form $I = \prod_{j=1}^s [x_j,y_j)$ with $q^{a_j}|y_j-x_j| \in \{1,\ldots, q-1\}$. We have $q^{-\sum_{j=1}^s a_j} \le \Vol(I) \le q^{-\sum_{j=1}^s (a_j-1)}$ and therefore $$\frac{q^{-\sum_{j=1}^s (a_j-1)}}{\Vol(I)^{1-\lambda}} \le \Vol(I)^\lambda q^s \le q^{s-\lambda\sum_{j=1}^s (a_j-1)}$$ and hence
\begin{equation*}
q^{-\sum_{j=1}^s (a_j-1)} \sup_{J} |\Delta(f,J)| \le q^{s-\lambda\sum_{j=1}^s (a_j-1)} \sup_{\Pcal_{\bsc}} \sum_{I\in \Pcal_{\bsc}} \Vol(I)^{1-\lambda}|\Delta(f,I)|.
\end{equation*}
Note that
\begin{equation*}
\sum_{0 \le c_j < q^{a_j-1} \atop 1 \le j \le s}  \sup_{\Pcal_{\bsc}} \sum_{I\in \Pcal_{\bsc}} \Vol(I)^{1-\lambda}|\Delta(f,I)| \le   \sup_{\Pcal} \sum_{J\in \Pcal} \Vol(J) \frac{|\Delta(f,J)|}{\Vol(J)^\lambda}
\end{equation*}
where the supremum on the left hand side is taken over all partitions $\Pcal$ of the cube $\prod_{j=1}^s [c_j q^{-a_j+1}, (c_j+1) q^{-a_j+1})$ into subintervals and the supremum on the right hand side is taken over all partitions of $[0,1)^s$ into subintervals. Thus the result follows.
\end{proof}

For the next lemma we will need the following two functions. For $\kappa\in\{1,\ldots, q-1\}$ let now $$\upsilon_{\kappa} = \sum_{r=0}^{q-1} r \wal_{\kappa}(r/q).$$ If $q$ is chosen to be a prime number and the bijections $\varphi$ and $\eta$ are chosen to be the identity, then $\upsilon_{\kappa} = q (\de^{2\pi\icomp \kappa/q}-1)^{-1}$, see \cite[Appendix~A]{DP05}.

Further for $l \in \{1,\ldots, q-1\}$ we define the function
$\zeta_a(x) = \sum_{r = 0}^{x_a-1} \overline{\wal_{l}(r/q)}$, where
$a \ge 1$ and $x = x_1 q^{-1} + x_2 q^{-2} + \cdots$ and where for
$x_a = 0$ we set $\zeta_a(x) = 0$. The function $\zeta_a$ depends on
$x$ only through $x_a$, thus it is a step-function which is constant
on the intervals $[c q^{-a}, (c+1)q^{-a})$ for $c = 0,\ldots,
q^a-1$. By \cite[Proposition~5]{pirsic} it follows that $\zeta_a$
can be represented by a finite Walsh series. Indeed, there are
numbers $c_0,\ldots, c_{q-1}$ (which depend on $l$ but not on $a$)
such that
$$\zeta_a(x) = \sum_{z=0}^{q-1} c_z \overline{\wal_{zq^{a-1}}(x)}.$$
If $q$ is chosen to be a prime number and the bijections $\varphi$
and $\eta$ are chosen to be the identity, then $\zeta_a(x) =
(1-\overline{\wal_{lq^{a-1}}(x)}) (1-\overline{\wal_l(1/q)})^{-1}$,
i.e., $c_0 = (1-\overline{\wal_l(1/q)})^{-1}$, $c_l =
(\overline{\wal_l(1/q)}-1)^{-1}$ and $c_z = 0$ for $z \neq 0,l$.

The following lemma will be used in the induction step for
differentiable functions. For example, for a differentiable function
$F:\real \rightarrow \real$ given by $F(x) = \int_0^x f(y) \rd y$ we
can calculate the Walsh coefficients using integration by parts in
the following way: for $k
> 0$ we have
\begin{eqnarray}\label{eq_indstep}
\hat{F}(k) & = & \int_0^1 F(x) \overline{\wal_k(x)} \rd x  = \left[
\int_0^x \overline{\wal_k(y)} \rd y F(x) \right]_0^1 - \int_0^1
 f(x) \int_0^x \overline{\wal_k(y)} \rd y \rd x \nonumber \\
& = & - \int_0^1 f(x) \int_0^x \overline{\wal_k(y)} \rd y \rd x,
\end{eqnarray}
where we used $\int_0^0 \overline{\wal_k(x)} \rd x = \int_0^1
\overline{\wal_k(x)} \rd x = 0$. For $k = 0$ on other hand we obtain
\begin{eqnarray}\label{eq_indstep0}
\hat{F}(0) & = & \int_0^1 F(x) \overline{\wal_0(x)} \rd x  = \left[
\int_0^x \overline{\wal_0(y)} \rd y F(x) \right]_0^1 - \int_0^1
 f(x) \int_0^x \overline{\wal_0(y)} \rd y \rd x \nonumber \\
& = &  \int_0^1 f(x) \rd x - \int_0^1 f(x) \int_0^x
\overline{\wal_0(y)} \rd y \rd x,
\end{eqnarray}

Thus if we know the Walsh series for $f$, then we can easily
calculate the Walsh series for $F$, provided that we know the Walsh
series for $\int_0^x \overline{\wal_k(y)} \rd y$. This will be
calculated in the following lemma. It appeared in a simpler form in
\cite{Fine}.
\begin{lemma}\label{lem_intJ}
For $k \in \NN_0$ and $x \in [0,1)$ define $J_{k}(x) = \int_{0}^x \overline{\wal_{k}(y)} \rd y$. For $k \ge 1$ let $k = l q^{a-1} + k'$ where $l \in \{1,\ldots, q-1\}$, $a \ge 1$ and $0 \le k' < q^{a-1}$. Then $J_{k}$ can be represented by a Walsh series which is given by
\begin{equation*}
J_{k}(x) = q^{-a}\Bigg( \sum_{z=0}^{q-1} c_z \overline{\wal_{z q^{a-1} + k'}(x)} + 2^{-1} \overline{\wal_{k}(x)} + \sum_{c=1}^\infty \sum_{\kappa=1}^{q-1} q^{-c-1} \upsilon_{\kappa} \overline{\wal_{\kappa q^{a+c-1} + k}(x)}\Bigg).
\end{equation*}
Further we have $$J_0(x) = 1/2 + \sum_{c=1}^\infty \sum_{\kappa=1}^{q-1} q^{-c-1} \upsilon_{\kappa} \; \overline{\wal_{\kappa q^{c-1}}(x)}.$$
\end{lemma}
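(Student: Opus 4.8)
The goal is to compute the Walsh series of the antiderivative $J_k(x)=\int_0^x\overline{\wal_k(y)}\rd y$. The plan is to proceed by computing the Walsh coefficients $\hat J_k(m)=\int_0^1 J_k(x)\overline{\wal_m(x)}\rd x$ for all $m\ge 0$ using integration by parts, and then to identify which $m$ give non-zero coefficients. Write $k=lq^{a-1}+k'$ with $l\in\{1,\dots,q-1\}$, $a\ge 1$, $0\le k'<q^{a-1}$. The key structural observation is that $\overline{\wal_k(y)}$ as a function of $y$ only "sees" the first $a$ digits of $y$ in a product form: $\overline{\wal_k(y)}=\overline{\wal_{k'}(y)}\,\overline{\wal_{lq^{a-1}}(y)}$, where $\overline{\wal_{k'}}$ is constant on each interval $[cq^{-(a-1)},(c+1)q^{-(a-1)})$ and $\overline{\wal_{lq^{a-1}}}$ depends only on the $a$-th digit $y_a$.

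First I would treat the base case $k=0$, where $J_0(x)=x$, and reduce the problem to computing the Walsh coefficients of the linear function $x$. Here one splits $[0,x)$ according to the first digit $x_1$: writing $x=x_1q^{-1}+x'$ with $x'\in[0,q^{-1})$, one gets $\int_0^x\rd y = x_1q^{-1}$ plus a remainder integral over the last $x'$-part; more cleanly, $\hat J_0(m)=\int_0^1 x\,\overline{\wal_m(x)}\rd x$ can be evaluated by decomposing $x$ digit-by-digit as $x=\sum_{a\ge 1}x_a q^{-a}$ and using the orthogonality relations (Proposition~\ref{walshprops}, items 2--4) together with the definition of $\upsilon_\kappa=\sum_{r=0}^{q-1}r\,\wal_\kappa(r/q)$. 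Each digit $x_a$ contributes the factor $q^{-a}$ times $\sum_r r\,\wal_\kappa(r/q)$ over the $a$-th digit block, which forces $m$ to be of the form $\kappa q^{a-1}$ for some $a\ge 1$, $\kappa\in\{1,\dots,q-1\}$, and the geometric-type sum over the remaining higher digits produces the $q^{-c-1}$ factor. This yields the stated formula for $J_0$.

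For general $k\ge 1$ the plan is to use the identity $\wal_m(x)\overline{\wal_k(x)}=\wal_{m\ominus k}(x)$ wherever it applies, so that $\hat J_k(m)=\int_0^1 J_k(x)\overline{\wal_m(x)}\rd x$; integrating by parts as in \eqref{eq_indstep}--\eqref{eq_indstep0} converts this into $-\int_0^1 \overline{\wal_k(x)}\,J_m(x)\rd x$ (up to the $k=0$-type boundary term when $m=0$), but it is cleaner to argue directly. Split $[0,x)=[0,\lfloor x\rfloor_a)\cup[\lfloor x\rfloor_a,x)$ where $\lfloor x\rfloor_a=\sum_{i=1}^{a-1}x_iq^{-i}$ truncates to the first $a-1$ digits: on the first piece $\overline{\wal_{lq^{a-1}}}$ integrates to zero digit-by-digit in the $a$-th and later coordinates once we account for the constancy of $\overline{\wal_{k'}}$, contributing the three groups of terms $\sum_z c_z\overline{\wal_{zq^{a-1}+k'}}$, $\tfrac12\overline{\wal_k}$ and the tail $\sum_{c\ge 1}\sum_\kappa q^{-c-1}\upsilon_\kappa\overline{\wal_{\kappa q^{a+c-1}+k}}$; the coefficients $c_z$ are exactly those from the finite Walsh expansion of $\zeta_a$ introduced just before the lemma, since $\zeta_a(x)=\sum_{r=0}^{x_a-1}\overline{\wal_l(r/q)}$ is precisely the partial sum that appears when integrating $\overline{\wal_{lq^{a-1}}}$ over $[0,x_aq^{-a})$. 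The $q^{-a}$ prefactor is the scale of a single $a$-th-digit block.

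The main obstacle will be bookkeeping the digit decomposition cleanly: one must carefully separate the contribution of the "frozen" low digits $x_1,\dots,x_{a-1}$ (which just reproduce $\overline{\wal_{k'}}$ and hence shift every resulting wavenumber by $k'$), the single "active" digit $x_a$ (which produces the $\zeta_a$-expansion and the $\tfrac12\overline{\wal_k}$ diagonal term, the latter coming from the $\sum_{r=0}^{q-1}(q-1-2r+\cdots)$ type self-pairing — concretely from $\int_0^1 \overline{\wal_l(x_aq^{-a})}$ against itself on a single block giving $\tfrac12$ in the limit of the geometric averaging), and the infinitely many "free" higher digits $x_{a+1},x_{a+2},\dots$ (which contribute the geometric tail with $\upsilon_\kappa$ and the $q^{-c-1}$ weights, exactly as in the $J_0$ computation). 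Verifying $L_2$-convergence of the resulting series and that no other wavenumbers $m$ appear — i.e. that the coefficient vanishes unless $m$ has the prescribed digit pattern — is routine given Proposition~\ref{walshprops}, item 2, applied blockwise, but it is where all the care is needed.
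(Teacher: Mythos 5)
Your proposal follows essentially the same route as the paper: factor $\overline{\wal_k}=\overline{\wal_{k'}}\,\overline{\wal_{lq^{a-1}}}$, observe that the integral over each complete block of length $q^{-a+1}$ vanishes so that $J_k(x)=\overline{\wal_{k'}(x)}\,J_{lq^{a-1}}(x)$, then write $J_{lq^{a-1}}(x)=q^{-a}\zeta_a(x)+q^{-a}\overline{\wal_l(x_a/q)}\,y$ with $y$ the digit tail, and expand $\zeta_a$ and the linear function $y$ into their known Walsh series. The only imprecision is your account of the $\tfrac12\overline{\wal_k}$ term, which is simply the constant coefficient $\int_0^1 y\,\rd y=\tfrac12$ of the tail's expansion multiplied by $\overline{\wal_{lq^{a-1}}(x)}\,\overline{\wal_{k'}(x)}=\overline{\wal_k(x)}$, not a ``self-pairing''; this does not affect the validity of the argument.
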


\begin{proof}
Let $k = l q^{a-1} + k'$ with $a \ge 1$, $0 \le k' < q^{a-1}$ and $l \in \{1,\ldots, q-1\}$. The function $\overline{\wal_{l q^{a-1}}(y)}$ is constant on each interval $[r q^{-a}, (r+1) q^{-a})$ and $\overline{\wal_{k'}(y)}$ is constant on each interval $[c q^{-a+1}, (c+1) q^{-a+1})$. We have $\overline{\wal_k(y)} = \overline{\wal_{l q^{a-1}}(y)} \; \overline{\wal_{k'}(y)}$.
For any $0 \le c < q^{a-1}$ we have
\begin{eqnarray*}
\int_{[c q^{-a+1}, (c+1) q^{-a+1})} \!\!\!\!\overline{\wal_k(y)}\rd y & = & \overline{\wal_{k'}(cq^{-a+1})} \int_{[c q^{-a+1}, (c+1) q^{-a+1})} \!\!\!\!\overline{\wal_{l q^{a-1}}(y)}\rd y \\ & = & \overline{\wal_{k'}(c q^{-a+1})} q^{-a} \sum_{r = 0}^{q-1} \wal_l(r/q) \\ & = & 0.
\end{eqnarray*}

Thus we have $$J_k(x)  = \overline{\wal_{k'}(x)} J_{l q^{a-1}}(x).$$
Let $x = x_1 q^{-1} + x_2 q^{-2} + \cdots$ and $y = x_{a+1} q^{-1} +
x_{a+2} q^{-2} + \cdots$, then we have $$J_{l q^{a-1}}(x) = q^{-a}
\sum_{r = 0}^{x_a-1} \overline{\wal_{l}(r/q)} + q^{-a}
\overline{\wal_{l}(x_a/q)} y.$$

We now investigate the Walsh series representation of the function $J_{l q^{a-1}}(x)$. First note that  $\overline{\wal_l(x_a/q)} = \overline{\wal_{lq^{a-1}}(x)}$. Further, by a slight adaption of \cite[eq. (30)]{DP05} we obtain
\begin{equation}\label{eq_xwalsh}
y = 1/2 + \sum_{c=1}^\infty \sum_{\kappa=1}^{q-1} q^{-c-1} \upsilon_{\kappa} \; \overline{\wal_{\kappa q^{c-1}}(y)}.
\end{equation}
As $\overline{\wal_{\kappa q^{c-1}}(y)} = \overline{\wal_{\kappa q^{a+c-1}}(x)}$ we obtain $$y = 1/2 + \sum_{c=1}^\infty \sum_{\kappa=1}^{q-1} q^{-c-1} \upsilon_{\kappa} \; \overline{\wal_{\kappa q^{a+c-1}}(x)}.$$

As noted above, the Walsh series of $\zeta_a(x) = \sum_{r = 0}^{x_a-1} \overline{\wal_{l}(r/q)}$, where for $x_a = 0$ we set $\zeta_a(x) = 0$ can be written as $$\zeta_a(x) = \sum_{z=0}^{q-1} c_z \overline{\wal_{zq^{a-1}}(x)}.$$

Altogether we obtain
\begin{eqnarray*}
q^a J_{lq^{a-1}}(x) & = & \sum_{z=0}^{q-1} c_z \overline{\wal_{z q^{a-1}}(x)} + 2^{-1} \overline{\wal_{l q^{a-1}}(x)} \\ && \qquad\quad  + \sum_{c=1}^\infty \sum_{\kappa=1}^{q-1} q^{-c-1} \upsilon_{\kappa} \overline{\wal_{\kappa q^{a+c-1} + l q^{a-1}}(x)}
\end{eqnarray*}
and therefore
\begin{equation*}
q^a J_{k}(x) = \sum_{z=0}^{q-1} c_z \overline{\wal_{z q^{a-1} + k'}(x)} + 2^{-1} \overline{\wal_{k}(x)} + \sum_{c=1}^\infty \sum_{\kappa=1}^{q-1} q^{-c-1} \upsilon_{\kappa} \overline{\wal_{\kappa q^{a+c-1} + k}(x)}.
\end{equation*}
The result for $k = 0$ follows easily from (\ref{eq_xwalsh}).
\end{proof}

Note that Lemma~\ref{lem_intJ} can easily be generalized to
arbitrary dimensions $s$, since for $\bsk = (k_1,\ldots, k_s) \in
\NN_0^s$ we have for any $\bsx = (x_1,\ldots, x_s) \in [0,1)^s$ that
$$J_{\bsk}(\bsx) = \int_{[0,\bsx)} \overline{\wal_{\bsk}(\bsy)} \rd
\bsy = \prod_{j=1}^s J_{k_j}(x_j),$$ where $[0,\bsx) = \prod_{j=1}^s
[0,x_j)$.

The next lemma shows how the Walsh coefficients of a function $F =
\int f$ can be obtained from the Walsh coefficients of $f$.
\begin{lemma}\label{lem_sumfF}
Let $f\in \LL_2([0,1)^s)$ and let $F(\bsx) = \int_{[0,\bsx)}
f(\bsy)\rd\bsy$, where $[0,\bsx) = \prod_{j=1}^s [0,x_j)$ with $\bsx
= (x_1,\ldots, x_s)$. Further let $\hat{F}(\bsk)$ denote the
$\bsk$-th Walsh coefficient of $F$. Let $\bsk = (k_1,\ldots, k_s)
\in \NN_0^s$ and let $U = \{1\le j \le s: k_j \neq 0\}$. For $j\in
U$ let $k_j = l_j q^{a_j-1} + k_j'$, $0 < l_j < q$ and $0 \le k_j' <
q^{a_j-1}$ and further let $\bsk' = (k'_1,\ldots, k'_s)$ where $k'_j
=0$ for $j \notin U$. Then we have
\begin{eqnarray*}
\hat{F}(\bsk) & = & q^{-\sum_{j\in U} a_j} \sum_{U \subseteq v
\subseteq \cS} (-1)^{|v|} \sum_{\bsh_v \in \NN_0^{|v|}}
\hat{f}(\bsk' + (\bsh_v,\bszero)) \; \chi_{U,v,\bsk}(\bsh_v),
\end{eqnarray*}
where $(\bsh_v,\bszero)$ denotes the $s$-dimensional vector whose
$j$-th component is $h_j$ for $j \in v$ and $0$ otherwise and where
for $\bsh_v = (h_j)_{j\in v} \in \NN_0^{|v|}$ we set $$\chi_{U,v,
\bsk}(\bsh_v) = \prod_{j \in U} \rho_{k_j}(h_j) \prod_{j \in
v\setminus U} \phi(h_j).$$ Here
$$\rho_{k_j}(h_j) = \left\{\begin{array}{ll} c_z + 2^{-1} 1_{z = l_j}
& \mbox{for } h_j = z q^{a_j-1}, \\ \upsilon_z q^{-i-1} & \mbox{for
} h = z q^{a_j-1+i} + l_j q^{a_j-1}, i > 0, 0 < z < q,
\\ 0 & \mbox{otherwise,} \end{array} \right.$$
where $1_{z = l_j} = 1$ for $z = l_j$ and $0$ otherwise, and
$$\phi(h_j) = \left\{\begin{array}{ll} 2^{-1}
& \mbox{for } h_j = 0, \\ \upsilon_z q^{-i-1} & \mbox{for } h = z
q^{a_j-1+i}, i > 0, 0 < z < q,
\\ 0 & \mbox{otherwise.} \end{array} \right.$$
\end{lemma}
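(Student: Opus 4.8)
The plan is to compute $\hat F(\bsk)$ directly from the definition, turn it into a product of one-dimensional integrals against the functions $J_k$ of Lemma~\ref{lem_intJ}, and then expand those into their Walsh series. Since $F(\bsx)=\int_{[0,\bsx)}f(\bsy)\rd\bsy$ with $[0,\bsx)=\prod_{j=1}^s[0,x_j)$ and $f\in\LL_1([0,1)^s)$, Fubini's theorem applied to $\hat F(\bsk)=\int_{[0,1)^s}F(\bsx)\,\overline{\wal_{\bsk}(\bsx)}\rd\bsx$ gives
$$\hat F(\bsk)=\int_{[0,1)^s}f(\bsy)\prod_{j=1}^s\left(\int_{y_j}^1\overline{\wal_{k_j}(x_j)}\rd x_j\right)\rd\bsy.$$
By the third item of Proposition~\ref{walshprops}, $\int_{y_j}^1\overline{\wal_{k_j}}=\int_0^1\overline{\wal_{k_j}}-J_{k_j}(y_j)$, which is $-J_{k_j}(y_j)$ when $j\in U$ and $1-J_0(y_j)$ when $j\notin U$. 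Expanding $\prod_{j\notin U}(1-J_0(y_j))=\sum_{w\subseteq\cS\setminus U}(-1)^{|w|}\prod_{j\in w}J_0(y_j)$, merging with the factor $\prod_{j\in U}(-J_{k_j}(y_j))$, and setting $v=U\cup w$, the full product collapses to $\sum_{U\subseteq v\subseteq\cS}(-1)^{|v|}\prod_{j\in U}J_{k_j}(y_j)\prod_{j\in v\setminus U}J_0(y_j)$.

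The next step is to substitute the explicit Walsh series of Lemma~\ref{lem_intJ}. The heart of the matter is a relabelling: if one writes every wavenumber appearing in the series for $J_{k_j}$ in the form $k_j'+h_j$, then the three groups of terms occurring there (namely $z q^{a_j-1}+k_j'$ for $0\le z<q$, the separate term $2^{-1}\overline{\wal_{k_j}}$, and $\kappa q^{a_j+c-1}+k_j$ for $c\ge 1$) are precisely the support of $\rho_{k_j}$, with matching coefficients; hence $J_{k_j}(y_j)=q^{-a_j}\sum_{h_j\in\NN_0}\rho_{k_j}(h_j)\,\overline{\wal_{k_j'+h_j}(y_j)}$, and in the same way $J_0(y_j)=\sum_{h_j\in\NN_0}\phi(h_j)\,\overline{\wal_{h_j}(y_j)}$. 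Multiplying these out over $j\in v$ and using $k_j'=0$ for $j\notin U$, the function $\prod_{j\in U}J_{k_j}(y_j)\prod_{j\in v\setminus U}J_0(y_j)$ equals $q^{-\sum_{j\in U}a_j}\sum_{\bsh_v\in\NN_0^{|v|}}\chi_{U,v,\bsk}(\bsh_v)\,\overline{\wal_{\bsk'+(\bsh_v,\bszero)}(\bsy)}$.

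Finally one integrates against $f$. Each $J_{k_j}$ is bounded by $1$, so the product above is a bounded function, hence lies in $\LL_2([0,1)^s)$ and the displayed series is its (at least $\LL_2$-convergent) Walsh expansion; since $f\in\LL_2$ the map $g\mapsto\int_{[0,1)^s}f\,g\rd\bsy$ is continuous, so the series may be integrated term by term, and $\int_{[0,1)^s}f(\bsy)\,\overline{\wal_{\bsk'+(\bsh_v,\bszero)}(\bsy)}\rd\bsy=\hat f(\bsk'+(\bsh_v,\bszero))$. Summing over $U\subseteq v\subseteq\cS$ and pulling out $q^{-\sum_{j\in U}a_j}$ yields the claimed formula. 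I expect the only genuine obstacle to be the middle step: carefully matching the series of Lemma~\ref{lem_intJ} with the piecewise definitions of $\rho_{k_j}$ and $\phi$, in particular the overlap of the $z=l_j$ case with the separate $2^{-1}\overline{\wal_{k_j}}$ term, which is exactly what produces the summand $+2^{-1}1_{z=l_j}$ in $\rho_{k_j}$, together with handling the degenerate conventions for the indices when $j\notin U$. The term-by-term integration is routine; the double sum in fact converges absolutely since $\rho_{k_j}$ and $\phi$ are summable (geometric decay in $c$) while $|\hat f(\cdot)|\le\|f\|_{\LL_2}$.
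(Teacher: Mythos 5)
Your proof is correct and follows essentially the same route as the paper: both reduce $\hat F(\bsk)$ to $\sum_{U\subseteq v\subseteq\cS}(-1)^{|v|}\int_{[0,1)^s}\prod_{j\in v}J_{k_j}(y_j)\,f(\bsy)\rd\bsy$ (the paper via coordinatewise integration by parts with $J_k(0)=J_k(1)=0$, you via Fubini and $\int_{y_j}^1\overline{\wal_{k_j}}=-J_{k_j}(y_j)$ or $1-J_0(y_j)$, which is the same computation) and then substitute the Walsh series of Lemma~\ref{lem_intJ}. You supply more detail than the paper on the relabelling into $\rho_{k_j}$, $\phi$ (including the $c_{l_j}+2^{-1}$ overlap) and on the term-by-term integration, which the paper leaves implicit.
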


\begin{proof}
Using integration by parts in each coordinate, Fubini's theorem and
$J_k(0) = J_k(1) = 0$ for any $k \in\NN$ (see
Equations~(\ref{eq_indstep}) and (\ref{eq_indstep0}) for
one-dimensional examples) it follows that
$$\hat{F}(\bsk) = \int_{[0,1)^s} F(\bsx)
\overline{\wal_{\bsk}(\bsx)} \rd \bsx = \sum_{U \subseteq v
\subseteq \cS} (-1)^{|v|} \int_{[0,1)^s} J_{\bsk_v}(\bsx_v) f(\bsx)
\rd \bsx,$$ where for $\bsk = (k_1,\ldots, k_s)$ and $\bsx =
(x_1,\ldots, x_s)$ we have $J_{\bsk_v}(\bsx_v) = \prod_{j \in v}
J_{k_j}(x_j)$.

Using the Walsh series expansion of $J_{\bsk}$ given by
Lemma~\ref{lem_intJ} we can now express the Walsh coefficient
$\hat{F}(\bsk)$ as a sum of the Walsh coefficients $\hat{f}(\bsh)$,
from which the result follows.
\end{proof}

The following definition now captures the essence of the decay of
the Walsh coefficients of smooth functions and will be used in the
statement of the subsequent lemmas, theorems and corollaries.

\begin{definition}\label{def_qadicnonincreasing}
Let $k = k(v;a_1,\ldots, a_v) = \kappa_1 q^{a_1-1} + \cdots +
\kappa_v q^{a_v-1}$ with $v \ge 1$, $\kappa_1,\ldots, \kappa_v \in
\{1,\ldots, q-1\}$ and $1 \le a_v < \cdots < a_1$ be a natural
number. For $k = 0$ we set $v = 0$, i.e., $k(0) = 0$. A function
$\BB:\NN_0 \rightarrow \real$ is called $q$-adically non-increasing
if $\BB(k) = \BB(k(v;a_1,\ldots,a_v))$ is non-increasing in $v$ and
each $a_i$ for $i = 1,\ldots, v$, that is, for any $v \ge 0$ we have
$$\BB(k(v;a_1,\ldots, a_v)) \ge \BB(k(v+1;a'_1,\ldots, a'_{v+1}))$$
with $1\le a'_{v+1} < \cdots < a'_{1}$ and $a_1,\ldots, a_v \in
\{a'_1,\ldots, a'_{v+1}\}$ and for an arbitrary $1 \le i \le v$ we
have $$\BB(k(v;a_1,\ldots, a_v)) \ge \BB(k(v;a_1,\ldots, a_{i-1},
a_i+1, a_{i+1}, \ldots, a_v))$$ provided that $a_i+1 < a_{i-1}$ in
case $1 < i \le v$.
\end{definition}

In the following lemma we give a bound on the Walsh coefficients of $F$ if $f$ satisfies some smoothness condition.
\begin{lemma}\label{lem_iteration}
Let $\BB:\NN_0^s \rightarrow [0,\infty)$ be a $q$-adically
non-increasing function in each variable. Let $f\in \LL_2([0,1)^s)$
and let the Walsh coefficients of $f$ satisfy
$$|\hat{f}(\bsk)| \le \BB(\bsk) \quad \mbox{for all } \bsk \in
\NN_0^s.$$

Let $F(\bsx) = \int_{[0,\bsx)} f(\bsy)\rd\bsy$, where $[0,\bsx) =
\prod_{j=1}^s [0,x_j)$ with $\bsx = (x_1,\ldots, x_s)$. Further let
$\hat{F}(\bsk)$ denote the $\bsk$-th Walsh coefficient of $F$. Let
$\bsk = (k_1,\ldots, k_s) \in \NN_0^s \setminus\{\bszero\}$ and let
$U = \{1\le j \le s: k_j \neq 0\}$. For $j\in U$ let $k_j = l_j
q^{a_j-1} + k_j'$ and $0 \le k_j' < q^{a_j-1}$ and further let
$\bsk' = (k'_1,\ldots, k'_s)$ where $k'_j =0$ for $j \notin U$. Then
there is a constant $C_{s,U}
> 0$ independent of $\bsk$ such that
$$|\hat{F}(\bsk)| \le C_{s,U} \; q^{-\sum_{j\in U} a_j}
\BB(\bsk').$$
\end{lemma}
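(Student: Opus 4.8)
The plan is to combine the exact identity for $\hat{F}(\bsk)$ from Lemma~\ref{lem_sumfF} with the assumed pointwise bound $|\hat f(\bsk)|\le \BB(\bsk)$ and the $q$-adic monotonicity of $\BB$. Recall that Lemma~\ref{lem_sumfF} gives
\[
\hat{F}(\bsk) = q^{-\sum_{j\in U} a_j} \sum_{U\subseteq v\subseteq\cS} (-1)^{|v|} \sum_{\bsh_v\in\NN_0^{|v|}} \hat{f}(\bsk' + (\bsh_v,\bszero))\,\chi_{U,v,\bsk}(\bsh_v).
\]
So the whole task reduces to bounding, for each fixed $v$ with $U\subseteq v\subseteq\cS$, the sum $\sum_{\bsh_v}|\hat f(\bsk'+(\bsh_v,\bszero))|\,|\chi_{U,v,\bsk}(\bsh_v)|$, and since $\chi$ factors as $\prod_{j\in U}\rho_{k_j}(h_j)\prod_{j\in v\setminus U}\phi(h_j)$ and $\BB$ is $q$-adically non-increasing \emph{in each variable}, the argument is genuinely one-dimensional: I would prove the $s=1$ estimate and then take products.

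First I would isolate the one-dimensional statement: for a $q$-adically non-increasing $\BB:\NN_0\to[0,\infty)$ and a fixed index $k = l q^{a-1}+k'$ with $0<l<q$, $0\le k'<q^{a-1}$, I claim there is a constant $C$ (depending only on $q$) with $\sum_{h\in\NN_0}\BB(k'+h)\,|\rho_k(h)| \le C\,\BB(k')$, and similarly $\sum_{h}\BB(k'+h)\,|\phi(h)|\le C'\,\BB(k')$ where in the $\phi$-case the relevant ``base'' index is the one with the top digit removed. Here is where the structure of $\rho_k$ and $\phi$ is used: $\rho_k(h)$ is supported on $h = zq^{a-1}$ for $0<z<q$ (contributing $|c_z+2^{-1}1_{z=l}|$, finitely many bounded terms) and on $h = zq^{a-1+i}+lq^{a-1}$ for $i>0$, $0<z<q$ (contributing $|\upsilon_z|q^{-i-1}$). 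In the first family $k'+h = k' + zq^{a-1}$ has one more nonzero $q$-adic digit than $k'$, at position $a$, which is above all digits of $k'$; by Definition~\ref{def_qadicnonincreasing} (the $v\to v+1$ clause) $\BB(k'+zq^{a-1})\le\BB(k')$. In the second family $k'+h$ has \emph{two} more nonzero digits than $k'$ (at positions $a$ and $a+i$), so again by the $v\to v+1$ clause applied twice, $\BB(k'+h)\le\BB(k')$; summing the weights $\sum_{i\ge 1}\sum_{z=1}^{q-1}|\upsilon_z|q^{-i-1}$ is a convergent geometric-type series giving a finite constant. The $\phi$ case is analogous and slightly simpler. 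Thus each one-dimensional sum is $\le C\,\BB(k')$ with $C = C(q)$, and the factorized multivariate sum over $\bsh_v$ is bounded by $C(q)^{|v|}\,\BB(\bsk')$ (using that the coordinates in $\cS\setminus v$ of $\bsk'$ are $0$ and $\BB$'s monotonicity lets us insert them harmlessly). Summing over the at most $2^s$ choices of $v\supseteq U$ and pulling out the prefactor $q^{-\sum_{j\in U}a_j}$ gives $|\hat F(\bsk)|\le C_{s,U}\,q^{-\sum_{j\in U}a_j}\BB(\bsk')$ with $C_{s,U} = 2^{s-|U|}C(q)^s$ or similar.

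The one subtlety — and the step I expect to need the most care — is the monotonicity bookkeeping: when I write $k' + zq^{a-1}$ I must check that inserting the digit $\kappa=z$ at position $a$ genuinely produces an index of the form $k(v+1;a'_1,\dots,a'_{v+1})$ covered by Definition~\ref{def_qadicnonincreasing}, i.e.\ that position $a$ is strictly above every position occurring in $k'$ (true since $0\le k'<q^{a-1}$) so there is no digit collision, and likewise that $k'+zq^{a-1+i}+lq^{a-1}$ adds two fresh digits at positions $a$ and $a+i>a$ with no collision with each other or with $k'$. Once that is verified the bound is $\BB(k'+h)\le\BB(k')$ directly (the ``non-increasing in $v$'' clause, not even the ``non-increasing in each $a_i$'' clause, is what is needed). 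I would also remark that $\upsilon_z$ is bounded uniformly in $z\in\{1,\dots,q-1\}$ (for prime $q$ with identity bijections, $\upsilon_z = q(\de^{2\pi\icomp z/q}-1)^{-1}$, hence $|\upsilon_z|\le q/|\de^{2\pi\icomp/q}-1|$; in general it is a fixed finite list of numbers), and that $|c_z|$ are likewise fixed constants, so that $C(q)$ is a genuine constant. Assembling these pieces yields the claimed estimate.
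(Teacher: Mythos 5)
Your proposal is correct and follows essentially the same route as the paper: apply Lemma~\ref{lem_sumfF}, use the $q$-adic monotonicity of $\BB$ to replace $\BB(\bsk'+(\bsh_v,\bszero))$ by $\BB(\bsk')$ on the support of $\chi_{U,v,\bsk}$ (the paper does this in one step for the whole sum, you factor it coordinate-wise, which is the same computation), and then bound $\sum_{\bsh_v}|\chi_{U,v,\bsk}(\bsh_v)|$ by a constant depending only on $q$, $s$ and $U$ via the convergent geometric sums of $|\rho_{k_j}|$ and $|\phi|$. Your extra care about digit collisions (that the new nonzero digits sit at positions $a$ and $a+i$ strictly above all digits of $k'$) is a point the paper leaves implicit, and your remark that the $z=0$ term of $\rho_{k_j}$ is the harmless case $h=0$ would close the only loose end in your write-up.
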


\begin{proof}
Using Lemma~\ref{lem_sumfF} we obtain that $$|\hat{F}(\bsk)| \le
q^{-\sum_{j\in U} a_j} \BB(\bsk') \sum_{U\subseteq v \subseteq \cS}
\sum_{\bsh_v \in \NN_0^{|v|}} |\chi_{U,v,\bsk}(\bsh_v)|,$$ as
$|\hat{f}(\bsk'+(\bsh_v,\bszero))| \le \BB(\bsk'+(\bsh_v,\bszero))
\le \BB(\bsk')$ for all values of $\bsh_v \in \NN_0^{|v|}$ for which
$\chi_{U,v,\bsk}(\bsh_v) \neq 0$, since $\BB$ is $q$-adically
non-increasing in each variable.

Thus it remains to bound $\sum_{U\subseteq v \subseteq \cS}
\sum_{\bsh_v \in \NN_0^{|v|}} |\chi_{U,v,\bsk}(\bsh_v)|$
independently of $\bsk$. We only prove the case where $q$ is chosen
to be a prime number and the bijections $\varphi$ and $\eta$ are
chosen to be the identity, as in this case we can obtain an explicit
constant $C_{s,U} > 0$. The general case can be obtained by similar
arguments using the result from Lemma~\ref{lem_sumfF}.

Using the notation from Lemma~\ref{lem_sumfF} we have
\begin{eqnarray*}
\sum_{h\in\NN_0} |\rho_{k_j}(h)| & = & |1-\omega_q^{-l_j}|^{-1} + 2^{-1} |1+\omega_q^{-l_j}| |\omega_q^{-l_j}-1|^{-1} \\ &&  + \sum_{i=1}^\infty q^{-i} \sum_{z=1}^{q-1} |\de^{2\pi\icomp z/q}-1|^{-1} \\
& \le & 3 (2-2\cos(2\pi/q))^{-1/2}
\end{eqnarray*}
and
$$\sum_{h=0}^\infty |\phi(h)|  = 2^{-1} + \sum_{i=1}^\infty q^{-i} \sum_{z=1}^{q-1} |\de^{2\pi\icomp z/q}-1|^{-1} \le 2^{-1} + (2-2\cos(2\pi/q))^{-1/2}.$$
Therefore we have
\begin{eqnarray*}
\lefteqn{ \sum_{U\subseteq v \subseteq S} \sum_{\bsh_v \in \NN_0^{|v|}} |\chi_{U,v,\bsk}(\bsh_v)| } \\ & \le & 3^{|U|} (2-2\cos(2\pi/q))^{-|U|/2} (3/2+(2-2\cos(2\pi/q))^{-1/2})^{s-|U|}
\end{eqnarray*}
and hence we can choose
\begin{equation}\label{eq_constcsu}
C_{s,U} = 3^{|U|} (2-2\cos(2\pi/q))^{-|U|/2} (3/2+(2-2\cos(2\pi/q))^{-1/2})^{s-|U|}
\end{equation}
in Lemma~\ref{lem_iteration} for this case.
\end{proof}

We use the above results now to establish an upper bound on the Walsh coefficients of a polynomial. The proof will give a glimpse on how the argument will work for more general function classes.
\begin{lemma}\label{lem_walshpoly}
Let $k = \kappa_1 q^{a_1-1} + \cdots + \kappa_v q^{a_v-1}$ with $v\ge 1$, $\kappa_1,\ldots, \kappa_v \in \{1,\ldots, q-1\}$ and $1 \le a_v < \cdots < a_1$. For $v = 0$ let $k = 0$. Let $f:[0,1)\rightarrow \real$ be the polynomial $f(x) = f_0 + f_1 x + \cdots + f_i x^i$ with $f_i \neq 0$ and let $\hat{f}(k)$ denote the $k$-th Walsh coefficient of $f$. Then for $v \ge 0$ there are constants $0 < C_{f,i,v} < \infty$ such that $$|\hat{f}(k)| \le C_{f,i,v} q^{-a_1 - \cdots - a_v},$$ where we can choose $C_{f,i,v} = 0$ for $v > i$.
\end{lemma}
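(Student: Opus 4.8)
The plan is to argue by induction on the degree $i$ of the polynomial, using Lemma~\ref{lem_iteration} (the "integration step") as the inductive engine and Lemma~\ref{lem_proppirs} as the base step. First I would reduce to the case of a monomial: since the Walsh coefficient is linear in $f$, it suffices to bound $|\hat{g_i}(k)|$ where $g_i(x) = x^i$, as $\hat{f}(k) = \sum_{t=0}^i f_t \hat{g_t}(k)$ and we can absorb the coefficients $f_0,\dots,f_i$ into the final constant $C_{f,i,v}$; the claim $C_{f,i,v}=0$ for $v>i$ will follow once we show $\hat{g_t}(k)=0$ whenever $k=k(v;a_1,\dots,a_v)$ with $v>t$.

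For the base case $i=0$, $g_0\equiv 1$, so $\hat{g_0}(k)=0$ for all $k>0$ and $\hat{g_0}(0)=1$; this matches the claim with $v=0$ and gives $C_{g_0,0,v}=0$ for $v\ge 1$. For $i=1$, $g_1(x)=x$, and the explicit Walsh expansion already recorded in the excerpt (Equation~\eqref{eq_xwalsh}, $y = 1/2 + \sum_{c=1}^\infty\sum_{\kappa=1}^{q-1} q^{-c-1}\upsilon_\kappa\,\overline{\wal_{\kappa q^{c-1}}(y)}$) shows directly that $\hat{g_1}(k)$ is nonzero only when $k = \kappa q^{a_1-1}$ for a single digit, i.e. $v\le 1$, and in that case $|\hat{g_1}(k)| = q^{-a_1-1}|\upsilon_\kappa| \le C\,q^{-a_1}$; so the bound holds with the stated decay and $C_{g_1,1,v}=0$ for $v\ge 2$. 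For the inductive step, observe that $g_{i}(x) = x^i = i\int_0^x y^{i-1}\rd y$, so $g_i = i\cdot F$ where $F(x)=\int_0^x g_{i-1}(y)\rd y$. By the inductive hypothesis applied to $f=g_{i-1}$, the function $\BB(\bsk')$ majorizing $|\hat{g_{i-1}}(\bsk')|$ can be taken to be (a constant times) the $q$-adically non-increasing function $k(v';b_1,\dots,b_{v'}) \mapsto q^{-b_1-\cdots-b_{v'}}$ for $v'\le i-1$ and $0$ for $v'\ge i$; indeed $q^{-b_1-\cdots-b_{v'}}$ is manifestly non-increasing in $v'$ and in each $b_j$, so Definition~\ref{def_qadicnonincreasing} is satisfied. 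Then Lemma~\ref{lem_iteration} (in dimension $s=1$) yields, for $k = l q^{a-1}+k'$ with $0\le k'<q^{a-1}$, the bound $|\hat F(k)| \le C\, q^{-a}\BB(k')$. Writing $k=k(v;a_1,\dots,a_v)$ with $a=a_1$, the "lower-order part" is $k' = k(v-1;a_2,\dots,a_v)$, which has $v-1$ digits; by induction $\BB(k')$ is a constant times $q^{-a_2-\cdots-a_v}$ when $v-1\le i-1$ and $0$ when $v-1\ge i$. Multiplying by $q^{-a_1}$ gives $|\hat{g_i}(k)|\le C\,q^{-a_1-a_2-\cdots-a_v}$ for $v\le i$ and $0$ for $v\ge i+1$, which is exactly the claimed estimate.

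The main technical point I expect to need care with is the interface between Lemma~\ref{lem_iteration} and Definition~\ref{def_qadicnonincreasing}: Lemma~\ref{lem_iteration} requires a genuinely $q$-adically non-increasing majorant $\BB$, not merely a pointwise bound on the coefficients, so I must verify that the function $k(v;b_1,\dots,b_v)\mapsto q^{-(b_1+\cdots+b_v)}$ (extended by monotone interpolation, and set to $0$ past $v=i-1$ at stage $i-1$) really does satisfy both monotonicity conditions in Definition~\ref{def_qadicnonincreasing} — adding a new digit multiplies by a factor $q^{-b}\le 1$, and incrementing some $b_i$ multiplies by $q^{-1}<1$, so both hold, but the vanishing-past-$v=i-1$ truncation must be checked to remain non-increasing (it is, since the threshold is on $v$ and decreasing $v$ only helps). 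A secondary bookkeeping issue is identifying the exact "lower part" $k'$ of $k$ in the digit representation: with $k=k(v;a_1,\dots,a_v)$ and $a:=a_1$ the leading digit position, one has $k' = k - \kappa_1 q^{a_1-1} = k(v-1;a_2,\dots,a_v)$, and one uses $1\le a_v<\cdots<a_1$ to guarantee $0\le k'<q^{a_1-1}$. Beyond these structural checks the argument is routine: track the multiplicative constant $C_{g_i,i,v}$ through the induction (it grows by a bounded factor from Lemma~\ref{lem_iteration} at each step), and reassemble the polynomial from its monomials at the end.
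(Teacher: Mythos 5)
Your proof is correct and follows essentially the same strategy as the paper: an induction whose engine is Lemma~\ref{lem_iteration} applied to successive antiderivatives, with the key verification that the majorant $q^{-b_1-\cdots-b_{v'}}$, truncated to zero past the degree threshold, is $q$-adically non-increasing. The only difference is organizational --- the paper integrates up the derivative chain $f^{(i)}, f^{(i-1)},\ldots,f$ of the whole polynomial rather than first splitting $f$ into monomials, but the two are equivalent by linearity.
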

\begin{proof}
Let $f(x) = f_0 + f_1 x + \cdots + f_i x^i$, where $i = \deg(f)$
(that is, $f_i \neq 0$). Then we have $f^{(i)}(x) = i! f_i \neq 0$.
As $f^{(i)}$ is a constant function, its Walsh series representation
is simply given by $f^{(i)}(x) = i! f_i$. Now we use
Lemma~\ref{lem_iteration}. The dimension $s$ in our case is $1$ and
we can choose the function $\BB_1$ by $\BB_1(0) = i!|f_i|$ and for
$k > 0$ we set $\BB_1(k) = 0$. Note the function $\BB_1$ defined
this way is a $q$-adically non-increasing function. Then it follows
that there is a constant $C_1 > 0$ such that the Walsh coefficients
of the function $\int_0^x f^{(i)}(t) \rd t = f^{(i-1)}(x) -
f^{(i-1)}(0)$ are bounded by $C_1 q^{-a_1} i!|f_i|$ for all $k$
where $v = 1$ and the Walsh coefficients are $0$ for $v > 1$. The
Walsh coefficient for $k = 0$ is given by $f^{(i-2)}(1) -
f^{(i-2)}(0) - f^{(i-1)}(0)$.

Now consider the function $\int_0^x f^{(i-1)}(t)\rd t = f^{(i-2)}(x) - f^{(i-2)}(0)$. It follows from the above and Lemma~\ref{lem_iteration} that the Walsh coefficients of $ f^{(i-2)}(x) - f^{(i-2)}(0)$ can be bounded by a $q$-adically non-increasing function $\BB_2$. Indeed there are constants $C_2,C_3 > 0$ such that we can choose $\BB_2(0) = |f^{(i-2)}(1) - f^{(i-2)}(0) - f^{(i-1)}(0)|$, $\BB_2(k) =  C_{2} q^{-a_1}$ for $v = 1$, $\BB_2(k) = C_3 q^{-a_1-a_2} $ for $v = 2$ and $\BB_2(k) = 0$ for $v > 2$. Again $\BB_2$ is a $q$-adically non-increasing function and Lemma~\ref{lem_iteration} can again be used.

By using the above argument iteratively we obtain that there is a constant $C > 0$ such that $|\hat{f}(k)| \le C q^{-a_1 - \cdots - a_v}$, for $k =\kappa_1 q^{a_1-1} + \cdots + \kappa_v q^{a_v-1}$ with $\kappa_1,\ldots, \kappa_v \in \{1,\ldots, q-1\}$ and $1 \le a_v < \cdots < a_1$. The result thus follows.
\end{proof}

For the case where $q$ is chosen to be a prime number and the bijections $\varphi$ and $\eta$ are chosen to be the identity and for $0 \le v \le i$ we can choose
\begin{equation}\label{eq_constcfv}
C_{f,i,v} = \bar{C}^v \sum_{l = v}^i C'^{i-l}  l! |f_l|,
\end{equation}
where $\bar{C} = (2-2\cos(2\pi/q))^{-1/2}$ and $C' = 3/2 + (2-2\cos(2\pi/q))^{-1/2}$ in Lemma~\ref{lem_walshpoly}.

Let $f:[0,1)^s \rightarrow \real$ be such that the partial mixed
derivatives up to order $\delta\ge 1$ in each variable exist and are
continuous. We need some further notation: let $\bstau =
(\tau_1,\ldots, \tau_s)$ and $$f^{(\bstau)}(\bsx) =
\frac{\partial^{\tau_1+\cdots + \tau_s}}{\partial x_1^{\tau_1}
\cdots \partial x_s^{\tau_s}} f(\bsx).$$ For $\bstau \in \{0,\ldots,
\delta\}^s$ let $u(\bstau) = \{1\le j \le s: \tau_j = \delta\}$. Let
$\bsgamma = (\gamma_v)_{v\subset \NN}$ be an indexed set of
non-negative real numbers. Let $v(\bstau) = \{1\le j \le s: \tau_j >
0\}$. Then for $0 < \lambda \le 1$ and $\pp,\qq,\rr \ge 1$
($\pp,\qq,\rr$ do not appear in the subscript of $N$ as they do not
have influence on our subsequent bounds, we only assume that they
are bigger or equal to $1$) we define
\begin{equation}\label{def_N}
N_{\delta, \lambda,\bsgamma}(f) = \left(\sum_{\bstau \in \{0,\ldots,
\delta\}^s} \gamma_{v(\bstau)}^{-1}
\left[V_{\lambda,\pp,\qq,\bsone}^{(|u(\bstau)|)}(f^{(\bstau)}(\cdot,
\bszero_{\cS\setminus u(\bstau)}))\right]^\rr \right)^{1/\rr},
\end{equation}
where, for clarity, we introduce the additional superscript
$(|u(\bstau)|)$ in the Hardy and Krause variation
$V_{\lambda,\pp,\qq,\bsone}^{(|u(\bstau)|)}$ which indicates the
dimension of the function and where for $u(\bstau) = \emptyset$ we
set $V_{\lambda,\pp,\qq,\bsone}^{(|u(\bstau)|}(f^{(\bstau)}(\cdot,
\bszero_{\cS\setminus u(\bstau)})) = |f^{(\bstau)}(\bszero)|$.

The weights $\bsgamma$ are introduced to modify the importance of
various coordinate projections and were first introduced in
\cite{SW98}, see also \cite{DSWW1,DSWW2}. If for some $v' \subseteq
\cS$ the weight $\gamma_{v'} = 0$, then we assume that the function
$f$ satisfies
$V_{\lambda,\pp,\qq,\bsone}^{(|v'|)}(f^{(\bstau)}(\cdot,
\bszero_{\cS\setminus v'})) = 0$ for all $\bstau \in \{0,\ldots,
\delta\}^s$ with $v(\bstau) = v'$ and in (\ref{def_N}) we formally
set $0/0 = 0$.

The parameters in the definition of $N_{\delta, \lambda,\bsgamma}$
have the following meaning:
\begin{itemize}
\item $\delta$ denotes the order of partial derivatives of $f$ required in order for $N_{\delta,
\lambda,\bsgamma}(f)$ to make sense;
\item $\lambda$ is a H\"older type parameter or fractional order type
parameter of the generalized Hardy and Krause variation; roughly,
$f$ needs to have partial derivatives up to order $\delta +
\lambda$, where for $0 < \lambda < 1$ this means some type of
fractional smoothness or in dimension one a H\"older condition of
order $\lambda$;
\item the Vitali variation is in $\pp$ norm;
\item $\qq$ is the norm in the summation of the generalized Hardy and Krause
variation;
\item $\rr$ is the norm in the summation over the $\bstau$;
\item $\bsgamma$ are the weights which regulate the importance of different coordinate projections;
\end{itemize}

Note that for $\lambda = 1$ and $\pp = \qq = \rr = 2$ the functional
$N_{\delta, \lambda,\bsgamma}$ is just the norm in a weighted
reproducing kernel Sobolev space with continuous partial mixed
derivatives up to order $\delta + 1$ in each variable. In one
dimension the unweighted norm in this reproducing kernel Sobolev
space is given by
\begin{eqnarray}\label{eq_ipsob1}
\lefteqn{\langle f, g \rangle_{\soban,\delta+1} } \\ &=& f(0) g(0) + \cdots + f^{(\delta-1)}(0) g^{(\delta-1)}(0) \nonumber \\ &&  + \int_0^1 f^{(\delta)}(x) \rd x \int_0^1 g^{(\delta)}(x)\rd x + \int_0^1 f^{(\delta+1)}(x) g^{(\delta+1)}(x) \rd x \nonumber
\end{eqnarray}
and for higher dimensions one just takes the weighted tensor product
of the one dimensional reproducing kernel Sobolev spaces (see
\cite{DSWW2} for examples of weighted tensor product reproducing
kernel Sobolev spaces). Let the $s$ dimensional weighted inner
product be denoted by $\langle \cdot, \cdot
\rangle_{\soban,s,\delta+1, \bsgamma}$ and the corresponding norm
$\|\cdot\|_{\soban,s,\delta+1,\bsgamma}$ (indeed if the partial
mixed derivatives up to order $\delta+1$ of $f$ are continuous on
$[0,1]^s$ then we have $\|f\|_{\soban,s,\delta+1,\bsgamma} =
N_{\delta, 1,\bsgamma}(f)$).

In the following we define a function $\mu$ which will be used
throughout the paper: let $\delta \ge 1$ be an integer and $0 <
\lambda \le 1$ be a real number. Then for $\bsk = (k_1,\ldots, k_s)$
we set
\begin{equation}\label{defmus} \mu_{q,\delta+\lambda}(\bsk) = \sum_{j=1}^s
\mu_{q,\delta+\lambda}(k_j)\end{equation} with
\begin{equation}\label{defmu} \mu_{q,\delta + \lambda}(k) =
\left\{\begin{array}{ll} 0 & \mbox{for } k = 0,
\\ a_1 + \cdots + a_v & \mbox{for } v \le \delta, \\  a_1 + \cdots + a_{\delta} +  \lambda a_{\delta +1} & \mbox{for } v > \delta,
\end{array} \right.
\end{equation}
where for $k \in \NN$ we write  $k = \kappa_1 q^{a_1 -1} + \cdots +
\kappa_v q^{a_v-1}$ with $v \ge 1$, $\kappa_1,\ldots, \kappa_v \in
\{1,\ldots, q-1\}$ and $1 \le a_v < \cdots < a_1$.

\begin{theorem}\label{th_boundwalshcoeff}
Let $\delta \ge 1$ be an integer, $0 < \lambda \le 1$, $\pp, \qq,
\rr \ge 1$ be real numbers and an indexed set $\bsgamma =
(\gamma_v)_{v\subset\NN}$ of non-negative real numbers be given. Let
$f:[0,1)^s \rightarrow \real$ be such that the partial mixed
derivatives up to order $\delta$ in each variable exist and such
that $N_{\delta,\lambda,\bsgamma}(f) < \infty$. Then for any $\bsk
\in \NN_0^s\setminus\{\bszero\}$ it follows that there is a constant
$C_{f,q,s,\bsgamma} >0$ independent of $\bsk$ such that
$$|\hat{f}(\bsk)| \le C_{f,q,s,\bsgamma}
q^{-\mu_{q,\delta+\lambda}(\bsk)}.$$
\end{theorem}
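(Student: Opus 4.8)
The strategy is an induction on $\delta$ using the one-dimensional integration-by-parts machinery of Lemmas~\ref{lem_intJ}, \ref{lem_sumfF} and \ref{lem_iteration}, combined with the base-case bound for functions of finite variation from Lemma~\ref{lem_proppirs}, exactly mimicking the polynomial computation carried out in Lemma~\ref{lem_walshpoly} but now in $s$ dimensions and with the generalized variation $N_{\delta,\lambda,\bsgamma}$ replacing the crude constant $i!|f_i|$. The key bookkeeping object is the function $\mu_{q,\delta+\lambda}$ defined in \eqref{defmu}: I would first record the structural fact that, for fixed $j$ and fixed $\bsk'$ obtained from $\bsk$ by deleting the top nonzero digit $l_j q^{a_j-1}$ of $k_j$, one has $\mu_{q,\delta+\lambda}(\bsk) = a_j + \mu_{q,\delta-1+\lambda}(\bsk')$ when $v(k_j)\le\delta$, and that the truncated functions $\bsk\mapsto q^{-\mu_{q,\delta'+\lambda}(\bsk)}$ are $q$-adically non-increasing in each variable (Definition~\ref{def_qadicnonincreasing}) for every $0\le\delta'\le\delta$. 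This is the arithmetic heart of the matter and the step I expect to be the main obstacle: one must check that stripping a leading digit, or increasing one of the exponents $a_i$, decreases $\mu$ by the correct amount in each of the regimes $v\le\delta$ and $v>\delta$, including the delicate boundary case where the number of nonzero digits crosses the threshold $\delta$ and the weight shifts from $1$ to $\lambda$ on $a_{\delta+1}$.

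With that in hand I would proceed as follows. By the weighted tensor-product structure of $N_{\delta,\lambda,\bsgamma}$ it suffices, after the standard reduction of replacing $f$ by the projection $f_u$ onto the coordinates $u=\{j:k_j\ne 0\}$ and absorbing the weight $\gamma_{v(\bstau)}^{-1}$, to treat a coordinate direction $j\in u$ and decompose $k_j = l_j q^{a_j-1}+k_j'$. Write $F=f^{(\bstau)}$ for a multi-index $\bstau$ with $\tau_j=\delta$ on $u$; then $f$ itself is obtained from $F$ by $\delta$ successive integrations $F\mapsto \int_0^{x_j}$ in each coordinate of $u$. Applying Lemma~\ref{lem_iteration} once for each such integration — at the $r$-th stage the Walsh coefficients are bounded by the $q$-adically non-increasing function $C\,q^{-\mu_{q,r+\lambda}(\cdot)}\,N_{\delta,\lambda,\bsgamma}(f)$, and the lemma converts this into the bound $C'\,q^{-a_j}q^{-\mu_{q,r-1+\lambda}(\cdot')}N_{\delta,\lambda,\bsgamma}(f)$ for the integrated function, exactly as in the inductive step of Lemma~\ref{lem_walshpoly} — after $\delta$ iterations one arrives at a bound of the form $C_{f,q,s,\bsgamma}\,q^{-\sum_{j\in u} a_j}\,q^{-\lambda\sum(\text{next exponents})}$, which by the $\mu$ identity above is precisely $C_{f,q,s,\bsgamma}\,q^{-\mu_{q,\delta+\lambda}(\bsk)}$.

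The base case of the iteration is where Lemma~\ref{lem_proppirs} enters: the function $F=f^{(\bstau)}(\cdot,\bszero_{\cS\setminus u})$ has finite generalized variation $V_{\lambda,1,1,\bsone}^{(|u|)}(F)\le V_{\lambda,\pp,\qq,\bsone}^{(|u|)}(F)$ (using the inequalities $V_{\lambda,1}\le V_{\lambda,\pp}$ established earlier and monotonicity of $\ell^\qq$ norms), so Lemma~\ref{lem_proppirs} gives $|\hat F(\bsm)|\le q^{|u|-\lambda\sum_{j\in u}(a_j-1)}V_{\lambda,1}^{(|u|)}(F_u;u)$, i.e.\ a bound of the shape $C\,q^{-\lambda\sum_{j\in u}a_j}$ up to the constant factor $q^{|u|+\lambda|u|}$; this is exactly $C\,q^{-\mu_{q,\lambda}(\bsm)}$ restricted to $\bsm$ whose nonzero coordinates have a single digit (the contribution of higher-order digits of $\bsm$ can only help, and is handled by the $q$-adic monotonicity), and one verifies it extends to a $q$-adically non-increasing majorant on all of $\NN_0^s$. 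Feeding this majorant into the $\delta$-fold iteration above, and tracking the constants — which, in the prime/identity case, can be made explicit via \eqref{eq_constcsu} and the geometric series in \eqref{eq_constcfv}, yielding $C_{f,q,s,\bsgamma}$ as a product over $j\in u$ of factors $(2-2\cos(2\pi/q))^{-1/2}$-type bounds times $N_{\delta,\lambda,\bsgamma}(f)$ — completes the proof. The one subtlety to handle with care beyond the $\mu$-arithmetic is that at each integration step the "constant term" (the $k=0$ Walsh coefficient of the integrated function, cf.\ \eqref{eq_indstep0}) absorbs boundary values like $f^{(\bstau')}(\bszero)$; these are exactly the terms $V^{(|\emptyset|)}(\cdot)=|f^{(\bstau)}(\bszero)|$ and the lower-dimensional face variations that appear in the definition \eqref{def_N} of $N_{\delta,\lambda,\bsgamma}$, so the sum over all $\bstau\in\{0,\ldots,\delta\}^s$ and all faces $u(\bstau)$ in that definition is precisely what is needed to close the recursion.
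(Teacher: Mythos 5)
Your proposal is correct and follows essentially the same route as the paper's proof: the paper likewise bounds the Walsh coefficients of the top derivative $f^{(\bsdelta_u,\bstau_{\cS\setminus u})}$ via Lemma~\ref{lem_proppirs}, lifts that bound back through $\delta$ applications of Lemma~\ref{lem_iteration} using the $q$-adic monotonicity of the majorants, and disposes of the accumulated boundary/polynomial terms with Lemma~\ref{lem_walshpoly}. The only organizational difference is that the paper performs all the integrations at once through the multivariate Taylor formula with integral remainder, so that the remainder terms $G_u$ vanish together with their lower-order derivatives on the faces $x_j=0$ and the iteration of Lemma~\ref{lem_iteration} runs without boundary terms, whereas you track the boundary contributions step by step --- the same bookkeeping in a different order.
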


\begin{proof}
In order to prove the result we use the Taylor series expansion of the function $f$. We have
\begin{eqnarray}\label{eq_taylor}
f(\bsx) &=& \sum_{\bstau \in \{0,\ldots, \delta-1\}^s}
\frac{\bsx^{\bstau}}{\bstau!} f^{(\bstau)}(\bszero)  +
\sum_{\emptyset\neq u \subseteq\cS}\sum_{\bstau_{\cS\setminus u} \in
\{0,\ldots,\delta-1\}^{s-|u|}}  ((\delta-1)!)^{-|u|} \nonumber \\ &&
\frac{\prod_{j\in\cS\setminus u}
x_j^{\tau_j}}{\prod_{j\in\cS\setminus u} \tau_j!}
\int_{[\bszero_u,\bsx_u)} f^{(\bsdelta_u, \bstau_{\cS\setminus
u})}(\bsy_u, \bszero_{\cS\setminus u}) \prod_{j\in u}
(x_j-y_j)^{\delta-1} \rd \bsy_u.
\end{eqnarray}

First note that the first sum in (\ref{eq_taylor}) is a polynomial
in $\bsx$ and therefore the Walsh coefficients of this polynomial
satisfy the desired bound by Lemma~\ref{lem_walshpoly}.

Now we consider the second sum. Let $\emptyset \neq u \subseteq \cS$
with $u = \{j_1,\ldots, j_{|u|}\}$ be given. Then for $j \notin u$
the Walsh coefficients satisfy the desired bound by
Lemma~\ref{lem_walshpoly}. Hence it remains to consider the Walsh
coefficients of
\begin{eqnarray*}
G_u(\bsx_u)  &=& \int_{[\bszero_u,\bsx_u)} f^{(\bsdelta_u,
\bstau_{\cS\setminus u})}(\bsy_u, \bszero_{\cS\setminus u})
\prod_{j\in u} (x_j-y_j)^{\delta-1} \rd \bsy_u \\ & = &
\int_{[\bszero_u,\bsone_u)} f^{(\bsdelta_u, \bstau_{\cS\setminus
u})}(\bsy_u, \bszero_{\cS\setminus u}) \prod_{j\in u}
(x_j-y_j)_+^{\delta-1} \rd \bsy_u,
\end{eqnarray*}
where $(x-y)_+ = \max(0, x-y)$.

By differentiating the function $G_u$ in each variable $0 \le k < \delta$ times we obtain (see \cite[pp. 153,154]{SB})
\begin{eqnarray*}
\lefteqn{\frac{\partial^{k|u|}}{\partial \bsx_u^{k}} G_u(\bsx_u) }
\\ & =& \left(\frac{(\delta-1)!}{(\delta-1-k)!}\right)^{|u|}
\int_{[\bszero_u,\bsx_u)}  f^{(\bsdelta_u, \bstau_{S\setminus
u})}(\bsy_u, \bszero_{S\setminus u}) \prod_{j\in u}
(x_j-y_j)^{\delta-1-k} \rd \bsy_u.
\end{eqnarray*}
Hence $\frac{\partial^{k|u|}}{\partial \bsx_u^{k}} G_u(\bsx_u) = 0$ if there is at least one $j \in u$ such that $x_j = 0$. Further we have $$\frac{\partial^{\delta|u|}}{\partial \bsx_u^{\delta}} G_u(\bsx_u)  =  ((\delta-1)!)^{|u|}  f^{(\bsdelta_u, \bstau_{S\setminus u})}(\bsx_u, \bszero_{S\setminus u}).$$

From $N_{\delta,\lambda,\bsgamma}(f) < \infty$ it follows that the
Walsh coefficients of $f^{(\bsdelta_u,\bstau_{\cS\setminus
u})}(\bsx_u,\bszero_{\cS\setminus u})$ decay with order
$\mu_{q,0+\lambda}(k)$ in each variable. Further we have
$$G_u(\bsx_u) = \int_{[\bszero,\bsx_u)} \int_{[\bszero,\bsy_1)}
\cdots \int_{[\bszero,\bsy_{\delta-1})}
G_u^{(\bsdelta_u)}(\bsy_{\delta}) \rd\bsy_{\delta} \cdots \rd
\bsy_1$$ as the function $G_u$ and its derivatives are $0$ if at
least one $x_j = 0$ for $j \in u$, i.e., we have
$$\int_{[\bszero,\bsx_u)} G_u^{(\bstau)}(\bsy)\rd\bsy =
\sum_{v\subseteq u} (-1)^{u\setminus v}
G_u^{(\bstau-\bsone)}(\bsx_v,\bszero_{u\setminus v}) =
G_u^{(\bstau-\bsone)}(\bsx_u).$$ Hence it follows by repeated use of
Lemma~\ref{lem_iteration} that the desired bound holds for $G_u$ and
thus the result follows from (\ref{eq_taylor}).
\end{proof}

For the case where $q$ is chosen to be a prime number and the bijections $\varphi$ and $\eta$ are chosen to be the identity we can also obtain an explicit constant in Theorem~\ref{th_boundwalshcoeff}. Indeed, using Lemma~\ref{lem_proppirs}, Lemma~\ref{lem_iteration} together with the explicit constant (\ref{eq_constcsu}) and Lemma~\ref{lem_walshpoly} together with the explicit constant (\ref{eq_constcfv}) we obtain that the constant $C_{q,\bsgamma}$ can be chosen as
\begin{eqnarray*}
C_{f,q,s,\bsgamma} &=&  \sum_{\bstau \in \{0,\ldots, \delta-1\}^s\atop \gamma_{v(\bstau)} \neq 0} |f^{(\bstau)}(\bszero)| \hat{C}^{\tau_1 + \cdots + \tau_s} + \sum_{\emptyset \neq u \subseteq \cS} q^{|u|} C_{s,u}^\delta \\ && \sum_{\bstau_{\cS\setminus u} \in \{0,\ldots,\delta-1\}^{s-|u|} \atop \gamma_{u\cup v(\bstau_{\cS\setminus u})} \neq 0} \!\!\! \hat{C}^{\sum_{j \in \cS\setminus u} \tau_j}  V^{(|u|)}_{\lambda,1}(f^{(\bsdelta_u,\bstau_{\cS\setminus u})}(\cdot, \bszero_{\cS\setminus u})), \nonumber
\end{eqnarray*}
where $\hat{C} = 1$ for $2 \le q < 6$ and $\hat{C} = (2-2\cos(2\pi/q))^{-1/2}$ for $q > 6$ (note that for $q > 6$ we have $\hat{C} > 1$) and $$C_{s,u} = 3^{|u|} (2-2\cos(2\pi/q))^{-|u|/2} (3/2+(2-2\cos(2\pi/q))^{-1/2})^{s-|u|}.$$ As noted above, under certain conditions we can write $V^{(|u|)}_{\lambda,1}$ also as an integral, see (\ref{eq_formelV}).

We give a further useful estimation of the constant by separating the dependence of the function from the constants. This way we obtain
\begin{equation*}
C_{f,q,s,\bsgamma} \le C_{\delta,q,s,\bsgamma}
N_{\delta,\lambda,\bsgamma}(f),
\end{equation*}
where
\begin{eqnarray}\label{eq_constcn}
C_{\delta,q,s,\bsgamma} &=&  \sum_{\bstau \in \{0,\ldots, \delta-1\}^s} \gamma_{v(\bstau)} \hat{C}^{\tau_1 + \cdots + \tau_s} \nonumber \\ && + \sum_{\emptyset \neq u \subseteq \cS} q^{|u|} C_{s,u}^\delta \sum_{\bstau_{\cS\setminus u} \in \{0,\ldots,\delta-1\}^{s-|u|}} \gamma_{u\cup v(\bstau_{\cS\setminus u})} \hat{C}^{\sum_{j \in \cS\setminus u} \tau_j}.
\end{eqnarray}
Consider now the case where the weights are of product form (see \cite{DSWW2}), i.e., there is a sequence of positive real numbers $(\gamma_j)_{j\in\NN}$ such that $\gamma_v = \prod_{j\in v} \gamma_j$ for all $v \subset \NN$ and for $v = \emptyset$ we set $\gamma_v = 1$. If now $q$ is prime with $2 \le q < 6$, then
\begin{eqnarray*}
C_{\delta,q,s,\bsgamma} & = & \prod_{j=1}^s (1 + \gamma_j (\delta-1)) + \prod_{j=1}^s \left[(1+\gamma_j(\delta-1)) (3/2+\hat{C}) + \gamma_j 3 q \hat{C}\right] \\ && - \prod_{j=1}^s \left[(1+\gamma_j(\delta-1))(3/2+\hat{C})\right].
\end{eqnarray*}
For example for $q = 2$, $\delta = 1$ and product weights we obtain $$C_{1,2,s,\bsgamma} = 1 - 2^s + 2^s \prod_{j=1}^s (1 + 6 \gamma_j).$$

The approach used here for prime $q$ and $\varphi$ the identity map can also be used for arbitrary prime powers $q$ and arbitrary mappings $\varphi$ with $\varphi(0) = 0$. Hence we obtain the following corollary.

\begin{corollary}\label{cor_boundwalshcoeff}
Under the assumptions of Theorem~\ref{th_boundwalshcoeff} there
exists a constant $C_{\delta,q,s,\bsgamma} > 0$ independent of
$\bsk$ and $f$ such that  $$|\hat{f}(\bsk)| \le C_{\delta,
q,s,\bsgamma} N_{\delta,\lambda,\bsgamma}(f)
q^{-\mu_{q,\delta+\lambda}(\bsk)} \quad \mbox{ for all } \bsk \in
\NN_0^s,$$ where $\mu_{q,\delta+\lambda}$ is given by (\ref{defmus})
and (\ref{defmu}).
\end{corollary}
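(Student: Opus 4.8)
The plan is to derive Corollary~\ref{cor_boundwalshcoeff} as essentially an immediate consequence of Theorem~\ref{th_boundwalshcoeff} together with the explicit constant computations that follow the theorem's proof in the excerpt. Since Theorem~\ref{th_boundwalshcoeff} already establishes that for every $\bsk \in \NN_0^s \setminus \{\bszero\}$ there is a constant $C_{f,q,s,\bsgamma} > 0$ with $|\hat{f}(\bsk)| \le C_{f,q,s,\bsgamma} q^{-\mu_{q,\delta+\lambda}(\bsk)}$, the only real content of the corollary is to exhibit a constant that depends on $f$ only through the factor $N_{\delta,\lambda,\bsgamma}(f)$, and to note that the bound extends to $\bsk = \bszero$.

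First I would invoke Theorem~\ref{th_boundwalshcoeff} to get the bound for all $\bsk \neq \bszero$. Then I would recall the explicit form of $C_{f,q,s,\bsgamma}$ displayed after the theorem's proof (in the prime $q$, identity-$\varphi$ case), which is a sum over $\bstau \in \{0,\ldots,\delta-1\}^s$ of terms $|f^{(\bstau)}(\bszero)|\hat{C}^{\tau_1+\cdots+\tau_s}$ plus a sum over $\emptyset \neq u \subseteq \cS$ of terms involving $q^{|u|} C_{s,u}^\delta \hat{C}^{\sum_{j\in\cS\setminus u}\tau_j} V^{(|u|)}_{\lambda,1}(f^{(\bsdelta_u,\bstau_{\cS\setminus u})}(\cdot,\bszero_{\cS\setminus u}))$. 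The key step is to bound each function-dependent factor $|f^{(\bstau)}(\bszero)|$ and $V^{(|u|)}_{\lambda,1}(\cdots)$ by $N_{\delta,\lambda,\bsgamma}(f)$ times the appropriate weight $\gamma_{v(\bstau)}$; this follows directly from the definition of $N_{\delta,\lambda,\bsgamma}(f)$ in (\ref{def_N}), since each such quantity appears (up to the weight $\gamma_{v(\bstau)}^{-1}$ and the exponents $\pp,\qq,\rr$) as one of the summands in the definition, so it is dominated by $\gamma_{v(\bstau)} N_{\delta,\lambda,\bsgamma}(f)$. Factoring $N_{\delta,\lambda,\bsgamma}(f)$ out of the sum then yields the purely combinatorial constant $C_{\delta,q,s,\bsgamma}$ given explicitly in (\ref{eq_constcn}), establishing $C_{f,q,s,\bsgamma} \le C_{\delta,q,s,\bsgamma} N_{\delta,\lambda,\bsgamma}(f)$.

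Next I would handle the case $\bsk = \bszero$: here $\mu_{q,\delta+\lambda}(\bszero) = 0$, so $q^{-\mu_{q,\delta+\lambda}(\bszero)} = 1$, and $|\hat{f}(\bszero)| = |\int_{[0,1)^s} f(\bsx)\rd\bsx| = |f_\emptyset|$, which by the definition of the variation ($V^{(|\emptyset|)} = |f_\emptyset|$) and of $N_{\delta,\lambda,\bsgamma}(f)$ is at most $\gamma_\emptyset N_{\delta,\lambda,\bsgamma}(f)$; this term is already accounted for by the $\bstau = \bszero$ summand in $C_{\delta,q,s,\bsgamma}$, so the bound holds for all $\bsk \in \NN_0^s$ as claimed. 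Finally, for general prime powers $q$ and general $\varphi$ with $\varphi(0) = 0$, I would remark — exactly as the paragraph preceding the corollary states — that the same chain of estimates goes through using the general (non-explicit) forms of the constants in Lemma~\ref{lem_iteration} and Lemma~\ref{lem_walshpoly}, so the existence of such a $C_{\delta,q,s,\bsgamma}$ is unaffected.

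I do not expect any genuine obstacle: the corollary is a bookkeeping restatement of Theorem~\ref{th_boundwalshcoeff} and the explicit constant computations already carried out in the excerpt. The only point requiring a little care is matching each function-dependent factor in the explicit constant with the corresponding term of $N_{\delta,\lambda,\bsgamma}(f)$ and checking that the weights $\gamma_{v(\bstau)}$ cancel correctly (including the convention $0/0 = 0$ when some weight vanishes), but this is routine given the definitions.
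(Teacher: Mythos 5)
Your proposal is correct and follows essentially the same route as the paper: the corollary is obtained there by combining Theorem~\ref{th_boundwalshcoeff} with the explicit constant $C_{f,q,s,\bsgamma}$, bounding each function-dependent factor via the definition of $N_{\delta,\lambda,\bsgamma}(f)$ to get $C_{f,q,s,\bsgamma} \le C_{\delta,q,s,\bsgamma} N_{\delta,\lambda,\bsgamma}(f)$ with $C_{\delta,q,s,\bsgamma}$ as in (\ref{eq_constcn}), and then noting that the argument carries over to arbitrary prime powers $q$ and bijections $\varphi$ with $\varphi(0)=0$. Your additional remarks on the $\bsk=\bszero$ case and the $0/0=0$ convention are consistent with the paper's conventions and do not change the argument.
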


\begin{remark}\rm
The results in this section also hold for the following
generalization. In the definition of $N_{\delta,\lambda,\bsgamma}$
we anchored the function and its derivatives at $0$, i.e., we used
$V_{\lambda,\pp, \qq,
\bsone}^{(|u(\bstau)|)}(f(\cdot,\bszero_{\cS\setminus u(\bstau)}))$.
This can be generalized by choosing an arbitrary $\bsa \in [0,1]^s$
and using
$V_{\lambda,\pp,\qq,\bsone}^{(|u(\bstau)|)}(f(\cdot,\bsa_{S\setminus
u(\bstau)}))$ in the definition of $N_{\delta,\lambda,\bsgamma}$. It
can be shown that in this case we also have
Theorem~\ref{th_boundwalshcoeff} and
Corollary~\ref{cor_boundwalshcoeff}.
\end{remark}

\subsection{Convergence of the Walsh
series}\label{subsect_convergence}

For our purposes here we need strong assumptions on the convergence
of the Walsh series $S(f)(\bsx) = \sum_{\bsk \in \NN_0^s}
\hat{f}(\bsk) \wal_{\bsk}(\bsx)$ to the function $f$, i.e., we
require that the partial series $S_{m}(f)(\bsx) = \sum_{\bsk \in
\NN_0^s \atop k_j < m} \hat{f}(\bsk) \wal_{\bsk}(\bsx)$ converges to
$f(\bsx)$ at every point $\bsx \in [0,1)^s$ as $m \rightarrow
\infty$. (Note that the Walsh series $S(f)$ for the functions
considered in this paper is always absolutely convergent, i.e.,
$\sum_{\bsk\in\NN_0^s} |\hat{f}(\bsk)| < \infty$, hence the Walsh
series $S(f)(\bsx)$ is uniformly bounded by $\sum_{\bsk\in\NN_0^s}
|\hat{f}(\bsk)|$ and therefore $S(f)(\bsx)$ itself converges at
every point $\bsx \in [0,1)^s$.) This is necessary as we want to
approximate the integral at function values $\bsx_n$ and for our
analysis we deal with the Walsh series rather than the function
itself, hence it is paramount that the function and its Walsh series
coincide at every point $\bsx \in [0,1)^s$.

As the functions considered here are at least differentiable it
follows that they are continuous and using the argument in \cite[p.
373]{Fine} it follows that the Walsh series really converges at
every point $\bsx \in [0,1)^s$ to the function value $f(\bsx)$.
Indeed, for a given $\bsx \in [0,1)^s$ we have $$S_{q^m}(f)(\bsx) =
\sum_{\bsk \in \{0,\ldots, q^m-1\}^s} \hat{f}(\bsk)\wal_{\bsk}(\bsx)
= \Vol(J_{\bsx})^{-1} \int_{J_{\bsx}} f(\bsx) \rd \bsx,$$ where
$J_{\bsx} = \prod_{j=1}^s [q^{-m}\lfloor q^m x_j\rfloor,
q^{-m}\lfloor q^m x_j\rfloor+q^{-m})$. The  last equality follows
from
\begin{eqnarray*}
\lefteqn{\sum_{\bsk \in \{0,\ldots, q^m-1\}^s}
\hat{f}(\bsk)\wal_{\bsk}(\bsx) } \qquad\qquad \\ & = &
\int_{[0,1)^s} f(\bsy) \sum_{\bsk\in \{0,\ldots, q^m-1\}^s}
\wal_{\bsk}(\bsx) \overline{\wal_{\bsk}(\bsy)} \rd \bsy \\ & = &
\Vol(J_{\bsx})^{-1} \int_{J_{\bsx}} f(\bsx) \rd \bsx.
\end{eqnarray*}
As the function $f$ is continuous it immediately follows that
$S_{q^m}(f)(\bsx)$ converges to $f(\bsx)$ as $m$ goes to infinity
and the result follows.

\subsection{A function space based on Walsh functions containing smooth functions}\label{subsectwalshspaceE}

In this section we use the above results to define a function space
based on Walsh functions which contains smooth functions for
smoothness conditions considered in the previous section.

Let $\vartheta > 1$ be a real number and $q$ a prime power.  Then
for $\bsk \in \NN_0^s$ we set $r_{q,\vartheta}(\bsk) =
q^{-\mu_{q,\vartheta}(\bsk)}$, where $\mu_{q,\vartheta}$ is given by
(\ref{defmus}) and (\ref{defmu}) (if $\vartheta$ is an integer, then
choose $\lambda = 1$ and $\delta = \vartheta-1$ and otherwise
$\delta = \lfloor \vartheta \rfloor$ and $\lambda = \vartheta -
\lfloor \vartheta \rfloor$).

Now we define a function space $\EE_{s,q,\vartheta,\bsgamma}
\subseteq \LL_2([0,1)^s)$ with norm
$\|\cdot\|_{\EE_{s,q,\vartheta,\bsgamma}}$ given by
$$\|f\|_{\EE_{s,q,\vartheta,\bsgamma}} = \max_{u \subseteq \cS \atop
\gamma_u \neq 0} \gamma_u^{-1} \sup_{\bsk_u \in \NN^{|u|}}
\frac{|\hat{f}(\bsk_u,\bszero_{\cS\setminus
u})|}{r_{q,\vartheta}(\bsk_u)},$$ where again for $\gamma_u = 0$ we
assume that $\hat{f}(\bsk_u,\bszero_{\cS\setminus u}) = 0$ for all
$\bsk_u \in \NN^{|u|}$.

The following result follows now directly from Corollary~\ref{cor_boundwalshcoeff}.
\begin{corollary}\label{cor_smoothfwalsh}
Let $\delta \ge 1$, $0 < \lambda \le 1$, $\pp, \qq, \rr \ge 1$ and
an indexed set $\bsgamma = (\gamma_v)_{v\subset\NN}$ of non-negative
real numbers be given. Then there exists a constant
$C_{\delta,q,s,\bsgamma} > 0$ such that for every function
$f:[0,1)^s \rightarrow \real$, whose partial mixed derivatives up to
order $\delta$ exist, we have
$$\|f\|_{\EE_{s,q,\delta+\lambda,\bsgamma}} \le C_{\delta,
q,s,\bsgamma} N_{\delta,\lambda,\bsgamma}(f),$$ where
$\mu_{q,\delta+\lambda}$ is given by (\ref{defmus}) and
(\ref{defmu}).
\end{corollary}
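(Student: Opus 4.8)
The plan is to derive Corollary~\ref{cor_smoothfwalsh} essentially as a direct restatement of Corollary~\ref{cor_boundwalshcoeff} under the correspondence $\vartheta = \delta + \lambda$. First I would observe that, by definition, $r_{q,\vartheta}(\bsk_u) = q^{-\mu_{q,\vartheta}(\bsk_u)}$, so the quotient $|\hat f(\bsk_u,\bszero_{\cS\setminus u})|/r_{q,\vartheta}(\bsk_u)$ appearing in the norm $\|\cdot\|_{\EE_{s,q,\vartheta,\bsgamma}}$ is exactly $|\hat f(\bsk_u,\bszero_{\cS\setminus u})|\, q^{\mu_{q,\delta+\lambda}(\bsk_u)}$. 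Corollary~\ref{cor_boundwalshcoeff} tells us that this is bounded by $C_{\delta,q,s,\bsgamma}\, N_{\delta,\lambda,\bsgamma}(f)$ for every $\bsk\in\NN_0^s$, in particular for all vectors of the form $(\bsk_u,\bszero_{\cS\setminus u})$ with $\bsk_u\in\NN^{|u|}$.

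Next I would handle the weights. For $u\subseteq\cS$ with $\gamma_u\neq 0$, multiplying the above bound by $\gamma_u^{-1}$ and taking the supremum over $\bsk_u\in\NN^{|u|}$ gives
\[
\gamma_u^{-1}\sup_{\bsk_u\in\NN^{|u|}} \frac{|\hat f(\bsk_u,\bszero_{\cS\setminus u})|}{r_{q,\vartheta}(\bsk_u)} \le \gamma_u^{-1} C_{\delta,q,s,\bsgamma}\, N_{\delta,\lambda,\bsgamma}(f).
\]
However, this is not quite the claimed inequality: the $\gamma_u^{-1}$ on the right is unwanted. The resolution is that the weights $\gamma_u$ are already absorbed into $N_{\delta,\lambda,\bsgamma}(f)$ through the factors $\gamma_{v(\bstau)}^{-1}$ in its definition~(\ref{def_N}), and the constant $C_{\delta,q,s,\bsgamma}$ in~(\ref{eq_constcn}) already carries the appropriate weight factors $\gamma_{v(\bstau)}$ and $\gamma_{u\cup v(\bstau_{\cS\setminus u})}$. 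So I would instead invoke Corollary~\ref{cor_boundwalshcoeff} directly — it already states $|\hat f(\bsk)| \le C_{\delta,q,s,\bsgamma}\, N_{\delta,\lambda,\bsgamma}(f)\, q^{-\mu_{q,\delta+\lambda}(\bsk)}$, and the definition of the norm $\|\cdot\|_{\EE_{s,q,\vartheta,\bsgamma}}$ as a weighted supremum over subsets $u$ with $\gamma_u\neq 0$ is precisely calibrated so that this pointwise bound yields $\|f\|_{\EE_{s,q,\delta+\lambda,\bsgamma}} \le C_{\delta,q,s,\bsgamma}\, N_{\delta,\lambda,\bsgamma}(f)$ after taking the maximum over $u$. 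The key point to check carefully is that the weighted structure matches: the $\gamma_u^{-1}$ in the $\EE$-norm against the $\gamma_{v(\bstau)}$-dependence of the constant in~(\ref{eq_constcn}) and the $\gamma_{v(\bstau)}^{-1}$ in~(\ref{def_N}).

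Finally I would address the case $\gamma_u = 0$: here the convention in both definitions is that $\hat f(\bsk_u,\bszero_{\cS\setminus u}) = 0$ for all $\bsk_u\in\NN^{|u|}$ (this is built into the hypothesis that $N_{\delta,\lambda,\bsgamma}(f) < \infty$ via the convention $0/0 = 0$ and the requirement that the corresponding Vitali variations vanish), so such $u$ contribute nothing to the maximum defining the $\EE$-norm. I do not expect any serious obstacle; the only thing requiring genuine attention is bookkeeping of the weights, i.e.\ verifying that the constant $C_{\delta,q,s,\bsgamma}$ furnished by Theorem~\ref{th_boundwalshcoeff}/Corollary~\ref{cor_boundwalshcoeff} is indeed the \emph{same} constant that works uniformly over all subsets $u\subseteq\cS$ in the supremum defining $\|\cdot\|_{\EE_{s,q,\vartheta,\bsgamma}}$ — but this is immediate because Corollary~\ref{cor_boundwalshcoeff} already provides a bound valid for \emph{all} $\bsk\in\NN_0^s$ simultaneously. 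Hence the proof is a one-line deduction: apply Corollary~\ref{cor_boundwalshcoeff}, divide by $r_{q,\vartheta}(\bsk_u) = q^{-\mu_{q,\vartheta}(\bsk_u)}$, and take the appropriate weighted maximum.
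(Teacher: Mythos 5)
Your proposal is correct and takes essentially the same route as the paper, which simply states that the corollary ``follows now directly from Corollary~\ref{cor_boundwalshcoeff}'': one divides the bound $|\hat f(\bsk)|\le C_{\delta,q,s,\bsgamma}N_{\delta,\lambda,\bsgamma}(f)\,q^{-\mu_{q,\delta+\lambda}(\bsk)}$ by $r_{q,\delta+\lambda}(\bsk_u)$ and takes the weighted maximum over $u$. The residual factor $\gamma_u^{-1}$ you rightly worry about is harmless for the statement as given, since there are only finitely many subsets $u\subseteq\cS$ with $\gamma_u\neq 0$ and $\max_{u:\gamma_u\neq 0}\gamma_u^{-1}$ can be absorbed into the constant, which is permitted to depend on $s$ and $\bsgamma$.
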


Again, using (\ref{eq_constcn}) an explicit constant in
Corollary~\ref{cor_smoothfwalsh} can be obtained for $q$ prime and
$\varphi$ the identity map. For all other cases (i.e., arbitrary
prime powers $q$ and mappings $\varphi$ with $\varphi(0) = 0$)
explicit constants can be obtained as well, but in this case the
constant may also depend on the particular choice of $q$ and
$\varphi$. Further, as noted already above, for $\lambda = 1$ and
$\pp = \qq = \rr = 2$ the functional $N_{\delta,\lambda,\bsgamma}$
coincides with the norm in a certain Sobolev space (the one
dimensional inner product for this Sobolev space is given by
(\ref{eq_ipsob1}) and for higher dimensions one just considers
tensor products of the one dimensional space) and hence it follows
that $\EE_{s,q,\delta + 1,\bsgamma}$ contains certain Sobolev
spaces. Hence Corollary~\ref{cor_smoothfwalsh} shows that if we want
to prove results for smooth functions it is enough to consider
$\EE_{s,q,\vartheta,\bsgamma}$ (in the following we design
quasi-Monte Carlo rules which work well for
$\EE_{s,q,\vartheta,\bsgamma}$ rather than directly for smooth
functions, so the results for smooth functions come as a byproduct).

A function $f \in \EE_{s,q,\vartheta,\bsgamma}$ can be written as a
sum of their anova terms $f = \sum_{u \subseteq \cS} f_u$ (see
\cite{ES}). For a function $f\in \EE_{s,q,\vartheta,\bsgamma}$ given
by $f(\bsx) = \sum_{\bsk \in \NN_0^s} \hat{f}(\bsk)
\wal_{\bsk}(\bsx)$ the anova term $f_u$ corresponding to a subset $u
\subseteq \cS$ is simply given by $$f_u(\bsx_u) = \sum_{\bsk_u \in
\NN^{|u|}} \hat{f}(\bsk_u,\bszero_{\cS\setminus u})
\wal_{\bsk_u}(\bsx_u).$$ If for some $u \subseteq \cS$ we have
$\gamma_u = 0$, then this implies that the anova term corresponding
to $u$ satisfies $f_u \equiv 0$. Hence the Walsh space
$\EE_{s,q,\vartheta, \bsgamma}$ consists only of functions whose
anova term belonging to a subset $u$ is zero for all subsets $u$
with $\gamma_u = 0$ (see also \cite{DSWW2}).

\section{Digital $(t,\alpha, \beta ,n\times m,s)$-nets and digital $(t,\alpha,\beta, \sigma ,s)$-sequences}\label{sectdignets}

In this section we give the definition of digital $(t,\alpha, \beta,n\times m,s)$-nets and digital $(t,\alpha, \beta,\sigma, s)$-sequences. Similar point sets were introduced in \cite{Dick05}.

\subsection{The digital construction scheme}

The construction of the point set used here is a slight generalization of the digital construction scheme introduced by Niederreiter, see \cite{niesiam}, by breaking with the tradition of having square generating matrices.

\begin{definition}\label{def_digcons} \rm
Let $q$ be a prime-power and let $n,m,s \ge 1$ be integers. Let
$C_1,\ldots ,C_s$ be $n \times m$ matrices over the finite field
$\FF_q$ of order $q$. Now we construct $q^m$ points in $[0,1)^s$:
for $0 \le h \le q^m -1$ let $h=h_0+h_1 q +\cdots +h_{m-1} q^{m-1}$
be the $q$-adic expansion of $h$. Consider an arbitrary but fixed
bijection $\varphi:\{0,1,\ldots ,q-1\}\longrightarrow \FF_q$.
Identify $h$ with the vector $\vec{h}=(\varphi(h_0),\ldots
,\varphi(h_{m-1}))^{\top} \in \FF_q^m$, where $\top$ means the
transpose of the vector (note that we write $\vec{h}$ for vectors in
the finite field $\FF_q^m$ and $\bsh$ for vectors of integers or
real numbers). For $1 \le j \le s$ multiply the matrix $C_j$ by
$\vec{h}$, i.e.,
$$C_j \vec{h}=:(y_{j,1}(h),\ldots ,y_{j,n}(h))^{\top} \in \FF_q^n,$$
and set
$$x_{h,j}:=\frac{\varphi^{-1}(y_{j,1}(h))}{q}+\cdots
+\frac{\varphi^{-1}(y_{j,n}(h))}{q^n}.$$  The point set $\{\bsx_0,\ldots, \bsx_{q^m-1}\}$ is called a digital net (over $\FF_q$) (with generating matrices $C_1,\ldots, C_s$).

For $n,m = \infty$ we obtain a sequence $\{\bsx_0,\bsx_1,\ldots\}$, which is called a digital sequence (over $\FF_q$) (with generating matrices $C_1,\ldots, C_s$).
\end{definition}

Niederreiter's concept of a digital $(t,m,s)$-net and a digital $(t,s)$-sequence will appear as a special case in the subsequent section. Further, the digital nets considered below all satisfy $n \ge m$.

For a digital net with generating matrices $C_1,\ldots, C_s$ let $\Dcal = \Dcal(C_1,\ldots, C_s)$ be the dual net given by $$\Dcal = \{\bsk \in \NN_0^s\setminus \{\bszero\}: C_1^\top \vec{k}_1 + \cdots + C_s^\top \vec{k}_s = \vec{0}\},$$ where for $\bsk = (k_1,\ldots, k_s)$ with $k_j = \kappa_{j,0} + \kappa_{j,1} q + \cdots$ and $\kappa_{j,i} \in \{0,\ldots, q-1\}$ let $\vec{k}_j = (\varphi(\kappa_{j,0}),\ldots, \varphi(\kappa_{j,n-1}))^\top$. Further, for $\emptyset \neq u \subseteq \cS$ let $\Dcal_u = \Dcal((C_j)_{j\in u})$ and $\Dcal_u^\ast = \Dcal_u \cap \NN^{|u|}$.

Note that throughout the paper Walsh functions and digital nets are defined using the same finite field $\FF_q$ and the same bijection $\varphi$.

The following lemma is a slight generalization of \cite[Lemma~2.5]{PDP}.

\begin{lemma} \label{fqwalsum}
 Let $\{\bsx_0,\ldots,\bsx_{q^m-1} \}$ be a digital net over
$\FF_{q}$ with bijection $\varphi$, where $\varphi(0)=0$, generated by the $n\times m$ matrices $C_1,\ldots, C_s$ over $\FF_q$, $n,m\ge 1$. Then for any vector $\bsk=(k_1,\ldots ,k_s)$ of
nonnegative integers $0\leq k_1,\ldots,k_s<q^n$ we have
\[ \sum_{h=0}^{q^m-1}\, _{\FF_q,\varphi}\wal_{\bsk}(\bsx_h) =
\begin{cases}
q^m &
\text{ if } \bsk \in \Dcal \cup \{\bszero\}, \\
0 & \text{ else}, \end{cases}
\]
where $\bszero$ is the zero vector in $\NN_0^s$.
\end{lemma}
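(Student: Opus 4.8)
The plan is to compute the character sum directly by unfolding the definition of the digital net points and the Walsh functions. First I would write $h = h_0 + h_1 q + \cdots + h_{m-1}q^{m-1}$ and recall that $x_{h,j} = \varphi^{-1}(y_{j,1}(h))q^{-1} + \cdots + \varphi^{-1}(y_{j,n}(h))q^{-n}$, where $(y_{j,1}(h),\ldots,y_{j,n}(h))^\top = C_j\vec h$. Since $0 \le k_j < q^n$, the Walsh function $\wal_{k_j}(x_{h,j})$ depends only on the first $n$ digits of $x_{h,j}$, so by the definition of $_{\FF_q,\varphi}\wal_{k_j}$ one gets
$$\wal_{\bsk}(\bsx_h) = \exp\!\left(\frac{2\pi\icomp}{p}\sum_{j=1}^s \sum_{l=1}^n \sum_{i=1}^r (\pi_i\circ\eta)(\kappa_{j,l-1})\,(\pi_i\circ\eta)\bigl(\varphi^{-1}(y_{j,l}(h))\bigr)\right),$$
where $k_j = \kappa_{j,0} + \kappa_{j,1}q + \cdots$. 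The key observation is that $(\pi_i\circ\eta)(\kappa_{j,l-1}) = \pi_i(\psi(\varphi(\kappa_{j,l-1})))$ and $(\pi_i\circ\eta)(\varphi^{-1}(y_{j,l}(h))) = \pi_i(\psi(y_{j,l}(h)))$, so the inner double sum over $i$ and $l$ is exactly the standard inner product in $\integer_p^{rn}$ of the images under $\psi$ of $\vec k_j = (\varphi(\kappa_{j,0}),\ldots,\varphi(\kappa_{j,n-1}))^\top$ and of $C_j\vec h$. Using that $\psi$ is an isomorphism of additive groups and that the $\integer_p$-bilinear form $\langle \vec a,\vec b\rangle := \sum_i \pi_i(\psi(a))\pi_i(\psi(b))$ corresponds to the trace form on $\FF_q$, this inner product equals $\mathrm{Tr}(\vec k_j^\top C_j \vec h)$ summed appropriately; in any case it is an $\FF_p$-linear functional of $\vec h$.

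Summing over $h$, i.e.\ over all $\vec h \in \FF_q^m$, I would then use the orthogonality of additive characters of the group $\FF_q^m \cong \integer_p^{rm}$: the sum $\sum_{\vec h} \exp\bigl(\frac{2\pi\icomp}{p}\,\chi(\vec h)\bigr)$, where $\chi(\vec h) = \sum_{j=1}^s \langle \vec k_j, C_j\vec h\rangle = \langle \sum_{j=1}^s C_j^\top \vec k_j,\, \vec h\rangle$, equals $q^m$ if the linear functional $\chi$ is identically zero and $0$ otherwise. Since $\langle\cdot,\cdot\rangle$ is a nondegenerate pairing, $\chi \equiv 0$ is equivalent to $\sum_{j=1}^s C_j^\top \vec k_j = \vec 0$ in $\FF_q^m$, which by the definition of the dual net $\Dcal$ is exactly the condition $\bsk \in \Dcal \cup \{\bszero\}$ (the case $\bsk = \bszero$ being trivial since then $\wal_{\bszero} \equiv 1$). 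This yields the claimed dichotomy.

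\textbf{Main obstacle.} The only genuinely delicate point is the bookkeeping that identifies the exponent $\sum_{l,i}(\pi_i\circ\eta)(\kappa_{j,l-1})(\pi_i\circ\eta)(\varphi^{-1}(y_{j,l}(h)))$ with an $\FF_p$-linear functional of $\vec h$ whose vanishing for all $\vec h$ is equivalent to $\sum_j C_j^\top \vec k_j = \vec 0$ over $\FF_q$. This requires care because the pairing is written through the coordinate maps $\pi_i\circ\psi$ rather than intrinsically, and one must check that transposition of $C_j$ interacts correctly with this pairing (this is precisely where the hypothesis $\varphi(0) = 0$ and the structure of $\psi$ as the canonical $\integer_p$-linear isomorphism enter). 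Since the analogous computation for square matrices is carried out in \cite[Lemma~2.5]{PDP}, the argument here is a routine adaptation: one simply allows $C_j$ to be $n\times m$ rather than $m\times m$, so that $\vec h$ ranges over $\FF_q^m$ while $C_j\vec h$ and $\vec k_j$ live in $\FF_q^n$, and nothing else changes. I would therefore present the proof as a short reduction to the character-orthogonality relation, citing \cite{PDP} for the identification of the bilinear form, and remark that the restriction $k_j < q^n$ is exactly what makes $\wal_{k_j}(x_{h,j})$ a function of the finitely many digits produced by $C_j\vec h$.
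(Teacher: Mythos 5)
Your proposal is correct and follows the same route the paper relies on: the paper gives no proof of Lemma~\ref{fqwalsum} at all, merely noting that it is a slight generalization of \cite[Lemma~2.5]{PDP}, and your character-orthogonality computation (unfold the digits, recognize the exponent as a $\integer_p$-bilinear pairing of $\vec{k}_j$ with $C_j\vec{h}$, sum the resulting additive character over $\vec{h}\in\FF_q^m$) is exactly the standard proof of that lemma adapted to rectangular $n\times m$ generating matrices, with the restriction $k_j<q^n$ playing precisely the role you assign it. One small imprecision worth noting: the coordinate pairing $\sum_i \pi_i(\psi(a))\pi_i(\psi(b))$ coincides with the trace form only for special bases of $\FF_q$ over $\integer_p$ (for the power basis the multiplication matrices need not be symmetric, so $\langle a,cb\rangle=\langle ca,b\rangle$ is not automatic), hence the step $\sum_j\langle \vec{k}_j,C_j\vec{h}\rangle=\langle\sum_j C_j^\top\vec{k}_j,\vec{h}\rangle$ does require the bookkeeping you flag as the main obstacle --- but you correctly isolate this as the one delicate point and defer it to \cite{PDP}, which is also all the paper itself does.
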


\subsection{$(t,\alpha,\beta,n \times m,s)$-nets and $(t,\alpha,\beta,\sigma,s)$-sequences}\label{sec_talpha}

Digital $(t,\alpha,\beta,m,s)$-nets and digital $(t,\alpha,\beta,s)$-sequences were first introduced in \cite{Dick05}. Those point sets were used for quasi-Monte Carlo rules which achieve the optimal rate of convergence of the worst-case error in Korobov spaces (which are reproducing kernel Hilbert spaces of smooth periodic functions). By a slight generalization of digital $(t,\alpha,\beta,m,s)$-nets we will show that those digital nets also achieve the optimal convergence of the worst-case error in the space $\EE_{s,q,\vartheta,\bsgamma}$ for all $1 < \vartheta \le \alpha$.

The $t$ value of a $(t,m,s)$-net is a quality parameter for the
distribution properties of the net. A low $t$ value yields well
distributed point sets and it has been shown, see for example
\cite{DP05,niesiam}, that a small $t$ value also guarantees a small
worst-case error for integration in Sobolev spaces for which the
partial first derivatives are square integrable. In \cite{Dick05} it
was shown how the definition of the $t$ value needs to be modified
in order to obtain faster convergence rates for periodic Sobolev
spaces for which the partial derivatives up to order $\delta \le
\beta$ are square integrable. Here we extend those result in several
ways. First we generalize the digital $(t,\alpha,\beta,m,s)$-nets
used in \cite{Dick05} to digital $(t,\alpha,\beta,n\times m,s)$-nets
and show that we then can remove the periodicity assumption
necessary in \cite{Dick05}. Further, if the derivatives up to order
$\delta$ also have bounded variation with coefficient $0 < \lambda
\le 1$, then we have shown that such functions are in
$\EE_{s,q,\delta+\lambda,\bsgamma}$.

In the following we repeat some definitions and results from
\cite{Dick05} and give the definition of digital
$(t,\alpha,\beta,n\times m,s)$-nets and digital $(t,\alpha,\beta,
\sigma, s)$-sequences.

For a real number $\vartheta > 1$ the definition of the Walsh space
$\EE_{s,q,\vartheta, \bsgamma}$ suggests to define the metric
$\mu_{q,\vartheta}(\bsk,\bsl) = \mu_{q,\vartheta}(\bsk\ominus \bsl)$
on $\NN_0^s$, where $\mu_{q,\vartheta}(\bsk \ominus\bsl)$ is given
by (\ref{defmus}) and (\ref{defmu}), which is an extension of the
metric introduced in \cite{nie86}, see also \cite{rt} (the metric
for $\vartheta = 1$ can be used for Walsh spaces for example
considered in \cite{DP05}; for this case one basically obtains the
metric in \cite{nie86,rt}).  As we will see later, in order to
obtain a small worst-case error in the Walsh space
$\EE_{s,q,\vartheta,\bsgamma}$ we need digital nets for which
$\min\{\mu_{q,\vartheta}(\bsk): \bsk \in \Dcal\}$ is large. By
translating this property into a linear independence property of the
row vectors of the generating matrices $C_1,\ldots, C_s$ we arrive
at the following definition.

\begin{definition}\rm\label{def_net}
Let $n, m,\alpha \ge 1$ be natural numbers, let $0 <\beta \le \alpha
m/n$ be a real number and let $0 \le t \le \beta n$  be  a natural
number. Let $\FF_q$ be the finite field of prime power order $q$ and
let $C_1,\ldots, C_s \in \FF_q^{n \times m}$ with $C_j = (c_{j,1},
\ldots, c_{j,n})^\top$. If for all $1 \le i_{j,\nu_j} < \cdots <
i_{j,1} \le n$, where $0 \le \nu_j \le m$ for all $j = 1,\ldots, s$,
with $$\sum_{j = 1}^s \sum_{l=1}^{\min(\nu_j,\alpha)} i_{j,l}  \le
\beta n - t$$ the vectors
$$c_{1,i_{1,\nu_1}}, \ldots, c_{1,i_{1,1}}, \ldots,
c_{s,i_{s,\nu_s}}, \ldots, c_{s,i_{s,1}}$$ are linearly independent
over $\FF_q$ then the digital net with generating matrices
$C_1,\ldots, C_s$ is called a digital $(t,\alpha,\beta,n\times
m,s)$-net over $\FF_q$. Further we call a digital $(t,\alpha,\beta,
n\times m,s)$-net over $\FF_q$ with the largest possible value of
$\beta$, i.e., $\beta = \alpha m/n$, a digital $(t,\alpha,n\times
m,s)$-net over $\FF_q$.

If $t$ is the smallest non-negative integer such that the digital
net generated by $C_1,\ldots, C_s$ is a digital
$(t,\alpha,\beta,n\times m,s)$-net, then we call the digital net a
strict digital $(t,\alpha,\beta, n \times m,s)$-net or a strict
digital $(t,\alpha,n \times m,s)$-net if $\beta = \alpha m/n$.
\end{definition}

\begin{remark}\rm
Using duality theory (see \cite{np}) it follows that for a digital $(t,\alpha,\beta,n \times m,s)$-net we have $\min_{\bsk \in \Dcal} \mu_{q,\alpha}(\bsk) > \beta n - t$ and for a strict digital $(t,\alpha,\beta,n \times m,s)$-net we have $\min_{\bsk \in \Dcal} \mu_{q,\alpha}(\bsk) = \beta n - t + 1$. Hence digital $(t,\alpha,\beta,n \times m,s)$-nets with high quality have a large value of $\beta n - t$.
\end{remark}

\begin{remark}\rm
In summary the parameters $t, \alpha,\beta, n, m, s$ have the
following meaning:
\begin{itemize}
\item $s$ denotes the dimension of the point set.
\item $n$ and $m$ denote the size of the generating matrices for
digital nets, i.e. the generating matrices are of size $n \times m$;
in particular this means the point set has $q^m$ points.
\item $t$ denotes the quality parameter of the point set; a low $t$
value means high quality. In the upper bound, $t$ is a quality
parameter related to the constant in the upper bound.
\item $\beta$ is also a quality parameter. We will see later that the integration error is roughly $q^{-n}$.
This is of course only true within boundaries, which is the reason
for the parameter $\beta$, i.e. the integration error is roughly
$q^{-\beta n}$. Hence $\beta$ is a quality parameter related to the
convergence rate.
\item $\alpha$ is the smoothness parameter of the point set.
\end{itemize}

We can group the parameters also in the following way:
\begin{itemize}
\item $m,n,s$ are fixed parameters, i.e. they specify the number and
size of the generating matrices.
\item $\alpha$ is a variable parameter, i.e. given (fixed) generating
matrices can for example generate a $(t_1, 1, \beta_1, 10 \times 5,
5)$-net, a $(t_2, 2, \beta_2, 10 \times 5, 5)$-net, and so on (note
the point set is always the same in each instance; the values
$t_1,t_2, \ldots, \beta_1,\beta_2, \ldots$ may differ). This is
necessary as in the upper bounds $\alpha$ will be the smoothness of
the integrand, which may not be known explicitly.
\item $t$ and $\beta$ are dependent parameters, they will depend on
the generating matrices and on $\alpha$. For given generating
matrices, it is desirable to know the values of $\beta$ and $t$ for
each value of $\alpha \in \NN$.
\end{itemize}
\end{remark}

Digital $(t,\alpha,\beta,n\times m,s)$-nets do not exist for
arbitrary choices of the parameters $t,\alpha,\beta,n,m,s$, see
\cite{Dick05}. The digital nets considered in \cite{Dick05} had the
restriction that $n = m$ and special attention was paid to those
digital nets with high quality, i.e., where $\alpha = \beta$. In
this paper, a special role will be played by those digital nets for
which $n = \alpha m$ and $\beta = 1$. The restriction on the linear
independence of the digital nets comprises now $n - t = \alpha m -
t$ row vectors, which is the same as in \cite{Dick05}, with the only
difference that the size of the generating matrices is now bigger as
now each generating matrix has $n = \alpha m$ rows. As those digital
nets play a special role in this work we have the following
definition.
\begin{definition}\rm\label{def_netn}
A digital $(t,\alpha,1, \alpha m \times m, s)$-net over $\FF_q$ is
called a digital $(t,\alpha, \alpha m \times m,s)$-net over $\FF_q$.
A strict digital $(t,\alpha,1, \alpha m \times m, s)$-net over
$\FF_q$ is called a strict digital $(t,\alpha, \alpha m \times
m,s)$-net over $\FF_q$.
\end{definition}

\begin{remark}\rm
For practical purposes we would like to explicitly know digital
$(t,\alpha,\alpha m \times m, s)$-nets for all $\alpha,m,s \ge 1$
with $t$ as small as possible (as will be shown later, they achieve
the optimal rate of convergence of the integration error of
integrands for which all mixed partial derivatives of order $\alpha$
are, for example, square integrable, thus their usefulness).

Further, for given $\alpha,m,s \ge 1$ and a given digital
$(t,\alpha, \alpha m \times m,s)$-net $P$, we would then also like
to know the $t'$ and $\beta'$ value of this point set $P$ when
viewed as a digital $(t',\delta,\beta', \alpha m \times m, s)$-net
for all values $\delta \in \NN$, i.e., $t'$ and $\beta'$ are
functions of $\delta$ (this is because we would also like to know
how well such a digital net $P$ performs if the integrand has
partial mixed derivatives of order up to $\delta$, because we might
not know the smoothness of the integrand, but still would wish that
$P$ performs best possible).
\end{remark}

We can also define sequences of points for which the first $q^m$
points form a digital $(t,\alpha,\beta,n\times m,s)$-nets. In the
classical case \cite{niesiam} one can just consider the left-upper
$m \times m$ submatrices of the generating matrices of a digital
sequence and determine the net properties of these for each $m \in
\NN$. Here, on the other hand, we are considering digital nets whose
generating matrices are $n \times m$ matrices. So we would have to
consider the left-upper $n_m \times m$ submatrices of the generating
matrices of the digital sequence for each $m \in \NN$ and where
$(n_m)_{m \in\NN}$ is a sequence of natural numbers. For our
purposes here it is enough to consider only $n_m$ of the form
$\sigma m$, for some given $\sigma \in \NN$.

\begin{definition}\rm\label{def_seq}
Let $\alpha, \sigma \ge 1$ and $t \ge 0$ be integers and let $0 <\beta \le \alpha/\sigma$ be a real number. Let $\FF_q$ be the finite field of prime power order $q$ and let $C_1,\ldots, C_s \in \FF_q^{\infty \times \infty}$ with $C_j = (c_{j,1}, c_{j,2}, \ldots)^\top$. Further let $C_{j,\sigma m \times m}$ denote the left upper $\sigma m \times m$ submatrix of  $C_j$. If for all $m > t/(\beta\sigma)$ the matrices $C_{1,\sigma m \times m},\ldots, C_{s,\sigma m \times m}$ generate a digital $(t,\alpha,\beta,\sigma m \times m,s)$-net then the digital sequence with generating matrices $C_1,\ldots, C_s$ is called a digital $(t,\alpha,\beta, \sigma ,s)$-sequence over $\FF_q$. Further we call a digital $(t,\alpha,1, \alpha,s)$-sequence over $\FF_q$ a digital $(t,\alpha,s)$-sequence over $\FF_q$.

If $t$ is the smallest non-negative integer such that the digital sequence generated by $C_1,\ldots, C_s$ is a digital $(t,\alpha,\beta, \sigma,s)$-sequence, then we call the digital sequence a strict digital $(t,\alpha,\beta, \sigma, s)$-sequence or a strict digital $(t,\alpha,s)$-sequence if $\alpha = \sigma$ and $\beta = 1$.
\end{definition}

For short we will often write $(t,\alpha,\beta, n\times m, s)$-net
instead of digital $(t,\alpha,\beta, n\times m, s)$-net over
$\FF_q$. The same applies to the other notions defined above.

\begin{remark}\rm
Note that the definition of a digital $(t, 1,m\times m,s)$-net
coincides with the definition of a digital $(t,m,s)$-net and the
definition of a digital $(t, 1,s)$-sequence coincides with the
definition of a digital $(t,s)$-sequence as defined by
Niederreiter~\cite{niesiam}. Further note that the $t$-value depends
on $\alpha, \beta$ and $\sigma$, i.e., $t = t(\alpha,\beta,\sigma)$
or $t = t(\alpha)$ if $\alpha = \sigma$ and $\beta = 1$.
\end{remark}

The definition of $(t,\alpha,s)$-sequences here differs slightly
from the definition in \cite{Dick05}. Indeed the definition of a
$(t,\alpha,s)$-sequence in \cite{Dick05} corresponds to a
$(t,\alpha,\alpha,1,s)$-sequence in the terminology of this paper,
whereas here we call a $(t,\alpha,1,\alpha,s)$-sequence a
$(t,\alpha,s)$-sequence. On the other hand note that the condition
of linear independence in Definition~\ref{def_net} is the same in
both cases, i.e., the sum $i_{1,1} + \cdots +
i_{1,\min(\nu_1,\alpha)} + \cdots + i_{s,1} + \cdots +
i_{s,\min(\nu_s,\alpha)}$ needs to be bounded by $\alpha m -t$ for
all $m$ for $(t,\alpha,1,\alpha,s)$-sequences and also for
$(t,\alpha,\alpha,1,s)$-sequences.

\subsection{Some properties of $(t,\alpha,\beta, n\times m, s)$-nets and $(t,\alpha,\beta,\sigma,s)$-sequences}

The properties of such digital nets and sequences shown in
\cite{Dick05} also hold here. For example it was shown there that a
digital $(t,\alpha,m,s)$-net is also a digital $(\lceil t
\alpha'/\alpha\rceil ,\alpha',m,s)$-net for all $1\le \alpha' \le
\alpha$ and every digital $(t,\alpha,s)$-sequence is also a digital
$(\lceil t\alpha'/\alpha \rceil,\alpha',s)$-sequence for all $1\le
\alpha' \le \alpha$. In the same way we have the following theorem.

\begin{theorem}\label{th_prop}
Let $P$ be a digital $(t,\alpha,\beta,n\times m,s)$-net over $\FF_q$
and let  $S$ be a digital $(t,\alpha,\beta, \sigma,s)$-sequence over
$\FF_q$. Then we have:
\begin{enumerate}
\item[(i)] $P$ is a digital $(t',\alpha,\beta',n\times m,s)$-net for all $1\le \beta' \le \beta$ and  all $t \le t' \le \beta' m$ and $S$ is a digital $(t',\alpha,\beta', \sigma ,s)$-sequence for all $1 \le \beta' \le \beta$ and all $t \le t'$.
\item[(ii)] $P$ is a digital $(t',\alpha',\beta',n\times m,s)$-net for all $1\le \alpha' \le n$ where $\beta' = \beta\min(\alpha,\alpha')/\alpha$ and $t' =  \lceil t \min(\alpha,\alpha')/\alpha\rceil$ and $S$ is a digital $(t',\alpha',\beta', \sigma,s)$-sequence for all $\alpha' \ge 1$ where  $\beta' = \beta \min(\alpha,\alpha')/\alpha$ and where $t' =  \lceil t \min(\alpha,\alpha')/\alpha\rceil$.
\item[(iii)] Any digital $(t,\alpha,n \times m,s)$-net is a digital $(\lceil t \alpha'/\alpha\rceil ,\alpha',n\times m,s)$-net for all $1\le \alpha' \le \alpha$ and every digital $(t,\alpha,\sigma, s)$-sequence is a digital $(\lceil t\alpha'/\alpha \rceil,\alpha', \sigma,s)$-sequence for all $1\le \alpha' \le \alpha$.
\item[(iv)] If $C_1,\ldots, C_s \in \integer_b^{n\times m}$ are the generating matrices of a digital $(t,\alpha,\beta,n\times m,s)$-net then the matrices $C_1^{(n')}, \ldots, C_s^{(n')}$, where $C_j^{(n')}$ consists of the first $n'$ rows of $C_j$, generate a digital $(t,\alpha,\beta, n'\times m,s)$-net for all $1 \le n' \le n$ .
\item[(v)] Any digital $(t,\alpha,\beta,\sigma,s)$-sequence is a digital $(t,\alpha,\beta,\sigma',s)$-sequence for all $1 \le \sigma' \le \sigma$.
\end{enumerate}
\end{theorem}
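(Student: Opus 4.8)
The plan is to derive all five assertions from one monotonicity principle for the condition in Definition~\ref{def_net}: the smaller the quantity $\beta n - t$, the fewer index tuples $1 \le i_{j,\nu_j} < \cdots < i_{j,1} \le n$ must be tested, so decreasing $\beta$, increasing $t$, or decreasing the number of rows $n$ can only make the digital net property easier to satisfy. Parts (i), (iv), (v) are direct instances of this principle, part (ii) is the substantive step, and part (iii) is a specialization of (ii). In each case the statement for sequences follows by applying the statement for nets to the submatrices $C_{j,\sigma m\times m}$, together with the elementary fact that the threshold $m > t/(\beta\sigma)$ in Definition~\ref{def_seq} shifts in the direction needed when $\beta$, $t$ or $\sigma$ are changed.

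For (i): if $1 \le \beta' \le \beta$ and $t' \ge t$ then $\beta'n - t' \le \beta n - t$, so every tuple with $\sum_{j=1}^s\sum_{l=1}^{\min(\nu_j,\alpha)} i_{j,l} \le \beta'n - t'$ already satisfies the hypothesis of the $(t,\alpha,\beta,n\times m,s)$-net property and the associated row vectors are linearly independent. For (iv), the first $n'$ rows of $C_j$ are exactly $c_{j,1},\ldots,c_{j,n'}$, so a linear independence statement for the truncated matrices is a special case of one for the full matrices, and $\beta n' - t \le \beta n - t$ shows the relevant tuples are covered. For (v), $C_{j,\sigma'm\times m}$ consists of the first $\sigma'm$ rows of $C_{j,\sigma m\times m}$, so part (iv) turns a $(t,\alpha,\beta,\sigma m\times m,s)$-net generated by the former into a $(t,\alpha,\beta,\sigma'm\times m,s)$-net generated by the latter; since $\sigma'\le\sigma$ gives $t/(\beta\sigma')\ge t/(\beta\sigma)$, the values of $m$ for which the net property is required of the $\sigma'$-sequence lie inside those for which it is guaranteed by the $\sigma$-sequence hypothesis.

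The real work is (ii). If $\alpha'\ge\alpha$ then $\min(\nu_j,\alpha')\ge\min(\nu_j,\alpha)$, so $\sum_{l=1}^{\min(\nu_j,\alpha)} i_{j,l}\le\sum_{l=1}^{\min(\nu_j,\alpha')} i_{j,l}$ and the claim (with $\beta'=\beta$, $t'=t$) is immediate. For $\alpha'<\alpha$ the key elementary estimate is
$$\sum_{l=1}^{\min(\nu_j,\alpha)} i_{j,l}\;\le\;\frac{\alpha}{\alpha'}\sum_{l=1}^{\min(\nu_j,\alpha')} i_{j,l}\qquad(1\le j\le s).$$
If $\nu_j\le\alpha'$ both sides run over the same indices and $\alpha/\alpha'\ge 1$; if $\nu_j>\alpha'$, there are at most $\alpha-\alpha'$ extra indices $i_{j,\alpha'+1},\ldots,i_{j,\min(\nu_j,\alpha)}$, and each of them is at most $i_{j,\alpha'}\le\frac{1}{\alpha'}\sum_{r=1}^{\alpha'} i_{j,r}$ (because $i_{j,\alpha'}$ is the smallest of the first $\alpha'$), so their sum is at most $\frac{\alpha-\alpha'}{\alpha'}\sum_{r=1}^{\alpha'} i_{j,r}$ and adding back $\sum_{r=1}^{\alpha'} i_{j,r}$ gives the bound. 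Summing over $j$ and inserting the hypothesis $\sum_j\sum_{l=1}^{\min(\nu_j,\alpha')} i_{j,l}\le\beta'n-t'$ with $\beta'=\beta\alpha'/\alpha$ and $t'=\lceil t\alpha'/\alpha\rceil$ yields
$$\sum_{j=1}^s\sum_{l=1}^{\min(\nu_j,\alpha)} i_{j,l}\;\le\;\frac{\alpha}{\alpha'}(\beta'n-t')\;=\;\beta n-\frac{\alpha}{\alpha'}\Big\lceil\frac{t\alpha'}{\alpha}\Big\rceil\;\le\;\beta n-t,$$
where the last step uses $\frac{\alpha}{\alpha'}\lceil t\alpha'/\alpha\rceil\ge t$; thus the original net property applies and delivers the required linear independence, and the sequence version follows by running this through each $C_{j,\sigma m\times m}$. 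Finally (iii) is the case $\beta=\alpha m/n$ of (ii): then $\beta'=(\alpha m/n)(\alpha'/\alpha)=\alpha'm/n$ for $\alpha'\le\alpha$, which is exactly the maximal admissible third parameter at smoothness $\alpha'$, so the resulting net (resp.\ sequence) is by definition a digital $(\lceil t\alpha'/\alpha\rceil,\alpha',n\times m,s)$-net (resp.\ $(\lceil t\alpha'/\alpha\rceil,\alpha',\sigma,s)$-sequence). I expect the only genuine obstacle to be the displayed inequality together with the bookkeeping around the ceiling function; the remaining checks that the new parameter tuples are admissible in the sense of Definition~\ref{def_net} follow at once from $\alpha'\le\alpha$ and $\beta'\le\beta$.
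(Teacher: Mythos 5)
Your proof is correct, and it supplies in full the argument that the paper itself omits (the paper only remarks that these properties follow ``in the same way'' as the analogous statements in \cite{Dick05}). The monotonicity observations for (i), (iv), (v), and the key inequality $\sum_{l=1}^{\min(\nu_j,\alpha)} i_{j,l}\le(\alpha/\alpha')\sum_{l=1}^{\min(\nu_j,\alpha')} i_{j,l}$ driving (ii)--(iii), together with the check that $(\alpha/\alpha')\lceil t\alpha'/\alpha\rceil\ge t$ and the shift of the threshold $m>t/(\beta\sigma)$ for the sequence statements, are exactly the intended standard argument.
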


\subsection{Constructions of $(t,\alpha,\beta, n \times m,s)$-nets and  $(t,\alpha, \sigma,s)$-sequences}\label{sec_talphacons}

In this section we show how explicit examples of $(t,\alpha,\beta, n \times m,s)$-nets and  $(t,\alpha,\beta,\sigma, s)$-sequences can be constructed. The idea for the construction is based on the construction method presented in \cite{Dick05}.

Let $d \ge 1$ and let $C_1,\ldots, C_{sd}$ be the generating matrices of a digital $(t,m,sd)$-net. Note that many explicit examples of such generating matrices are known, see for example \cite{faure,niesiam,NX,sob67} and the references therein. For the construction of a $(t,\alpha,m,s)$-net any of the above mentioned explicit constructions can be used, but as will be shown below the quality of the $(t,\alpha,m,s)$-net obtained depends on the quality of the underlying digital $(t,m,sd)$-net on which our construction is based on.

 Let $C_j = (c_{j,1},\ldots, c_{j,m})^\top$ for $j = 1,\ldots, sd$, i.e., $c_{j,l}$ are the row vectors of $C_j$. Now let the matrix $C^{(d)}_{j}$ be made of the first rows of the matrices $C_{(j-1)d + 1},\ldots, C_{jd}$, then the second rows of $C_{(j-1)d+1},\ldots, C_{jd}$ and so on. The matrix $C^{(d)}_{j}$ is then an $d m \times m$ matrix, i.e., $C^{(d)}_j = (c^{(d)}_{j,1},\ldots, c^{(d)}_{j,dm})^\top$ where $c^{(d)}_{j,l} = c_{u,v}$ with $l = (v-j)d + u$, $1\le v \le m$ and $(j-1)d < u \le jd$ for $l = 1,\ldots, d m$ and $j = 1,\ldots, s$. The following result is a slight generalization of \cite[Theorem~3]{Dick05} and can be obtained using the same proof technique.

\begin{theorem}\label{th_talphabeta}
Let $d \ge 1$ be a natural number and let $C_{1},\ldots, C_{sd}$ be the generating matrices of a digital $(t',m,sd)$-net over the finite field $\FF_q$ of prime power order $q$. Let $C^{(d)}_{1},\ldots, C^{(d)}_{s}$ be defined as above. Then for any $\alpha \ge 1$ the matrices $C^{(d)}_{1},\ldots, C^{(d)}_{s}$ are  generating matrices of a digital $(t,\alpha,\min(1,\alpha/d),dm \times m,s)$-net over $\FF_q$ with $$t = \min(\alpha,d)\;t' + \left\lceil \frac{s(d-1) \min(\alpha,d)}{2}\right\rceil.$$
\end{theorem}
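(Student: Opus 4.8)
The plan is to verify directly the linear‑independence condition of Definition~\ref{def_net} for the interleaved matrices $C^{(d)}_{1},\dots,C^{(d)}_{s}$, closely following the technique of \cite[Theorem~3]{Dick05}. Throughout write $\alpha^{\ast}=\min(\alpha,d)$, so that $\min(1,\alpha/d)=\alpha^{\ast}/d$ and, for $n=dm$ and the claimed $t$, the quantity $\beta n-t$ appearing in Definition~\ref{def_net} equals $\alpha^{\ast}m-t$. The bookkeeping everything rests on is the correspondence between rows of $C^{(d)}_{j}$ and rows of the original matrices: by construction, row $i$ of $C^{(d)}_{j}$ is row $\lceil i/d\rceil$ of $C_{(j-1)d+w}$, where $w=((i-1)\bmod d)+1\in\{1,\dots,d\}$. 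Hence a choice of rows $i_{j,\nu_j}<\dots<i_{j,1}$ from each $C^{(d)}_{j}$ (with $0\le\nu_j\le m$) corresponds to a choice of rows from the $sd$ matrices $C_1,\dots,C_{sd}$, and linear independence of the first family over $\FF_q$ is equivalent to linear independence of the second.

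Next I would record the consequence of the hypothesis that $C_1,\dots,C_{sd}$ generate a digital $(t',m,sd)$-net. For $h\in\{1,\dots,sd\}$ let $D_h$ be the largest index of a row of $C_h$ occurring in our selection, with $D_h=0$ if no row of $C_h$ occurs. Then the selected rows form a subset of the first $D_1,\dots,D_{sd}$ rows of $C_1,\dots,C_{sd}$ respectively, so they are linearly independent as soon as $\sum_{h=1}^{sd}D_h\le m-t'$, by the defining $(t',m,sd)$-net property. Thus the theorem reduces to proving $\sum_{h=1}^{sd}D_h\le m-t'$ under the hypothesis $\sum_{j=1}^{s}\sum_{l=1}^{\min(\nu_j,\alpha)}i_{j,l}\le\alpha^{\ast}m-t$.

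The core is a per‑coordinate estimate: for each $j\in\{1,\dots,s\}$,
\[
\sum_{w=1}^{d}D_{(j-1)d+w}\ \le\ \frac{1}{\alpha^{\ast}}\sum_{l=1}^{\min(\nu_j,\alpha)}i_{j,l}\ +\ \frac{d-1}{2}.
\]
To prove it, observe that $D_{(j-1)d+w}$ is the block number $\lceil I/d\rceil$ of the largest selected index $I$ with column index $w$; writing each such class‑maximum as $I=(B-1)d+p$ with $p\in\{1,\dots,d\}$, the position $p$ coincides with $w$, so the positions of the $\kappa\le\min(\nu_j,d)$ class‑maxima are pairwise distinct and their sum is at least $\kappa(\kappa+1)/2$. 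Summing the identity $dB=I-p+d$ over the class‑maxima $I_1>\dots>I_\kappa$, using $\sum_{r=1}^{\kappa}I_r\le\sum_{r=1}^{\kappa}i_{j,r}$ (subset), the elementary bound $\sum_{r=1}^{\kappa}i_{j,r}\le\frac{d}{\alpha^{\ast}}\sum_{l=1}^{\min(\nu_j,\alpha^{\ast})}i_{j,l}$ (which uses only that the $i_{j,l}$ are strictly decreasing positive integers and $\kappa\le d$), the inequality $\kappa d-\kappa(\kappa+1)/2\le d(d-1)/2$ valid for $0\le\kappa\le d$, and $\min(\nu_j,\alpha^{\ast})\le\min(\nu_j,\alpha)$, yields the displayed estimate. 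Summing over $j$ gives $\sum_{h}D_h\le\frac{1}{\alpha^{\ast}}\sum_{j}\sum_{l=1}^{\min(\nu_j,\alpha)}i_{j,l}+\frac{s(d-1)}{2}$, and feeding in the hypothesis together with $t=\alpha^{\ast}t'+\lceil s(d-1)\alpha^{\ast}/2\rceil\ge\alpha^{\ast}t'+s(d-1)\alpha^{\ast}/2$ produces $\sum_{h}D_h\le m-t'$, finishing the proof.

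The step I expect to be the main obstacle is the per‑coordinate combinatorial estimate, and specifically two features of it: obtaining the sharp constant $(d-1)/2$ — a naive termwise bound $\lceil i/d\rceil\le(i+d-1)/d$ loses roughly a factor of $2$, and one genuinely needs the distinctness of the within‑block positions of the class‑maxima — and handling a coordinate $j$ carrying more than $\alpha$ selected rows, where the hypothesis controls only the largest $\alpha$ of the indices $i_{j,l}$; there the contribution of the remaining class‑maxima must be absorbed using only monotonicity of the $i_{j,l}$ and the bound $\kappa\le d$ on the number of nonempty classes. Keeping the two row‑index conventions and the interleaving indices straight is the other place requiring care, but that part is routine.
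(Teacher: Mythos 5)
Your argument is correct, and it is essentially the proof the paper intends: the paper gives no details here, stating only that the result ``can be obtained using the same proof technique'' as \cite[Theorem~3]{Dick05}, and your reduction to the row-independence condition of the underlying $(t',m,sd)$-net via the block/position decomposition $i=(B-1)d+p$, the distinctness of the within-block positions of the class maxima (yielding the triangular-number gain $\kappa(\kappa+1)/2$ and hence the constant $s(d-1)\min(\alpha,d)/2$), and the absorption of indices beyond the largest $\min(\alpha,d)$ via monotonicity is precisely that technique, adapted to the rectangular $dm\times m$ setting. In effect you have supplied the verification the paper omits; all the steps, including the bound $\sum_{r\le\kappa} i_{j,r}\le \frac{d}{\alpha^{\ast}}\sum_{l\le\min(\nu_j,\alpha^{\ast})} i_{j,l}$ and the final integer comparison $\sum_h D_h\le m-t'$, check out.
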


The above construction and Theorem~\ref{th_talphabeta} can easily be
extended to $(t,\alpha,\beta,\sigma,s)$-sequences. Indeed, let $d
\ge 1$ and let $C_1,\ldots, C_{sd}$ be the generating matrices of a
digital $(t,sd)$-sequence. Again many explicit generating matrices
are known, see for example \cite{faure,niesiam,NX,sob67}. Let $C_j =
(c_{j,1},c_{j,2},\ldots)^\top$ for $j = 1,\ldots,sd$, i.e.,
$c_{j,l}$ are the row vectors of $C_j$. Now let the matrix
$C^{(d)}_{j}$ be made of the first rows of the matrices $C_{(j-1)d +
1},\ldots, C_{jd}$, then the second rows of $C_{(j-1)d+1},\ldots,
C_{jd}$ and so on, i.e., $$C^{(d)}_{j} = (c_{(j-1)d+1,1},\ldots,
c_{jd,1},c_{(j-1)d+1,2},\ldots,c_{jd,2},\ldots)^\top.$$ The
following theorem states that the matrices $C^{(d)}_{1},\ldots,
C^{(d)}_{s}$ are the generating matrices of a digital
$(t,\alpha,\min(1,\alpha/d), d,s)$-sequence, compare with
\cite[Theorem~4]{Dick05}.

\begin{theorem}\label{th_talphabetaseq}
Let $d \ge 1$ be a natural number and let $C_{1},\ldots, C_{sd}$ be the generating matrices of a digital $(t',sd)$-sequence over the finite field $\FF_q$ of prime power order $q$. Let $C^{(d)}_{1},\ldots, C^{(d)}_{s}$ be defined as above. Then for any $\alpha \ge 1$ the matrices $C^{(d)}_{1},\ldots, C^{(d)}_{s}$ are generating matrices of a digital $(t,\alpha,\min(1,\alpha/d), d,s)$-sequence over $\FF_q$ with $$t = \min(\alpha,d)\;t' + \left\lceil \frac{s(d-1) \min(\alpha,d)}{2}\right\rceil.$$
\end{theorem}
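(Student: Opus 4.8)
The plan is to reduce the sequence statement to the already-established net statement of Theorem~\ref{th_talphabeta} by examining the left-upper $\sigma m \times m$ submatrices, with $\sigma = d$. First I would recall the definition of a digital $(t',sd)$-sequence (Definition~\ref{def_seq} with $\alpha = \sigma = 1$, $\beta = 1$): the left-upper $m \times m$ submatrices $C_{1,m\times m},\ldots,C_{sd,m\times m}$ generate a digital $(t',m,sd)$-net for every $m > t'$. The key observation is that the construction $C_j \mapsto C_j^{(d)}$ is \emph{compatible with truncation}: the left-upper $dm \times m$ submatrix of $C_j^{(d)}$ (as defined for the sequence) is exactly the matrix $C_j^{(d)}$ one would build by applying the net construction to the left-upper $m \times m$ submatrices $C_{(j-1)d+1,m\times m},\ldots,C_{jd,m\times m}$. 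This is because the first $dm$ rows of $C_j^{(d)}$ are precisely the interleaving of the first $m$ rows of each of $C_{(j-1)d+1},\ldots,C_{jd}$, and these row vectors only have their first $m$ entries multiplied against $\vec h \in \FF_q^m$ anyway.

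Next I would invoke Theorem~\ref{th_talphabeta}: since $C_{1,m\times m},\ldots,C_{sd,m\times m}$ generate a digital $(t',m,sd)$-net, the interleaved matrices $C^{(d)}_{1,dm\times m},\ldots,C^{(d)}_{s,dm\times m}$ generate a digital $(t,\alpha,\min(1,\alpha/d),dm\times m,s)$-net over $\FF_q$ with $t = \min(\alpha,d)\,t' + \lceil s(d-1)\min(\alpha,d)/2\rceil$, for every $\alpha \ge 1$. Because this holds for every $m$ for which $C_{1,m\times m},\ldots,C_{sd,m\times m}$ form a $(t',m,sd)$-net — i.e.\ for all $m > t'$, and hence (after checking the threshold) for all $m > t/(\beta\sigma)$ with $\sigma = d$, $\beta = \min(1,\alpha/d)$ — the definition of a digital $(t,\alpha,\beta,\sigma,s)$-sequence (Definition~\ref{def_seq}) is satisfied with $\sigma = d$ and $\beta = \min(1,\alpha/d)$. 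One should double-check that $\beta \le \alpha/\sigma$, which reads $\min(1,\alpha/d) \le \alpha/d$, and that the linear-independence threshold $m > t/(\beta\sigma)$ is implied by $m > t'$; both are routine since $t' \le t/\min(\alpha,d) \le t/(\beta d)$ when $\beta d \le \min(\alpha,d)$.

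The main obstacle I anticipate is purely bookkeeping: verifying the truncation-compatibility claim carefully, i.e.\ that forming $C^{(d)}_j$ from the infinite matrices and then truncating to $dm$ rows gives the same matrix as truncating each $C_{(j-1)d+k}$ to $m$ rows first and then interleaving. This is where the precise indexing $c^{(d)}_{j,l} = c_{u,v}$ with $l = (v-j)d+u$ must be tracked, but it is a direct consequence of the fact that row $l$ of $C^{(d)}_j$ with $l \le dm$ corresponds to a row index $v \le m$ in one of the underlying matrices. Once this is in place, the theorem follows immediately from Theorem~\ref{th_talphabeta} applied at each level $m$; no new estimation is needed, exactly as in the proof of \cite[Theorem~4]{Dick05}.
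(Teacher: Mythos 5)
Your reduction is exactly the route the paper intends: it states Theorem~\ref{th_talphabetaseq} as an immediate extension of Theorem~\ref{th_talphabeta} (cf.\ \cite[Theorem~4]{Dick05}) obtained by applying the net result to the left-upper $dm\times m$ submatrices for each admissible $m$, and your verification of the truncation compatibility of the interleaving and of the threshold $m > t/(\beta\sigma)$ with $\beta\sigma = \min(\alpha,d) $ fills in precisely the bookkeeping the paper omits. The argument is correct as written.
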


The last result shows that $(t,\alpha,\beta, \sigma m \times
m,s)$-nets indeed exist for $\beta = 1$ and any $0 < \sigma \le
\alpha$ and for $m$ arbitrarily large. We have even shown that
digital $(t,\alpha,\beta, \alpha m \times m,s)$-nets exist which are
extensible in $m$ and $s$. This can be achieved by using an
underlying $(t',sd)$-sequence which is itself extensible in $m$ and
$s$. If the $t'$ value of the original $(t',m,s)$-net or
$(t',s)$-sequence is known explicitly then we also know the $t$
value of the digital $(t,\alpha,\beta,\alpha m \times m,s)$-net or
$(t,\alpha,\beta, \sigma ,s)$-sequence. Furthermore it has also been
shown how such digital nets can be constructed in practice. Further
results on such sequences are established in \cite{Dick05}.

\section{Numerical integration in the Walsh space $\EE_{s,q,\vartheta,\bsgamma}$}\label{sect_int}

In this section we investigate numerical integration in the Walsh
space $\EE_{s,q,\vartheta,\bsgamma}$ using quasi-Monte Carlo rules
$$Q_{q^m,s}(f) = \frac{1}{q^m} \sum_{n=0}^{q^m-1} f(\bsx_n),$$ where
$\bsx_0,\ldots, \bsx_{q^m-1}$ are the points of a digital
$(t,\alpha,\beta,m,s)$-net over $\FF_q$. More precisely, we want to
approximate the integral $$I_s(f) = \int_{[0,1]^s} f(\bsx)\rd\bsx$$
by the quasi-Monte Carlo rule $Q_{q^m,s}(f)$. As a quality measure
for our rule we introduce the worst-case error in the next section.

\subsection{The worst-case error in the Walsh space $\EE_{s,q,\vartheta,\bsgamma}$}

The worst-case error for the Walsh
space~$\EE_{s,q,\vartheta,\bsgamma}$ using the quasi-Monte Carlo
rule $Q_{q^m,s}$ is given by $$e(Q_{q^m,s},\EE_{s,
q,\vartheta,\bsgamma}) = \sup_{f \in \EE_{s,q,\vartheta,\bsgamma}
\atop \|f\|_{\EE_{s,q,\vartheta,\bsgamma}} \le 1} \left|I_s(f) -
Q_{q^m,s}(f)\right|.$$ The initial error is given by
$$e(Q_{0,s},\EE_{s,q,\vartheta,\bsgamma}) =
\sup_{f\in\EE_{s,q,\vartheta,\bsgamma} \atop
\|f\|_{\EE_{s,q,\vartheta,\bsgamma}}\le 1} \left|I_s(f)\right|.$$

In the following we use digital nets generated by the matrices
$C_1,\ldots, C_s$ as quadrature points for the quadrature rule
$Q_{q^m,s}$. Let $f \in \EE_{s,q,\vartheta,\bsgamma}$. Using
Lemma~\ref{fqwalsum} it follows that
\begin{eqnarray*}
|I_s(f) - Q_{q^m,s}(f)| & = & \left|\sum_{\bsk \in \Dcal} \hat{f}(\bsk) \right| \\ &\le & \sum_{\bsk \in \Dcal} |\hat{f}(\bsk)| = \sum_{\emptyset \neq u \subseteq \cS} \sum_{\bsk_u \in \Dcal_u^\ast} |\hat{f}(\bsk_u,\bszero_{\cS\setminus u})|.
\end{eqnarray*}
Now we have $|\hat{f}(\bsk_u,\bszero_{\cS\setminus u})| \le \gamma_u r_{q,\vartheta}(\bsk_u) \|f\|_{\EE_{s,q,\vartheta,\bsgamma}}$ and thus we obtain
\begin{equation}\label{eq_bounderror}
|I_s(f) - Q_{q^m,s}(f)| \le \|f\|_{\EE_{s,q,\vartheta,\bsgamma}} \sum_{\emptyset \neq u \subseteq \cS} \gamma_u \sum_{\bsk_u \in \Dcal_u^\ast} r_{q,\vartheta}(\bsk_u).
\end{equation}
By choosing $\hat{f}(\bsk_u,\bszero_{\cS\setminus u}) = \gamma_u
r_{q,\vartheta}(\bsk_u)$ for all $u$ and $\bsk_u$ we can also obtain
equality in (\ref{eq_bounderror}). Thus we have
\begin{equation}\label{wce_eq}
e(Q_{q^m,s},\EE_{s,q,\vartheta,\bsgamma}) =
\sum_{\emptyset \neq u \subseteq\cS} \gamma_u \sum_{\bsk_u \in \Dcal_u^\ast} r_{q,\vartheta}(\bsk_u).
\end{equation}
From the last formula we can now see that essentially a large value
of $\min\{\mu_{q,\vartheta}(\bsk): \bsk \in \Dcal\}$ guarantees a
small worst-case error. Further it can be shown that
\begin{equation}\label{initialerror} e(Q_{0,s},\EE_{s,q,\vartheta,\bsgamma}) =
\gamma_{\emptyset}.\end{equation} We have shown the following
theorem.

\begin{theorem}
The initial error for multivariate integration in the Walsh space
$\EE_{s,q,\vartheta,\bsgamma}$ is given by (\ref{initialerror}) and
the worst-case error for multivariate integration in the Walsh
space~$\EE_{s,q,\vartheta,\bsgamma}$ using a digital net as
quadrature points is given by (\ref{wce_eq}).
\end{theorem}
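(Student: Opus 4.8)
The plan is to reorganize the computation already sketched above into a proof whose two substantive ingredients are (a) an exact identity for the integration error coming from the character-sum property of digital nets, and (b) an explicit extremal function realizing equality in the upper bound~(\ref{eq_bounderror}).

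\emph{The exact error identity.} For $f\in\EE_{s,q,\vartheta,\bsgamma}$ the definition of the norm gives $|\hat f(\bsk)|\le\gamma_u\,r_{q,\vartheta}(\bsk_u)\,\|f\|_{\EE_{s,q,\vartheta,\bsgamma}}$, where $u$ is the support of $\bsk$, and $\sum_{\bsk\in\NN_0^s}r_{q,\vartheta}(\bsk)=\prod_{j=1}^s\sum_{k\ge0}q^{-\mu_{q,\vartheta}(k)}<\infty$ precisely because $\vartheta>1$; hence the Walsh series of $f$ converges absolutely and we may take $f$ to be given pointwise by this series (see Section~\ref{subsect_convergence}). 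I would then substitute this series into $Q_{q^m,s}(f)=q^{-m}\sum_{h=0}^{q^m-1}f(\bsx_h)$ and interchange the finite sum over $h$ with the absolutely convergent sum over $\bsk$, giving $Q_{q^m,s}(f)=\sum_{\bsk\in\NN_0^s}\hat f(\bsk)\,q^{-m}\sum_{h=0}^{q^m-1}\wal_{\bsk}(\bsx_h)$. Since each $\bsx_h$ has at most $n$ nonzero base-$q$ digits, $\wal_{\bsk}(\bsx_h)$ depends only on the first $n$ digits of each $k_j$; reducing $\bsk$ modulo $q^n$ in each coordinate and applying Lemma~\ref{fqwalsum} shows the inner average equals $1$ when $\bsk\in\Dcal\cup\{\bszero\}$ and $0$ otherwise. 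Combined with $\hat f(\bszero)=I_s(f)$ (because $\wal_{\bszero}\equiv1$) this yields $I_s(f)-Q_{q^m,s}(f)=-\sum_{\bsk\in\Dcal}\hat f(\bsk)$.

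\emph{From the identity to (\ref{wce_eq}) and (\ref{initialerror}).} Writing $\Dcal$ as the disjoint union over $\emptyset\neq u\subseteq\cS$ of the sets $\{(\bsk_u,\bszero_{\cS\setminus u}):\bsk_u\in\Dcal_u^\ast\}$ and inserting the norm bound for $|\hat f(\bsk_u,\bszero_{\cS\setminus u})|$ produces~(\ref{eq_bounderror}), hence the upper bound in~(\ref{wce_eq}). For the matching lower bound I would test with the function $g$ whose Walsh coefficients are $\hat g(\bsk)=\gamma_u\,r_{q,\vartheta}(\bsk_u)$, $u$ the support of $\bsk$: the same geometric-decay estimate gives $\sum_{\bsk}|\hat g(\bsk)|^2<\infty$, so $g\in\LL_2([0,1)^s)$; one reads off $\|g\|_{\EE_{s,q,\vartheta,\bsgamma}}=1$ directly from the definition of the norm; and since every term is nonnegative, $|I_s(g)-Q_{q^m,s}(g)|=\sum_{\bsk\in\Dcal}\hat g(\bsk)=\sum_{\emptyset\neq u\subseteq\cS}\gamma_u\sum_{\bsk_u\in\Dcal_u^\ast}r_{q,\vartheta}(\bsk_u)$, which establishes~(\ref{wce_eq}). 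For the initial error, $e(Q_{0,s},\EE_{s,q,\vartheta,\bsgamma})=\sup_{\|f\|\le1}|\hat f(\bszero)|$, and since $r_{q,\vartheta}$ of the empty index equals $q^{0}=1$, the $u=\emptyset$ term of the norm is exactly $\gamma_\emptyset^{-1}|\hat f(\bszero)|$; hence $|\hat f(\bszero)|\le\gamma_\emptyset\|f\|_{\EE_{s,q,\vartheta,\bsgamma}}$ with equality for $f\equiv\gamma_\emptyset$, which is~(\ref{initialerror}).

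Apart from these two points the argument is bookkeeping. The one place that genuinely needs care is the convergence of $\sum_{k\ge0}q^{-\mu_{q,\vartheta}(k)}$ (and of $\sum_{k\ge0}q^{-2\mu_{q,\vartheta}(k)}$, for the $\LL_2$-membership of $g$), which is exactly where the hypothesis $\vartheta>1$ is used: since $\mu_{q,\vartheta}(k)$ does not grow once $k$ has more than $\delta+1$ nonzero digits, the number of wavenumbers sharing a given value of $a_{\delta+1}$ grows like $q^{a_{\delta+1}-1}$, and one must check that this multiplicity is outweighed by the factor $q^{-\lambda a_{\delta+1}-\sum_{i\le\delta}a_i}$, which holds exactly when $\delta+\lambda=\vartheta>1$.
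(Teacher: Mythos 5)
Your proposal is correct and follows essentially the same route as the paper: expand $f$ in its (absolutely convergent) Walsh series, apply the character-sum property of the digital net (Lemma~\ref{fqwalsum}) to get $|I_s(f)-Q_{q^m,s}(f)|=\bigl|\sum_{\bsk\in\Dcal}\hat f(\bsk)\bigr|$, bound by the norm to obtain~(\ref{eq_bounderror}), and exhibit the extremal function with $\hat f(\bsk_u,\bszero_{\cS\setminus u})=\gamma_u r_{q,\vartheta}(\bsk_u)$ to get equality, together with the $u=\emptyset$ term for the initial error. You merely supply details the paper leaves implicit (reduction of $\bsk$ modulo $q^n$ before invoking Lemma~\ref{fqwalsum}, absolute convergence via $\vartheta>1$, and $\LL_2$-membership of the extremal function), all of which check out.
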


In the following lemma we establish an upper bound on the sum
$\sum_{\bsk_u \in \Dcal_u^\ast} r_{q,\vartheta}(\bsk_u)$ for digital
$(t,\alpha,\beta, n\times m,s)$-nets over $\FF_q$. The proof is
similar to \cite[Lemma~6]{Dick05}.
\begin{lemma}\label{lem_boundsum}
Let $\vartheta > 1$ be a real number, $q \ge 2$ be a prime power,
$C_1,\ldots, C_s \in \FF_q^{n\times m}$ be the generating matrices
of a digital $(t,\lceil \vartheta \rceil,\beta,n \times m,s)$-net
over $\FF_q$ with $0 < \beta \le 1$ and let $\Dcal_u^\ast =
\Dcal_u^\ast((C_j)_{j\in u})$. For all $\emptyset \neq u \subseteq
\cS$ we have: if $\vartheta$ is not an integer it follows that
$$\sum_{\bsk_u\in \Dcal_u^\ast} r_{q,\vartheta}(\bsk_u) \le
C_{|u|,q,\vartheta} (\beta n - t + \lceil\vartheta\rceil)^{|u|
\lceil\vartheta\rceil - 1} q^{-\vartheta\lfloor (\beta n -
t)/\lceil\vartheta \rceil \rfloor},$$ where $$C_{|u|,q,\vartheta} =
q^{|u|\lceil\vartheta\rceil}
((q-q^{\vartheta-\lfloor\vartheta\rfloor })^{-1} +
(1-q^{(1-\vartheta)/\lceil\vartheta\rceil})^{-|u|\lceil\vartheta\rceil})$$
and if $\vartheta$ is an integer it follows that $$\sum_{\bsk_u\in
\Dcal_u^\ast} r_{q,\vartheta}(\bsk_u) \le C'_{|u|,q,\vartheta}
(\beta n - t + \vartheta)^{|u| \vartheta} q^{-(\beta n - t)},$$
where $$C'_{|u|,q,\vartheta} = q^{|u|\vartheta} (q^{-1} +
(1-q^{1/\vartheta-1})^{-|u|\vartheta}).$$
\end{lemma}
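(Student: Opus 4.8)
The plan is to bound the sum $\sum_{\bsk_u \in \Dcal_u^\ast} r_{q,\vartheta}(\bsk_u)$ by organizing the dual net $\Dcal_u^\ast$ according to the value of $\mu_{q,\vartheta}(\bsk_u)$, or rather by a related counting function that is easier to handle via the net property. First I would set $\alpha = \lceil \vartheta \rceil$ and recall from the remark following Definition~\ref{def_net} that the digital $(t,\alpha,\beta,n\times m,s)$-net property gives $\mu_{q,\alpha}(\bsk_u) > \beta n - t$ for every $\bsk_u \in \Dcal_u^\ast$. The relation between $\mu_{q,\alpha}$ (the integer-smoothness version used in the net definition) and $\mu_{q,\vartheta}$ (which appears in $r_{q,\vartheta}$) must be made explicit: for $\bsk_u$ with components $k_j$ having $v_j$ nonzero digits at positions $a_{j,1} > \cdots > a_{j,v_j}$, the quantity $\mu_{q,\alpha}(k_j)$ truncates the digit-position sum at $\alpha$ terms while $\mu_{q,\vartheta}(k_j)$ truncates at $\delta = \alpha - 1$ full terms plus $\lambda a_{j,\delta+1}$ (when $\vartheta$ is not an integer) or at $\delta = \vartheta - 1$ full terms (when $\vartheta$ is an integer). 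This gives a linear inequality of the form $\mu_{q,\vartheta}(\bsk_u) \ge c\,\mu_{q,\alpha}(\bsk_u) - (\text{correction})$ on the relevant range, which will ultimately produce the exponent $\vartheta\lfloor(\beta n-t)/\lceil\vartheta\rceil\rfloor$.

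Next I would stratify: write $\sum_{\bsk_u \in \Dcal_u^\ast} r_{q,\vartheta}(\bsk_u) = \sum_{\bsk_u \in \Dcal_u^\ast} q^{-\mu_{q,\vartheta}(\bsk_u)}$ and split off, for each coordinate $j \in u$, the ``large digits'' (positions contributing a full weight, i.e. the top $\delta$ or so) from the ``tail digits'' (everything below), since the tail digits are completely unconstrained by the net property and sum geometrically. For the tail in each coordinate one gets a factor like $(1 - q^{-1})^{-1}$ or, after the $\lambda$-weighting, $(1 - q^{-\lambda})^{-1}$; collecting these over the $|u|$ coordinates and over the two regimes ($\vartheta$ integer vs.\ not) produces the constants $C_{|u|,q,\vartheta}$ and $C'_{|u|,q,\vartheta}$ as stated, with the $q^{|u|\lceil\vartheta\rceil}$ prefactor coming from summing over the $\kappa$-digits (each in $\{1,\ldots,q-1\}$) at the $\le \lceil\vartheta\rceil$ top positions of each coordinate. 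The remaining sum is then over the configurations of top digit-positions $(a_{j,l})$ subject to $\sum_{j\in u}\sum_{l=1}^{\min(v_j,\alpha)} a_{j,l} \le$ (something tied to $\beta n - t$ via the dual-net bound), weighted by $q^{-\vartheta(\ldots)}$; this is a finite sum whose number of terms is polynomial in $\beta n - t$, giving the $(\beta n - t + \lceil\vartheta\rceil)^{|u|\lceil\vartheta\rceil - 1}$ factor, while the geometric decay in the sum's parameter yields $q^{-\vartheta\lfloor(\beta n-t)/\lceil\vartheta\rceil\rfloor}$.

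Concretely the step-by-step would be: (1) fix $u$, set $\alpha = \lceil\vartheta\rceil$, and translate ``$\bsk_u \in \Dcal_u^\ast$'' into the inequality $\sum_{j\in u}\sum_{l=1}^{\min(v_j,\alpha)} a_{j,l} > \beta n - t$ using the net property; (2) for each $\bsk_u$, bound $r_{q,\vartheta}(\bsk_u)$ from above by a product over $j\in u$ of (a geometric tail factor in the digits below position $\delta+1$) times $q^{-(\text{weighted sum of the top} \le \alpha \text{ positions})}$; (3) sum the tail factors to get the constant; (4) handle the residual finite sum over top-position configurations, splitting into the case where every coordinate already has $\ge \alpha$ nonzero digits (``saturated'') versus the case where some coordinate has fewer — in the saturated case one can relabel and use that $\min(v_j,\alpha)=\alpha$, in the unsaturated case the full $\mu_{q,\vartheta}$ weight of that coordinate is already counted; (5) bound the number of configurations with a given value of the constrained sum by a binomial-type count that is $\Landau((\beta n - t)^{|u|\alpha - 1})$, and sum the geometric series in the constrained-sum value, the minimal value of which is $\lfloor(\beta n - t)/\alpha\rfloor \cdot \alpha$ roughly, giving the stated $q$-power. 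The two displayed bounds (integer and non-integer $\vartheta$) then follow by assembling these pieces, being careful that in the non-integer case the bottom of the $\alpha$ top positions carries weight $\lambda = \vartheta - \lfloor\vartheta\rfloor$ rather than $1$, which is exactly what turns $q^{-1}$ into $q^{-(\vartheta-\lfloor\vartheta\rfloor)}$ in the $C_{|u|,q,\vartheta}$ formula and accounts for the $(1-q^{(1-\vartheta)/\lceil\vartheta\rceil})$ factor from summing $q^{-\lambda a}$ over admissible position gaps.

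The main obstacle I expect is bookkeeping the relationship between $\mu_{q,\alpha}$ and $\mu_{q,\vartheta}$ cleanly enough that the stratification in step (4)–(5) produces exactly the claimed polynomial degree $|u|\lceil\vartheta\rceil - 1$ and exactly the floor-function exponent $\lfloor(\beta n - t)/\lceil\vartheta\rceil\rfloor$ — in particular making sure that coordinates with fewer than $\alpha$ nonzero digits are absorbed without loss, and that the geometric-series summation over digit-position gaps (which must respect the strict ordering $a_{j,1} > \cdots > a_{j,v_j}$) converges with the stated constants. This is essentially the technical heart of \cite[Lemma~6]{Dick05} adapted to the non-square $n\times m$ setting and to non-integer smoothness $\vartheta$, so I would follow that proof closely, inserting the $\lambda$-weighted tail estimates where the fractional part enters.
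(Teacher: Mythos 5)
Your overall architecture --- stratify the dual net by the positions of the top $\lceil\vartheta\rceil$ nonzero digits of each coordinate, use the net property to kill configurations with $\sum_{j}\sum_{l}i_{j,l}\le\beta n-t$, then combine a binomial count of configurations with a geometric series in the constrained sum --- is indeed the skeleton of the paper's proof. But there is a genuine gap in your steps (2)--(3), and it sits exactly at the technical heart of the lemma. You assert that the ``tail digits'' (those below position $a_{j,\lceil\vartheta\rceil}$) are ``completely unconstrained by the net property and sum geometrically,'' producing factors like $(1-q^{-1})^{-1}$ or $(1-q^{-\lambda})^{-1}$. Neither half of this is true. The weight $r_{q,\vartheta}(\bsk_u)=q^{-\mu_{q,\vartheta}(\bsk_u)}$ depends \emph{only} on the top $\lceil\vartheta\rceil$ digit positions of each $k_j$ and is constant in all lower digits; an unconstrained sum over the tails therefore contributes a factor of order $q^{\sum_{j\in u}(i_{j,\lceil\vartheta\rceil}-1)}$, not a bounded geometric series, and since the weight only pays $q^{-\lambda a_{j,\lceil\vartheta\rceil}}$ (with $\lambda\le 1$) for the lowest counted position, the resulting sum over $i_{j,\lceil\vartheta\rceil}$ would diverge. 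The lemma is only true because the tails \emph{are} constrained: for a fixed pattern of leading digits, the admissible tails $\vec{l}$ are the solutions of a linear system $B\vec{l}=\vec{g}$, where $B$ collects the first $i_{j,\lceil\vartheta\rceil}-1$ rows of the generating matrices, and the net property is invoked a \emph{second} time to control the rank of $B$.

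Concretely, the paper's proof shows that $|\Dcal^\ast_{u}(\bsi)|$ is at most $(q-1)^{|u|\lceil\vartheta\rceil}$ (one tail per choice of leading digits) when $\sum_{j\in u} i_{j,\lceil\vartheta\rceil}\le(\beta n-t)/\lceil\vartheta\rceil$, because then the columns of $B$ are linearly independent, and is at most $(q-1)^{|u|\lceil\vartheta\rceil}\,q^{\sum_j i_{j,\lceil\vartheta\rceil}-\lfloor(\beta n-t)/\lceil\vartheta\rceil\rfloor}$ otherwise, by a rank/kernel-dimension argument. The sum is then split into two pieces $S_1$ and $S_2$ according to these two regimes, and it is precisely this split that produces the two terms $(q-q^{\vartheta-\lfloor\vartheta\rfloor})^{-1}$ and $(1-q^{(1-\vartheta)/\lceil\vartheta\rceil})^{-|u|\lceil\vartheta\rceil}$ in $C_{|u|,q,\vartheta}$, as well as the floor in the exponent $q^{-\vartheta\lfloor(\beta n-t)/\lceil\vartheta\rceil\rfloor}$ (the worst case occurs when all of $\beta n-t$ is concentrated on the bottom counted positions, each weighted only by $\lambda$ or absorbed into the multiplicity $q^{\sum_j i_{j,\lceil\vartheta\rceil}}$). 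Without this second application of the net property your step (5) cannot produce the claimed exponent, and your step (3) produces constants from the wrong source. Since you say you would follow \cite[Lemma~6]{Dick05} closely you would likely recover this counting argument in the execution, but as written the proposal's mechanism for handling the low-order digits is incorrect and would not yield the stated bound.
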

\begin{proof}
To simplify the notation we prove the result only for $u = \cS$. For all other subsets the result follows by the same arguments.

We first consider the case where $\vartheta > 1$ is not an integer. We partition the set $\Dcal^\ast_{\cS}$ into parts where the highest digits of $k_j$ are prescribed and we count the number of solutions of $C_1^\top \vec{k}_1 + \cdots + C_s^\top \vec{k}_s = \vec{0}$.
For $j = 1,\ldots, s$ let now $i_{j,\lceil \vartheta \rceil} < \cdots < i_{j,1}$ with $i_{j,1} \ge 1$.  Note that we now allow $i_{j,l} < 1$, in which case the contributions of those $i_{j,l}$ are to be ignored. This notation is adopted in order to avoid considering many special cases. Further we write $\bsi_{s,\lceil \vartheta \rceil} = (i_{1,1},\ldots, i_{1,\lceil \vartheta\rceil},\ldots, i_{s,1},\ldots, i_{s,\lceil \vartheta \rceil})$ and define
\begin{eqnarray*}
\Dcal^\ast_{\cS}(\bsi_{s,\lceil\vartheta\rceil}) &=& \{\bsk \in \Dcal^\ast_{\cS}: k_j = \lfloor \kappa_{j,1} q^{i_{j,1}-1} + \cdots + \kappa_{j,\lceil\vartheta\rceil} q^{i_{j,\lceil\vartheta\rceil}-1} + l_j\rfloor \\ && \mbox{ with } 0 \le l_j < q^{i_{j,\lceil\vartheta\rceil}-1} \mbox{and } 1 \le \kappa_{j,l} < q \mbox{ for } j = 1,\ldots, s\},
\end{eqnarray*}
where $\lfloor\cdot \rfloor$ just means that the contributions of $i_{j,l} < 1$ are to be ignored. Let $\mu(\bsi_{s,\lceil\vartheta\rceil}) = i_{1,1} + \cdots + i_{1,\lceil\vartheta\rceil-1} + (\vartheta - \lfloor\vartheta\rfloor) i_{1,\lceil\vartheta\rceil} + \cdots +  i_{s,1} + \cdots + i_{s,\lceil\vartheta\rceil-1} + (\vartheta - \lfloor\vartheta\rfloor) i_{s,\lceil\vartheta\rceil}$.

Then we have
\begin{eqnarray}\label{sum_Qstar}
\sum_{\bsk_\cS\in \Dcal_\cS^\ast} r_{q,\vartheta}(\bsk_\cS) & = & \sum_{i_{1,1}=1}^\infty \cdots \sum_{i_{1,\lceil\vartheta\rceil}= 1}^{i_{1,\lceil\vartheta\rceil-1}-1} \cdots \sum_{i_{s,1}=1}^\infty \cdots \sum_{i_{s,\lceil\vartheta\rceil}=1}^{i_{s,\lceil\vartheta\rceil-1}-1} \frac{| \Dcal^\ast_{S}(\bsi_{s,\lceil\vartheta\rceil})|}{q^{\mu(\bsi_{s,\lceil\vartheta\rceil})}}.
\end{eqnarray}
Some of the sums above can be empty in which case we just set the corresponding summation index $i_{j,l}=0$.

Note that by the $(t,\lceil\vartheta\rceil,\beta,n \times m,s)$-net
property we have that
$|\Dcal^\ast_{\cS}(\bsi_{s,\lceil\vartheta\rceil})| = 0$ as long as
$i_{1,1} + \cdots + i_{1,\lceil\vartheta\rceil} + \cdots + i_{s,1} +
\cdots + i_{s,\lceil\vartheta\rceil} \le \beta n - t$. Hence let now
$0\le i_{1,1}, \ldots, i_{s,\lceil\vartheta\rceil}$ be given such
that $i_{1,1},\ldots, i_{s,1}\ge 1$, $i_{j,\lceil\vartheta\rceil}<
\cdots < i_{j,1}$ for $j = 1,\ldots, s$ and where if $i_{j,l} < 1$
we set $i_{j,l}=0$ (in which case we also have $i_{j,l+1}= i_{j,l+2}
= \ldots = 0$ and the inequalities $i_{j,l} > \cdots >
i_{j,\lceil\vartheta\rceil}$ are ignored)  and  $i_{1,1} + \cdots +
i_{1,\lceil\vartheta\rceil} + \cdots + i_{s,1} + \cdots  +
i_{s,\lceil\vartheta\rceil} > \beta n - t$. We now need to estimate
$|\Dcal^\ast_{\cS}(\bsi_{s,\lceil\vartheta\rceil})|$, that is we
need to count the number of $\bsk \in \Dcal^\ast_{\cS}$ with $k_j =
\lfloor \kappa_{j,1} b^{i_{j,1}-1} + \cdots +
\kappa_{j,\lceil\vartheta\rceil}b^{i_{j,\lceil\vartheta\rceil}-1} +
l_j\rfloor$.

There are at most $(q-1)^{\lceil\vartheta\rceil s}$ choices for $\kappa_{1,1},\ldots, \kappa_{s,\lceil\vartheta\rceil}$ (we write at most because if $i_{j,l} < 1$ then the corresponding $\kappa_{j,l}$ does not have any effect and therefore need not to be included).

Let now $1\le \kappa_{1,1},\ldots, \kappa_{s,\lceil\vartheta\rceil} < q$ be given and define $$\vec{g} = \kappa_{1,1} c_{1,i_{1,1}}^\top + \cdots + \kappa_{1,\lceil\vartheta\rceil} c_{1,i_{1,\lceil\vartheta\rceil}}^\top + \cdots + \kappa_{s,1} c_{s,i_{s,1}}^\top + \cdots + \kappa_{s,\lceil\vartheta\rceil} c_{s,i_{s,\lceil\vartheta\rceil}}^\top,$$ where we set $c^\top_{j,l}=0$ if $l < 1$ or $l > n$. Further let $$B = (c_{1,1}^\top,\ldots, c_{1,i_{1,\lceil\vartheta\rceil}-1}^\top,\ldots, c_{s,1}^\top,\ldots, c_{s,i_{s,\lceil\vartheta\rceil}-1}^\top).$$ Now the task is to count the number of solutions $\vec{l}$ of $B \vec{l} = \vec{g}$.

As long as the columns of $B$ are linearly independent the number of solutions can at most be $1$. By the $(t,\lceil\vartheta\rceil,\beta,n \times m,s)$-net property this is certainly the case if (we write $(x)_+ = \max(x,0)$)
\begin{eqnarray*}
(i_{1,\lceil\vartheta\rceil}-1)_+ + \cdots + (i_{1,\lceil\vartheta\rceil}-\lceil\vartheta\rceil)_+ + \cdots &&\\ + (i_{s,\lceil\vartheta\rceil}-1)_+ + \cdots + (i_{s,\lceil\vartheta\rceil}-\lceil\vartheta\rceil)_+    &\le & \lceil\vartheta\rceil (i_{1,\lceil\vartheta\rceil} + \cdots + i_{s,\lceil\vartheta\rceil})  \\ & \le & \beta n - t,
\end{eqnarray*}
that is, as long as $$i_{1,\lceil\vartheta\rceil} + \cdots + i_{s,\lceil\vartheta\rceil} \le \frac{\beta n - t}{\lceil\vartheta\rceil}.$$

Let now $i_{1,\lceil\vartheta\rceil} + \cdots + i_{s,\lceil\vartheta\rceil} > \frac{\beta n - t}{\lceil\vartheta\rceil}$. Then by considering the rank of the matrix $B$ and the dimension of the space of solutions of $B\vec{l} = \vec{0}$ it follows the number of solutions of $B\vec{l} = \vec{g}$ is smaller or equal to $q^{i_{1,\lceil\vartheta\rceil} + \cdots + i_{s,\lceil\vartheta\rceil}-\lfloor(\beta n - t)/\lceil\vartheta\rceil\rfloor}$. Thus we have $$|\Dcal^\ast_{\cS}(\bsi_{s,\lceil\vartheta\rceil})| = 0$$ if $\sum_{j=1}^s \sum_{l = 1}^{\lceil\vartheta\rceil} i_{j,l}  \le \beta n -t$, we have $$|\Dcal^\ast_{\cS}(\bsi_{s,\lceil\vartheta\rceil})| = (q-1)^{s\lceil\vartheta\rceil}$$ if $\sum_{j=1}^s \sum_{l=1}^{\lceil\vartheta\rceil} i_{j,l}  > \beta n -t$ and $\sum_{j=1}^s i_{j,\lceil\vartheta\rceil} \le \frac{\beta n -t}{\lceil\vartheta\rceil}$ and finally we have $$ |\Dcal^\ast_{\cS}(\bsi_{s,\lceil\vartheta\rceil})| \le (q-1)^{s \lceil\vartheta\rceil} q^{i_{1,\lceil\vartheta\rceil} + \cdots + i_{s,\lceil\vartheta\rceil} - \lfloor (\beta n -t)/\lceil\vartheta\rceil\rfloor}$$ if $\sum_{j=1}^s\sum_{l=1}^{\lceil\vartheta\rceil} i_{j,l} > \beta n -t$ and $\sum_{j=1}^s i_{j,\lceil\vartheta\rceil} > \frac{\beta n -t}{\lceil\vartheta\rceil}$.

We estimate the sum (\ref{sum_Qstar}) now. Let $S_1$ be the sum in (\ref{sum_Qstar}) where $ i_{1,1} + \cdots + i_{s,\lceil\vartheta\rceil} > \beta n -t$ and $i_{1,\lceil\vartheta\rceil} + \cdots + i_{s,\lceil\vartheta\rceil} \le \frac{\beta n -t}{\lceil\vartheta\rceil}$. Let $l_1 = i_{1,1} + \cdots + i_{1,\lceil\vartheta\rceil-1} + \cdots + i_{s,1} + \cdots + i_{s,\lceil\vartheta\rceil-1}$ and let $l_2 = i_{1,\lceil\vartheta\rceil} + \cdots + i_{s,\lceil\vartheta\rceil}$. Let $A(l_1 + l_2)$ denote the number of admissible choices of $i_{1,1},\ldots,i_{s,\lceil\vartheta\rceil}$ such that $l_1 + l_2 = i_{1,1} + \cdots + i_{s,\lceil\vartheta\rceil}$. Then we have $$S_1 = (q-1)^{s \lceil\vartheta\rceil} \sum_{l_2 = 0}^{\lfloor \frac{\beta n -t}{\lceil\vartheta\rceil}\rfloor} \frac{1}{q^{(\vartheta - \lfloor \vartheta\rfloor) l_2}}  \sum_{l_1 = \beta n -t +1 - l_2}^{\infty} \frac{A(l_1+l_2)}{b^{l_1}}.$$  We have $A(l_1 + l_2) \le {l_1+ l_2 +s\lceil\vartheta\rceil -1 \choose s\lceil\vartheta\rceil - 1}$ and hence we obtain $$S_1 \le (q-1)^{s\lceil\vartheta\rceil}  \sum_{l_2 = 0}^{\lfloor \frac{\beta n -t}{\lceil\vartheta\rceil} \rfloor} \frac{1}{q^{(\vartheta - \lfloor \vartheta\rfloor) l_2}}  \sum_{l_1 = \beta n -t +1 - l_2}^{\infty} \frac{1}{q^{l_1}} {l_1+l_2 +s\lceil\vartheta\rceil - 1 \choose s \lceil\vartheta\rceil -1}.$$

From a result by Matou\v{s}ek~\cite[Lemma~2.18]{matou}, see also \cite[Lemma~6]{DP05}, we have
\begin{eqnarray*}
\lefteqn{(q-1)^{s\lceil\vartheta\rceil} \sum_{l_1 = \beta n -t +1 - l_2}^{\infty} \frac{1}{q^{l_1}} {l_1+l_2 +s\lceil\vartheta\rceil - 1 \choose s \lceil\vartheta\rceil -1} } \qquad\qquad\qquad\qquad\qquad \\ & \le & q^{l_2 - \beta n + t - 1 + s\lceil\vartheta\rceil} {\beta n - t + s\lceil\vartheta\rceil \choose s \lceil\vartheta\rceil - 1}
\end{eqnarray*}
and further we have $$ \sum_{l_2 = 0}^{\lfloor \frac{\beta n -t}{\lceil\vartheta\rceil}\rfloor} \frac{q^{l_2}}{q^{(\vartheta - \lfloor \vartheta\rfloor) l_2}} = \sum_{l_2 = 0}^{\lfloor \frac{\beta n -t}{\lceil\vartheta\rceil}\rfloor} q^{l_2(\lceil\vartheta\rceil - \vartheta)}  = \frac{q^{(\lceil\vartheta\rceil - \vartheta)(\lfloor (\beta n - t)/\lceil\vartheta\rceil \rfloor + 1)}-1}{q^{\lceil\vartheta\rceil - \vartheta}-1}.$$ Thus we obtain
\begin{eqnarray*}
S_1 &\le & \frac{q^{(\lceil\vartheta\rceil - \vartheta)(\lfloor (\beta n - t)/\lceil\vartheta\rceil \rfloor + 1)}-1}{q^{\lceil\vartheta\rceil - \vartheta}-1} q^{- \beta n + t - 1 + s\lceil\vartheta\rceil} {\beta n - t + s\lceil\vartheta\rceil \choose s \lceil\vartheta\rceil - 1} \\ & \le & \frac{q^{s\lceil\vartheta\rceil-1}}{1-q^{\vartheta- \lceil\vartheta\rceil}} {\beta n - t + s\lceil\vartheta\rceil \choose s \lceil\vartheta\rceil - 1} q^{-\vartheta\lfloor (\beta n - t)/\lceil\vartheta \rceil \rfloor}.
\end{eqnarray*}

Let $S_2$ be the part of (\ref{sum_Qstar}) for which $ i_{1,1} + \cdots + i_{s,\lceil\vartheta\rceil} > \beta n -t$ and $i_{1,\lceil\vartheta\rceil} + \cdots + i_{s,\lceil\vartheta\rceil} > \frac{\beta n -t}{\lceil\vartheta\rceil}$, i.e., we have
\begin{eqnarray*}
S_2 & \le & (q-1)^{s\lceil\vartheta\rceil} \sum_{i_{1,1}=1}^\infty \cdots \sum_{i_{1,\lceil\vartheta\rceil}= 1}^{i_{1,\lceil\vartheta\rceil-1}-1} \cdots \\ && \sum_{i_{s,1}=1}^\infty \cdots \sum_{i_{s,\lceil\vartheta\rceil}=1}^{i_{s,\lceil\vartheta\rceil-1}-1} \frac{q^{- \lfloor (\beta n - t)/\lceil\vartheta\rceil \rfloor} q^{(i_{1,\lceil\vartheta\rceil} + \cdots + i_{s,\lceil\vartheta\rceil}) (\lceil\vartheta\rceil -\vartheta)}}{q^{i_{1,1} + \cdots + i_{1,\lceil\vartheta\rceil-1} + \cdots + i_{s,1} + \cdots + i_{s,\lceil\vartheta\rceil-1}}},
\end{eqnarray*}
where we have the additional conditions $ i_{1,1} + \cdots + i_{s,\lceil\vartheta\rceil} > \beta n -t$ and $i_{1,\lceil\vartheta\rceil} + \cdots + i_{s,\lceil\vartheta\rceil} > \frac{\beta n -t}{\lceil\vartheta\rceil}$. As above let $l_1 = i_{1,1} + \cdots + i_{1,\lceil\vartheta\rceil-1} + \cdots + i_{s,1} + \cdots + i_{s,\lceil\vartheta\rceil-1}$ and let $l_2 = i_{1,\lceil\vartheta\rceil} + \cdots + i_{s,\lceil\vartheta\rceil}$. Let $A(l_1 + l_2)$ denote the number of admissible choices of $i_{1,1},\ldots,i_{s,\lceil\vartheta\rceil}$ such that $l_1 + l_2 = i_{1,1} + \cdots + i_{s,\lceil\vartheta\rceil}$. Note that $l_1 >  \lfloor \vartheta \rfloor l_2$. Then we have $A(l_1 + l_2) \le {l_1+ l_2 +s\lceil\vartheta\rceil -1 \choose s\lceil\vartheta\rceil - 1}$ and hence we obtain
\begin{eqnarray*}
S_2  & \le &  (q-1)^{s\lceil\vartheta\rceil} q^{- \lfloor (\beta n - t)/\lceil\vartheta\rceil \rfloor} \\  && \sum_{l_2 = \lfloor \frac{\beta n -t}{\lceil\vartheta\rceil} \rfloor +1}^\infty q^{(\lceil\vartheta\rceil - \vartheta) l_2}  \sum_{l_1 = \lfloor\vartheta\rfloor l_2+1}^{\infty} \frac{1}{q^{l_1}} {l_1+l_2 +s\lceil\vartheta\rceil - 1 \choose s \lceil\vartheta\rceil -1} \\ & = &  (q-1)^{s\lceil\vartheta\rceil} q^{- \lfloor (\beta n - t)/\lceil\vartheta\rceil \rfloor} \\ && \sum_{l_2 = \lfloor \frac{\beta n -t}{\lceil\vartheta\rceil} \rfloor +1}^\infty \sum_{l_1 = 0}^{\infty} q^{-l_1+l_2-1-l_2\vartheta} {l_1+l_2 + \lfloor \vartheta\rfloor l_2 -1 +s\lceil\vartheta\rceil - 1 \choose s \lceil\vartheta\rceil -1}.
\end{eqnarray*}

By using again Matou\v{s}ek~\cite[Lemma~2.18]{matou}, see also \cite[Lemma~6]{DP05}, we have
\begin{eqnarray*}
\lefteqn{ (q-1)^{s\lceil\vartheta\rceil} \sum_{l_1 = 0}^{\infty} q^{-l_1+l_2-1-l_2\vartheta} {l_1+l_2 + \lfloor \vartheta \rfloor l_2 -1 +s\lceil\vartheta\rceil - 1 \choose s \lceil\vartheta\rceil -1} } \qquad\qquad\qquad\qquad\qquad \\ & \le & q^{s\lceil\vartheta\rceil} q^{l_2(1-\vartheta)-1} {l_2 \lceil \vartheta\rceil -1 +s \lceil\vartheta\rceil - 1 \choose s \lceil\vartheta\rceil -1}
\end{eqnarray*}
and also
\begin{eqnarray*}
\lefteqn{ q^{s\lceil\vartheta\rceil - 1 - \lfloor (\beta n - t)/\lceil\vartheta\rceil \rfloor} \sum_{l_2 = \lfloor \frac{\beta n -t}{\lceil\vartheta\rceil} \rfloor +1}^\infty q^{l_2(1-\vartheta)} {l_2 \lceil \vartheta \rceil - 1 +s \lceil\vartheta\rceil - 1 \choose s \lceil\vartheta\rceil -1} } \\ & \le & q^{s\lceil\vartheta\rceil - 1 - \lfloor (\beta n - t)/\lceil\vartheta\rceil \rfloor} \sum_{l_2 = \beta n -t}^\infty q^{l_2(1-\vartheta)/\lceil\vartheta\rceil} {l_2 + \lceil\vartheta\rceil - 1 +s \lceil\vartheta\rceil - 1 \choose s \lceil\vartheta\rceil -1}  \\ & \le & q^{s\lceil\vartheta\rceil} (1-q^{(1-\vartheta)/\lceil\vartheta\rceil})^{-s\lceil\vartheta\rceil}  {\beta n - t + \lceil\vartheta\rceil-2 + s \lceil\vartheta\rceil \choose s \lceil\vartheta\rceil -1} q^{-\vartheta  (\beta n -t)/\lceil\vartheta\rceil}.
\end{eqnarray*}
Hence we have
\begin{equation*}
S_{2} \le q^{s\lceil\vartheta\rceil} (1-q^{(1-\vartheta)/\lceil\vartheta\rceil})^{-s\lceil\vartheta\rceil}  {\beta n - t + \lceil\vartheta\rceil- 2 + s \lceil\vartheta\rceil \choose s \lceil\vartheta\rceil -1} q^{-\vartheta  (\beta n -t)/\lceil\vartheta\rceil}.
\end{equation*}

Note that we have $\sum_{\bsk_\cS\in \Dcal_\cS^\ast}
r_{q,\vartheta}(\bsk_S) = S_1 + S_2$. Let $a \ge 1$ and $b\ge 0$ be
integers, then we have $${a + b \choose b} = \prod_{i=1}^b \left(1 +
\frac{a}{i}\right) \le (1+a)^b.$$ Therefore we obtain $$S_1 \le
\frac{q^{s\lceil\vartheta\rceil-1}}{1-q^{\vartheta-
\lceil\vartheta\rceil}} (\beta n - t + 2)^{s \lceil\vartheta\rceil -
1} q^{-\vartheta\lfloor (\beta n - t)/\lceil\vartheta \rceil
\rfloor}$$ and $$S_{2} \le q^{s\lceil\vartheta\rceil}
(1-q^{(1-\vartheta)/\lceil\vartheta\rceil})^{-s\lceil\vartheta\rceil}
(\beta n - t + \lceil\vartheta\rceil)^{ s \lceil\vartheta\rceil -1}
q^{-\vartheta  (\beta n -t)/\lceil\vartheta\rceil}.$$ Thus we have
$$\sum_{\bsk_\cS\in \Dcal_\cS^\ast} r_{q,\vartheta}(\bsk_\cS) \le
C_{s,q,\vartheta} (\beta n - t + \lceil\vartheta\rceil)^{s
\lceil\vartheta\rceil - 1} q^{-\vartheta\lfloor (\beta n -
t)/\lceil\vartheta \rceil \rfloor},$$ where $$C_{s,q,\vartheta} =
q^{s\lceil\vartheta\rceil} ((q-q^{\vartheta-\lfloor\vartheta\rfloor
})^{-1} +
(1-q^{(1-\vartheta)/\lceil\vartheta\rceil})^{-s\lceil\vartheta\rceil}).$$
The result follows for the case $0 < \vartheta - \lfloor \vartheta
\rfloor < 1$.

Let now $\vartheta > 1$ be an integer. Then using the same arguments as above it can be shown that $$S_1 \le  (\beta n - t + 2)^{s \vartheta} q^{-(\beta n - t)-1+s\vartheta}$$ and $$S_2 \le q^{s\vartheta} (1-q^{1/\vartheta-1})^{-s\vartheta}  (\beta n - t + \vartheta)^{ s \vartheta -1} q^{-(\beta n -t)}.$$
Thus we have $$\sum_{\bsk_\cS\in \Dcal_\cS^\ast} r_{q,\vartheta}(\bsk_\cS) \le C'_{s,q,\vartheta} (\beta n - t + \vartheta)^{s \vartheta} q^{-(\beta n - t)},$$ where $$C'_{s,q,\vartheta} = q^{s\vartheta} (q^{-1} + (1-q^{1/\vartheta-1})^{-s\vartheta}).$$ The result now follows.
\end{proof}

\begin{remark}\rm
We note that the above lemma does not hold for $\beta > 1$ in
general. Indeed, take for example $u = \{1\}$, then $\bsk_u = (k_1)$
and choose $k_1 = q^{n}$. Then the digit vector of the first $n$
digits of $q^n$ is $(0,\ldots,0)^\top$ and hence $C_1^\top \vec{k}_1
= \vec{0}$ and hence $\bsk_{(1)} \in \Dcal^\ast_{(1)}$. Thus
$$\sum_{\bsk_{(1)} \in \Dcal^\ast_{(1)}} r_{q,\vartheta}(\bsk_{(1)})
\ge q^{-n-1}$$ and hence a counterexample can be obtained for some
choices of $n,\beta, \vartheta$.

In \cite{Dick05} we did allow $\beta > 1$, but therein we had the additional assumption that the functions are periodic. In this case we were able to show that the Walsh coefficients $r_{q,\alpha}(\bsk,\bsl) = \prod_{j=1}^s r_{q,\alpha}(k_j,l_j)$ of the reproducing kernel also satisfy the additional property that $r_{q,\alpha}(q^m k_j,q^m k_j) = r_{q,\alpha}(q^m k_j) = q^{-2\alpha m}r_{q,\alpha}(k_j,k_j)$ for all $k_j, m \in \NN$, see \cite[Lemma~15]{Dick05}. Similarly, if we would also assume here that $r_{q,\vartheta}(q^n k) = q^{-\vartheta n} r_{q,\vartheta}(k)$ and $r_{q,\vartheta}(k)$ given as above if $q \not| k$, then the above counterexample would fail as then $r_{q,\vartheta}(q^n) = r_{q,\vartheta}(1 q^n) = q^{-\vartheta(n+1)} r_{q,\vartheta}(1)$.
\end{remark}

Using the above lemma we can now obtain an upper bound on the worst-case error.
\begin{theorem}\label{th_errorbounddignet}
Let $\vartheta > 1$ be a real number and $q \ge 2$ be a prime power.
The worst-case error for multivariate integration in the Walsh
space~$\EE_{s,q,\vartheta,\bsgamma}$ using a digital
$(t,\lceil\vartheta\rceil,\beta,n \times m, s)$-net over $\FF_q$,
with $0 < \beta \le 1$, as quadrature points is for non-integers
$\vartheta$ bounded by $$e(Q_{q^m,s},\EE_{s,q,\vartheta,\bsgamma})
\le  q^{-\vartheta\lfloor (\beta n - t)/\lceil\vartheta \rceil
\rfloor} \sum_{\emptyset \neq u \subseteq\cS} \gamma_u
C_{|u|,q,\vartheta} (\beta n - t + \lceil\vartheta\rceil)^{|u|
\lceil\vartheta\rceil - 1},$$ where $$C_{|u|,q,\vartheta} =
q^{|u|\lceil\vartheta\rceil}
((q-q^{\vartheta-\lfloor\vartheta\rfloor })^{-1} +
(1-q^{(1-\vartheta)/\lceil\vartheta\rceil})^{-|u|\lceil\vartheta\rceil}),$$
and if $\vartheta$ is an integer, the worst-case error is bounded by
$$e(Q_{q^m,s},\EE_{s,q,\vartheta,\bsgamma}) \le q^{-(\beta n - t)}
\sum_{\emptyset \neq u \subseteq\cS} \gamma_u C'_{|u|,q,\vartheta}
(\beta n - t + \vartheta)^{|u| \vartheta},$$ where
$$C'_{|u|,q,\vartheta} = q^{|u|\vartheta} (q^{-1} +
(1-q^{1/\vartheta-1})^{-|u|\vartheta}).$$
\end{theorem}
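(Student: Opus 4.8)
The plan is to combine the exact formula for the worst-case error with the summation bound just proved in Lemma~\ref{lem_boundsum}. Recall that by~(\ref{wce_eq}) we have
\[
e(Q_{q^m,s},\EE_{s,q,\vartheta,\bsgamma}) = \sum_{\emptyset \neq u \subseteq\cS} \gamma_u \sum_{\bsk_u \in \Dcal_u^\ast} r_{q,\vartheta}(\bsk_u).
\]
Hence the only thing left to do is to estimate, for each $\emptyset \neq u \subseteq \cS$, the inner sum $\sum_{\bsk_u \in \Dcal_u^\ast} r_{q,\vartheta}(\bsk_u)$ and then sum over $u$ against the weights $\gamma_u$.

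First I would invoke Theorem~\ref{th_prop}(iii): since $P$ is generated by matrices forming a digital $(t,\lceil\vartheta\rceil,\beta,n \times m,s)$-net over $\FF_q$, the submatrices $(C_j)_{j\in u}$ generate a digital $(t,\lceil\vartheta\rceil,\beta,n \times m,|u|)$-net over $\FF_q$ for every $\emptyset \neq u \subseteq \cS$ (deleting coordinates only makes the linear independence condition easier to satisfy, so the net property is inherited with the same $t$, $\alpha$, $\beta$). Then I apply Lemma~\ref{lem_boundsum} with dimension $|u|$ in place of $s$: in the non-integer case this gives
\[
\sum_{\bsk_u\in \Dcal_u^\ast} r_{q,\vartheta}(\bsk_u) \le C_{|u|,q,\vartheta}\, (\beta n - t + \lceil\vartheta\rceil)^{|u| \lceil\vartheta\rceil - 1}\, q^{-\vartheta\lfloor (\beta n - t)/\lceil\vartheta \rceil \rfloor},
\]
with $C_{|u|,q,\vartheta}$ as stated, and in the integer case the analogous bound with $C'_{|u|,q,\vartheta}$ and the exponent $q^{-(\beta n - t)}$.

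Finally I would substitute these estimates into the error formula. The factor $q^{-\vartheta\lfloor (\beta n - t)/\lceil\vartheta \rceil \rfloor}$ (respectively $q^{-(\beta n - t)}$) does not depend on $u$, so it can be pulled out of the sum over $u$, leaving exactly
\[
e(Q_{q^m,s},\EE_{s,q,\vartheta,\bsgamma}) \le q^{-\vartheta\lfloor (\beta n - t)/\lceil\vartheta \rceil \rfloor} \sum_{\emptyset \neq u \subseteq\cS} \gamma_u\, C_{|u|,q,\vartheta}\, (\beta n - t + \lceil\vartheta\rceil)^{|u| \lceil\vartheta\rceil - 1}
\]
in the non-integer case and the stated bound in the integer case. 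This is a genuinely short deduction; there is no real obstacle since the heavy lifting was done in Lemma~\ref{lem_boundsum}. The only point requiring a word of care is the appeal to the net property for the coordinate-restricted matrices, and the (routine) observation that the $u$-independent power of $q$ factors out of the weighted sum. I would also remark that since $0 < \beta \le 1$ the hypothesis of Lemma~\ref{lem_boundsum} is met, so the counterexample discussed in the preceding remark is not an issue here.
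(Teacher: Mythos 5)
Your proof is correct and is essentially the paper's own argument: the theorem is an immediate consequence of the exact error formula~(\ref{wce_eq}) combined with Lemma~\ref{lem_boundsum}, summed over $\emptyset \neq u \subseteq \cS$ with the $u$-independent power of $q$ pulled out. One small slip: Theorem~\ref{th_prop}(iii) concerns lowering the smoothness parameter $\alpha'$, not coordinate projections, but this is immaterial since Lemma~\ref{lem_boundsum} is already stated for all subsets $u$ (and the projection property you describe is indeed true for the reason you give).
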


As a direct consequence of Corollary~\ref{cor_smoothfwalsh} we obtain the following result.
\begin{corollary}\label{cor_errorbounddignet}
Let $\delta \ge 1$ be an integer, $0 < \lambda \le 1$ and $q \ge 2$
be a prime power. Then for any function $f:[0,1)^s \rightarrow
\real$ whose partial mixed derivatives up to order $\delta$ exist it
follows that the integration error using a digital
$(t,\delta+1,\beta,n \times m, s)$-net over $\FF_q$ with $0 < \beta
\le 1$ as quadrature points is for $0 < \lambda < 1$ bounded by
\begin{eqnarray*}
|I_s(f) - Q_{q^m,s}(f)| &\le &  q^{-(\delta+\lambda)\lfloor (\beta n
- t)/(\delta+1) \rfloor}  C_{\delta,s,q,\bsgamma}
N_{\delta,\lambda,\bsgamma}(f) \\ && \sum_{\emptyset \neq u
\subseteq\cS} \gamma_u  C_{|u|,q,\delta+\lambda} (\beta n - t +
\delta+1)^{|u| (\delta+1) - 1}
\end{eqnarray*}
and for $\lambda = 1$ the integration error is bounded by
\begin{eqnarray*}
|I_s(f) - Q_{q^m,s}(f)| & \le &  q^{-(\beta n - t)}
C_{\delta,s,q,\bsgamma} N_{\delta,\lambda,\bsgamma}(f) \\
&& \sum_{\emptyset \neq u \subseteq\cS} \gamma_u C'_{|u|,q,\delta+1}
(\beta n - t + \delta+1)^{|u| \delta+1},
\end{eqnarray*}
where the constant $C_{\delta,s,q,\bsgamma}$ is given in
Corollary~\ref{cor_smoothfwalsh} and the constants
$C_{|u|,q,\delta+\lambda}$ and $C'_{|u|,q,\delta+1}$ are given in
Theorem~\ref{th_errorbounddignet}.
\end{corollary}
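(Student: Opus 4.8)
The plan is to combine the worst-case error bound of Theorem~\ref{th_errorbounddignet} with the embedding of Corollary~\ref{cor_smoothfwalsh}, using the elementary fact that for any $f$ with $\|f\|_{\EE_{s,q,\vartheta,\bsgamma}}<\infty$ one has
\[
|I_s(f) - Q_{q^m,s}(f)| \le \|f\|_{\EE_{s,q,\vartheta,\bsgamma}}\; e(Q_{q^m,s},\EE_{s,q,\vartheta,\bsgamma}),
\]
which follows at once from the definition of the worst-case error by scaling $f$ by $1/\|f\|_{\EE_{s,q,\vartheta,\bsgamma}}$ (here we also use that, by Section~\ref{subsect_convergence}, the Walsh series of $f$ converges to $f$ at every point of $[0,1)^s$, so that $Q_{q^m,s}(f)$ is the quantity analysed via Lemma~\ref{fqwalsum}).

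The only thing to watch is the choice of the parameter $\vartheta$ and whether it is an integer. I would set $\vartheta = \delta+\lambda$. Since $\delta\ge 1$ is an integer and $0<\lambda\le 1$, we have $\lceil\vartheta\rceil = \delta+1$ in \emph{both} cases $\lambda=1$ and $0<\lambda<1$; hence a digital $(t,\delta+1,\beta,n\times m,s)$-net over $\FF_q$ is exactly a digital $(t,\lceil\vartheta\rceil,\beta,n\times m,s)$-net, so Theorem~\ref{th_errorbounddignet} applies verbatim with this $\vartheta$. For $0<\lambda<1$ the number $\vartheta=\delta+\lambda$ is not an integer, so the first bound of Theorem~\ref{th_errorbounddignet} gives
\[
e(Q_{q^m,s},\EE_{s,q,\delta+\lambda,\bsgamma}) \le q^{-(\delta+\lambda)\lfloor(\beta n - t)/(\delta+1)\rfloor}\sum_{\emptyset\neq u\subseteq\cS}\gamma_u\, C_{|u|,q,\delta+\lambda}\,(\beta n - t + \delta+1)^{|u|(\delta+1)-1};
\]
for $\lambda = 1$ the number $\vartheta=\delta+1$ is an integer, so the second bound of Theorem~\ref{th_errorbounddignet} gives
\[
e(Q_{q^m,s},\EE_{s,q,\delta+1,\bsgamma}) \le q^{-(\beta n - t)}\sum_{\emptyset\neq u\subseteq\cS}\gamma_u\, C'_{|u|,q,\delta+1}\,(\beta n - t + \delta+1)^{|u|(\delta+1)}.
\]

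Finally I would invoke Corollary~\ref{cor_smoothfwalsh}: if $N_{\delta,\lambda,\bsgamma}(f)=\infty$ there is nothing to prove, and otherwise the hypothesis that the partial mixed derivatives of $f$ up to order $\delta$ exist yields $f\in\EE_{s,q,\delta+\lambda,\bsgamma}$ with $\|f\|_{\EE_{s,q,\delta+\lambda,\bsgamma}}\le C_{\delta,q,s,\bsgamma}\,N_{\delta,\lambda,\bsgamma}(f)$. Substituting this into the displayed error inequality $|I_s(f)-Q_{q^m,s}(f)|\le \|f\|_{\EE_{s,q,\delta+\lambda,\bsgamma}}\,e(Q_{q^m,s},\EE_{s,q,\delta+\lambda,\bsgamma})$ and using the two bounds above for the two ranges of $\lambda$ produces exactly the two asserted estimates, with the constants $C_{\delta,s,q,\bsgamma}$, $C_{|u|,q,\delta+\lambda}$ and $C'_{|u|,q,\delta+1}$ as named in Corollary~\ref{cor_smoothfwalsh} and Theorem~\ref{th_errorbounddignet}. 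There is no real obstacle here — the corollary is a direct specialisation; the only points requiring a word of care are the identity $\lceil\delta+\lambda\rceil=\delta+1$ for all $0<\lambda\le1$ and the clean separation of the integer case $\lambda=1$ from the fractional case $0<\lambda<1$ so that the correct branch of Theorem~\ref{th_errorbounddignet} is used.
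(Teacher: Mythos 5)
Your proposal is correct and follows exactly the route the paper intends: the paper presents this corollary as a direct consequence of Corollary~\ref{cor_smoothfwalsh} combined with Theorem~\ref{th_errorbounddignet}, which is precisely your chaining of $|I_s(f)-Q_{q^m,s}(f)|\le \|f\|_{\EE_{s,q,\delta+\lambda,\bsgamma}}\, e(Q_{q^m,s},\EE_{s,q,\delta+\lambda,\bsgamma})$ with the embedding bound, the observation $\lceil\delta+\lambda\rceil=\delta+1$, and the case split between integer and non-integer $\vartheta$. Nothing further is needed.
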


Explicit constructions of digital $(t,\alpha,\min(1, \alpha/d), d m \times m,s)$-nets over $\FF_q$ for all prime powers $q$, integers $\alpha, d, m, s > 1$ are given in Section~\ref{sec_talphacons}. By choosing $d = \alpha = \lceil\vartheta\rceil = \delta + 1$, by Theorem~\ref{th_errorbounddignet} and Corollary~\ref{cor_errorbounddignet} we obtain a convergence of $\Landau(q^{-\vartheta m} m^{s \lceil\vartheta\rceil + 1})$, which is optimal even for the smooth functions contained in the Walsh space~$\EE_{s,q,\vartheta,\bsgamma}$, see \cite{shar} where a lower bound for smooth periodic functions was shown.

\begin{remark}\rm
In \cite[Remark~4]{Dick05} it was noted that if $m = n$ and $\beta > \alpha$ the $t$-value must grow with $m$ and hence the restriction $\beta \le \alpha$ was added. A similar argument yields in our case that the $t$-value must grow with $n$ if $\beta n > \alpha m$ as Theorem~\ref{th_errorbounddignet} shows a convergence of $\Landau(q^{-\beta n + t})$ but the best possible convergence rate is $q^{-\alpha m}$, hence the restriction $\beta \le \alpha m/n$ was added.
\end{remark}

In case the smoothness of the function is not known our constructions adjust themselves automatically up to a certain degree in the following way: for the construction of the digital net we choose some value of $d \ge 1$ and construct a digital $(t,\alpha,\min(1,\alpha/d), dm \times m, s)$-net or a digital $(t,\alpha,\min(1,\alpha/d),d,s)$-sequence for all $\alpha \ge 1$. The values $\delta \ge 1$ and $0 < \lambda \le 1$ determine the real smoothness of the function, which we now assume is not known. The value of $\alpha$ is the smoothness analog for the digital net, i.e., we need to choose $\alpha = \delta + 1$. First assume that $\delta + \lambda \le d$, then $\min(1,\alpha/d) = (\delta+ 1)/d$ and therefore we have $\beta = (\delta+1)/d$. As $n = d m$ it follows that $\beta n = (\delta+1) m$ and therefore Corollary~\ref{cor_errorbounddignet} shows that we achieve a convergence of $\Landau(q^{-(\delta+\lambda) m} m^{s(\delta+1)+1})$, which is optimal. Now assume on the other hand that $\delta + \lambda > d$, then $\min(1,\alpha/d) = 1$ and therefore $\beta = 1$. Again we have $n = d m$ and hence $\beta n = d m$. In this case Corollary~\ref{cor_errorbounddignet} shows that our construction achieves a convergence of $\Landau(q^{-d m} m^{s(\delta+1)+1})$.

Note that numerical integration of functions with less smoothness, i.e., for example functions with partial mixed derivatives up to degree 1 in $\LL_{2}([0,1)^s)$ or functions with bounded variation, has been considered in many papers and monographs, see for example \cite{DKPS,DP05,DSWW1,DSWW2,KN,niesiam,SW98,sob67}. Using the notation from above, basically those results are concerned with the case where $\delta = 0$ and $\lambda = 1$, hence the results here are a direct continuation of what was previously known. The construction of digital nets proposed here for $d = 1$ yields obviously digital $(t,m,s)$-nets and $(t,s)$-sequences as for example defined in \cite{niesiam}. In view of Corollary~\ref{cor_errorbounddignet} and the explanation which followed it is hence not surprising that the classical examples and theory (see for example \cite{DKPS,DP05,hlawka,koksma,KN,niesiam,PDP,sob67}) only yielded a convergence of $\Landau(q^{m(-1+\varepsilon)})$ for any $\varepsilon > 0$ (the $\varepsilon$ here is used to hide the powers of $m$).

Note that the worst-case error in the Walsh
space~$\EE_{s,q,\vartheta,\bsgamma}$ is invariant with respect to a
digital shift (see \cite{DP05}), hence
Corollary~\ref{cor_errorbounddignet} also holds for digitally
shifted digital nets. Thus, if one wants to use randomized digital
nets, one can also use randomly digitally shifted digital nets. The
root mean square worst-case error for this case would of course be
bounded by the bound in Corollary~\ref{cor_errorbounddignet}, as
this bound holds for any digital shift, i.e., our result here is
even stronger in that we have shown that even for the worst digital
shift we still have the bound of
Corollary~\ref{cor_errorbounddignet}. From this, it follows that for
our situation here, there is, in some sense, no bad digital shift.
Other more sophisticated scrambling methods which do not destroy the
essential properties of the point set can be used as well (for
example a digital shift of depth $m$, see \cite{DP05b,matou}), see
\cite{owenscr} for some ideas in this direction.

\end{document}